%
%
\documentclass{article}

\usepackage[a4paper,top=2.54cm,bottom=2.54cm,left=3.17cm,right=3.17cm,%
            includehead,includefoot]{geometry}

\usepackage{amssymb}
\usepackage{amsmath}
\usepackage{amsfonts,amsthm}
\usepackage{graphicx}
\usepackage{subfig}
\usepackage{float}
\usepackage{tabu}
\usepackage{xcolor}
\usepackage{yhmath}
\usepackage{hyperref}
  \hypersetup{colorlinks,citecolor=blue,linkcolor=blue,breaklinks=true}
\usepackage{epstopdf}
\usepackage{lipsum}
\usepackage{commutative-diagrams}
\usepackage{tikz-cd}
\usepackage{enumitem}


\allowdisplaybreaks

\theoremstyle{plain}
\newtheorem{theorem}{Theorem}[section]
\newtheorem{lemma}[theorem]{Lemma}
\newtheorem{corollary}[theorem]{Corollary}
\newtheorem{proposition}[theorem]{Proposition}

\theoremstyle{definition}
\newtheorem{definition}[theorem]{Definition}

\newtheorem{question}{Question}
\theoremstyle{remark}
\newtheorem{remark}[theorem]{Remark}

\newcommand{\R}{\mathbb{R}}\newcommand{\C}{\mathbf{C}}

\newcommand{\bbm}{\begin{bmatrix}}
\newcommand{\ebm}{\end{bmatrix}}

%


\begin{document}
\title{\uppercase{The $K$-theory of maximal and reduced Roe algebras for Hecke pairs with equivariant coarse embeddings}}

\author{
  Liang Guo \and Hang Wang \and  Xiufeng Yao
}
\maketitle

\begin{abstract}
In this paper, we generalize the Dirac-dual-Dirac method to Hecke pairs with equivariant coarse embeddings and establish the $K$-theoretic isomorphisms between the maximal and reduced equivariant Roe algebras. We also extend these results to prove the Baum--Connes conjecture in this context.
\end{abstract}

\section{Introduction} 


The Baum-Connes conjecture plays a central role in noncommutative geometry by providing a geometric approach to computing the $K$-theory of group $C^*$-algebras. It claims that for any countable discrete group $\Gamma$ and any $\Gamma$-$C^*$--algebra $B$, the assembly map 
\begin{equation}\label{eq: assembly map}
\mu: RK_*^{\Gamma}(\underline{E}\Gamma, B)\to K_*(B\rtimes_r \Gamma)
\end{equation}
is an isomorphism. The domain of this map, known as the equivariant representable $K$-homology of the classifying space for proper actions $\underline{E}\Gamma$, encodes the topological and geometric information of the group action. Elements of this group are represented by $\Gamma$-equivariant abstract elliptic differential operators on $\underline{E}\Gamma$. The assembly map $\mu$ acts as an index map, which assigns to each such elliptic operator $D$ an element $\mu(D)$ in $K_*(B\rtimes_r \Gamma)$, called the \emph{higher index} of $D$.

However, besides the $K$-theory of the reduced crossed product, there is another very natural choice for the target of the assembly map: the \emph{maximal crossed product} $B \rtimes \Gamma$. This leads to a factorization of the assembly map in \eqref{eq: assembly map}:
$$\mu: RK_*^{\Gamma}(\underline{E}\Gamma, B)\xrightarrow{\mu_{\max}} K_*(B\rtimes \Gamma)\xrightarrow{\lambda_*} K_*(B\rtimes_r \Gamma).$$
where $\mu_{\max}$ is the \emph{maximal} assembly map and $\lambda: B\rtimes \Gamma\to B\rtimes_r\Gamma$ is the canonical quotient map. It is then natural to ask how these two choices relate to each other. In particular, under what conditions do they yield the same $K$-theory?

\begin{question}\label{question1}
Let $\Gamma$ be a countable discrete group. When does the canonical quotient map induce an isomorphism in $K$-theory $\lambda_*: K_*(B\rtimes \Gamma) \to K_*(B\rtimes_{r}\Gamma)$ for any $\Gamma$-$C^*$-algebra $B$?
\end{question}

For amenable groups, the canonical quotient map $\lambda: B\rtimes \Gamma \to B\rtimes_r\Gamma$ is an isomorphism for any $\Gamma$-$C^*$-algebra $B$, which naturally induces an isomorphism in $K$-theory. However, this is false for non-amenable groups (e.g., free groups). Remarkably, J.~Cuntz \cite{Cuntz-1983} showed that for free groups, the map $\lambda$ still induces an isomorphism in $K$-theory, despite not being an isomorphism at the level of $C^*$-algebras. Motivated by this, Cuntz introduced K-amenability for discrete groups as a sufficient condition ensuring Question \ref{question1}, i.e., $\lambda_*$ is an isomorphism. Subsequently, in \cite{Julg-Valette-1985}, P.~Julg and A.~Valette extended the notion of $K$-amenability to general locally compact groups. Inspired by this, we say that a group is \emph{quasi $K$-amenable}  if it yields a positive answer to Question \ref{question1}.

Kasparov's Dirac-dual-Dirac method is a primary tool for studying quasi $K$-amenability. Originally introduced by Kasparov \cite{Kasparov-1988} in the context of  the Novikov and the Baum-Connes conjectures, this method has been extensively developed and now stands as one of the most effective approaches to the Baum-Connes conjecture. 

A major milestone was achieved by Higson and Kasparov \cite{Higson-Kasparov-2001}, who applied this technique to establish the Baum--Connes conjecture with coefficients together with their $K$-amenability for a-T-menable groups. Building on this framework, Yu \cite{Yu-2000} made a breakthrough by adapting the machinery to the coarse geometric setting, proving the Novikov conjecture for groups coarsely embeddable into Hilbert spaces. This work significantly expanded the applicability of the method beyond the realm of amenable groups. 

Subsequent research has extended these results to broader geometric contexts. In the realm of Banach space geometry, Kasparov and Yu \cite{Kasparov-Yu-2012} generalized the coarse embedding theorem to Banach spaces with property (H).  Separately, Lafforgue \cite{Lafforgue-2002, Lafforgue-2012} introduced Banach $KK$-theory, a breakthrough that enabled the treatment of groups with property (T) which were previously inaccessible. Beyond the Banach space setting, Kasparov and Skandalis  \cite{KasparovSkandalisbolic}  verified the conjecture for groups acting on weakly bolic spaces. More recently, Brodzki, Guentner, and Higson \cite{Brodzki-Guentner-Higson-2019}  employed this method to provide an elegant geometric characterization of $K$-amenability for groups acting on CAT(0) cubical spaces, yielding a constructive proof of the Baum--Connes conjecture in this context.

The examples above illustrate a strong parallelism between the study of quasi $K$-amenability and the Baum-Connes conjecture. Within the context of the Baum-Connes conjecture, a natural and fruitful perspective is to investigate its stability under group extensions.
Consider an extension $1 \to N \to G \to G/N \to 1$ of countable discrete groups. H.~Oyono-Oyono \cite{Oyono-2001} showed that if $G/N$ and every subgroup of $G$ containing $N$ with finite index satisfy the Baum-Connes conjecture with coefficients, then so does $G$. While such stability results typically rely on the assembly map being an isomorphism, and generally fail for mere injectivity (the Novikov conjecture), J.~Deng \cite{MR4422220} proved a result in the coarse setting: he proved that if both $N$ and $G/N$ admit coarse embeddings into a Hilbert space, then $G$ satisfies the strong Novikov conjecture. In the coarse setting, J.~Deng and L.~Guo \cite{deng-guo-2025} introduced twisted Roe algebras as coarse counterparts of crossed products. They used this construction to prove a similar stability result. Regarding classical $K$-amenability, Cuntz \cite{Cuntz-1983} showed that this property is preserved under direct products, free products, subgroups, and extensions by amenable normal subgroups. 

Inspired by these results, in this paper, we investigate the stability of quasi $K$-amenability under group extensions. In fact, we consider a more general setting. Recently, C.~Dell'Aiera \cite{Clement-2023} generalized the result of Deng \cite{MR4422220} to the context of Hecke pairs. Recall that a pair $(\Gamma, \Lambda)$ is called a \emph{Hecke pair} if $\Lambda$ is an almost normal subgroup of $\Gamma$ (see Definition \ref{defheckepair}). C.~Dell'Aiera investigated the relationship between the coarse geometry of a Hecke pair and the analytic property of its Schlichting completion. By adapting the techniques of H.~Oyono-Oyono, he proved that for a Hecke pair $(\Gamma, \Lambda)$, if both $\Lambda$ and the quotient space $X = \Gamma/\Lambda$ admit coarse embeddings into a Hilbert space, then the Novikov conjecture holds for $\Gamma$.

In this paper, we investigate the stability of quasi $K$-amenability for Hecke pairs. Our main theorem is stated as follows:

\begin{theorem}\label{A}
Let $(\Gamma, \Lambda)$ be a Hecke pair of discrete groups and let $X = \Gamma/\Lambda$. If $X$ admits a $\Gamma$-equivariant coarse embedding into a Hilbert space and $\Lambda$ is a-T-menable, then $\Gamma$ is quasi $K$-amenable, i.e., for any $\Gamma$-$C^*$-algebra $B$, the canonical quotient map induces an isomorphism in $K$-theory:
\[
K_*(B \rtimes \Gamma) \cong K_*(B \rtimes_r \Gamma).
\]

In particular, assume that $\Lambda\trianglelefteq\Gamma$ is a normal subgroup. If $\Lambda$ and $\Gamma/\Lambda$ are both a-T-menable, then $\Gamma$ is quasi $K$-amenable.
\end{theorem}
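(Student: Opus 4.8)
The plan is to carry out a Dirac--dual-Dirac argument in the spirit of Higson--Kasparov and Cuntz, but performed \emph{fibrewise} over $X=\Gamma/\Lambda$ and localised at the level of Roe algebras, so that the outcome is an honest $KK$-equivalence between the maximal and reduced equivariant Roe algebras which then descends to the crossed products. The first step is to pass to the Schlichting completion $G=\overline{\Gamma}$, in which $\overline{\Lambda}$ is a compact open subgroup and $X=G/\overline{\Lambda}$ is a proper, cocompact $G$-coarse space, and to set up the maximal and reduced equivariant Roe algebras $C^{*}_{\max}(P_{d}(X),B)^{\Gamma}$ and $C^{*}_{r}(P_{d}(X),B)^{\Gamma}$ of the Rips complexes $P_{d}(X)$, together with the identification --- one of the preliminary steps I would carry out --- of their inductive-limit $K$-theory (as $d\to\infty$) with $K_{*}(B\rtimes\Gamma)$, resp. $K_{*}(B\rtimes_{r}\Gamma)$. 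The advantage of the \emph{maximal} Roe algebra is that it is functorial for the relevant completions and carries the Dirac and dual-Dirac elements without any exactness assumption on $\Gamma$, so it suffices to prove that the canonical quotient map between the two Roe algebras is a $K$-theory isomorphism, uniformly in $d$ and $B$.

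The heart of the argument is the construction of the geometric data feeding the method. The $\Gamma$-equivariant coarse embedding $f\colon X\to\mathcal H_{0}$, after the usual Higson--Kasparov--Trout renormalisation, produces an infinite-dimensional Clifford--Bott algebra over the ``base'' $X$, while the a-T-menability of $\Lambda$ supplies a proper affine isometric action of $\Lambda$ on a Hilbert space $\mathcal H_{1}$, hence a \emph{proper} ``fibre'' Bott algebra, which via the coset bundle $\Gamma\times_{\Lambda}\mathcal H_{1}$ over $X$ becomes $\Gamma$-equivariant. I would assemble these into a single $\mathbb Z/2$-graded $\Gamma$-equivariant $C^{*}$-algebra $\mathcal A$ --- roughly $\mathcal S=C_{0}(\mathbb R)$ tensored with the Clifford algebra bundle of $(\mathcal H_{0}\times X)\oplus(\Gamma\times_{\Lambda}\mathcal H_{1})$ over $X$ --- arranged so that the associated fibrewise length function is proper. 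The point is that $\mathcal A$ is then a proper algebra for the transformation groupoid $X\rtimes\Gamma$, so that the maximal and reduced completions coincide for coefficients in $\mathcal A\otimes B$, both at the crossed-product level and after the Roe-algebra localisation; one then has a Dirac element $\alpha\in KK_{\Gamma}(\mathcal A,\mathbb C)$ and a dual-Dirac (Bott) element $\beta\in KK_{\Gamma}(\mathbb C,\mathcal A)$, and the infinite-dimensional Bott periodicity of Higson--Kasparov--Trout, applied fibrewise over $X$ and combined with Bott periodicity for the proper $\Lambda$-action, yields $\beta\otimes_{\mathcal A}\alpha=1$ in the relevant equivariant $KK$-group.

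The conclusion is then formal. Descent of $\alpha$ and $\beta$ along the maximal and reduced completions, together with the coincidence of maximal and reduced crossed products with $\mathcal A$-coefficients, gives a commuting diagram in which $K_{*}(B\rtimes\Gamma)$ and $K_{*}(B\rtimes_{r}\Gamma)$ are both retracts of the \emph{same} intermediate $K$-group via maps intertwined by $\lambda_{*}$; the identity $\beta\otimes\alpha=1$ exhibits these retractions together with their one-sided inverses, and a short diagram chase then forces $\lambda_{*}$ to be simultaneously injective and surjective. For the ``in particular'' statement, suppose $\Lambda\trianglelefteq\Gamma$ is normal and $\Gamma/\Lambda$ is a-T-menable, and pick a proper affine isometric action of $\Gamma/\Lambda$ on a Hilbert space $\mathcal H$ with cocycle $b$; then $\gamma\Lambda\mapsto b(\gamma\Lambda)$ is $\Gamma$-equivariant for the affine action pulled back along $\Gamma\to\Gamma/\Lambda$, and properness of the action makes it a coarse embedding of $X=\Gamma/\Lambda$ into $\mathcal H$, so the hypotheses of the first part hold and the claim follows.

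The step I expect to be the main obstacle is the gluing: the a-T-menability of $\Lambda$ only provides a $\Lambda$-equivariant affine action, whereas the machine needs a $\Gamma$-equivariant continuous field over $X$ with \emph{uniformly} proper fibrewise length functions, and this must be built from the cocycle underlying $f$ while staying compatible with the propagation control of the Roe algebras and with the $\overline{\Lambda}$-compactness in the Schlichting completion. A related technical point is checking that the combined algebra $\mathcal A$ remains proper \emph{after} the Roe-algebra localisation, so that the maximal and reduced crossed products genuinely agree there; this is exactly where one needs $X$ to admit a bona fide \emph{equivariant} coarse embedding, not merely a fibred one.
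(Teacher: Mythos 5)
Your overall Dirac--dual-Dirac architecture and the final diagram chase (two retracts of a common intermediate $K$-group intertwined by $\lambda_*$, with $\beta\otimes\alpha=1$ forcing $\lambda_*$ to be an isomorphism) match the paper, as does your derivation of the ``in particular'' statement by pulling back the cocycle of a proper action of $\Gamma/\Lambda$. But the step you yourself flag as ``the main obstacle'' is a genuine gap, and it is exactly the point where the paper does something different. You propose to glue the base embedding $f\colon X\to\mathcal H_0$ with the proper affine $\Lambda$-action on $\mathcal H_1$ into a single algebra $\mathcal A$ built from the Clifford bundle of $(\mathcal H_0\times X)\oplus(\Gamma\times_\Lambda\mathcal H_1)$, and then assert that $\mathcal A$ is proper so that the maximal and reduced completions coincide and that $\beta\otimes_{\mathcal A}\alpha=1$ follows from Higson--Kasparov--Trout ``applied fibrewise.'' Neither assertion is justified: the HKT Bott periodicity theorem is stated for a single affine Hilbert space, not for a bundle of affine Hilbert spaces over a base $X$ on which $\Gamma$ acts with infinite stabilizers, and flattening the bundle $\Gamma\times_\Lambda\mathcal H_1$ into a genuine $\Gamma$-equivariant structure with uniformly proper fibrewise length functions is obstructed in general --- the pair $(SL(2,\mathbb Z)\ltimes\mathbb Z^2,\ \mathbb Z^2)$ has relative property (T), so no single affine isometric $\Gamma$-action can be proper in restriction to $\Lambda$, which is why extensions of a-T-menable groups need not be a-T-menable. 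Your proposal records the difficulty but does not resolve it; also, the detour through the Schlichting completion is unnecessary (the paper explicitly avoids it, replacing it by the elementary observation that stabilizers of vectors for the induced affine action are commensurable with $\Lambda$).

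The paper's resolution is to \emph{not} combine the two Hilbert spaces. It keeps only the Bott algebra $\mathcal A(\mathcal H)$ of the base embedding, accepts that this algebra is not proper (so the maximal and reduced twisted Roe algebras genuinely differ), and proves directly that the quotient map between them is a $K$-theory isomorphism by a cutting-and-pasting argument: the support set $O_r$ is covered by finitely many $\Gamma$-sets $\Gamma\cdot V_i\cong\Gamma\times_{N_i}V_i$ whose isotropy groups $N_i$ are commensurable with $\Lambda$ (this is the role of the equivariant coarse embedding, via the stabilizer computation), Mayer--Vietoris and Green's imprimitivity reduce the problem to $K_*(A_{V_i}\rtimes_{\max}N_i)\to K_*(A_{V_i}\rtimes_r N_i)$, and the a-T-menability (hence $K$-amenability, by Higson--Kasparov) of the commensurable subgroups $N_i$ finishes the local step. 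A further ``sandwich'' lemma is needed because maximal norms are not hereditary under the inclusions $O_r\subset O_{r'}$ and under restriction to $\Gamma\cdot V_i$, so the intermediate completions must be shown to have the same $K$-theory as both the maximal and reduced ones. If you want to repair your proposal, you should either import this localization-to-stabilizers argument in place of the properness claim for $\mathcal A$, or prove a groupoid version of HKT Bott periodicity for your bundle algebra; as written, the conclusion ``maximal and reduced completions coincide for coefficients in $\mathcal A\otimes B$'' is precisely what the theorem is secretly equivalent to, so assuming it is circular.
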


The class of a-T-menable groups is known to be closed under extensions by amenable \emph{quotients}. However, as noted in \cite[Corollary 1.4.13]{BHV-2008}, this stability does not hold for extensions by amenable \emph{kernels}. A standard counterexample is the group pair $(SL(2, \mathbb{Z}) \ltimes \mathbb{Z}^2, \mathbb{Z}^2)$, which has relative Property (T) and thus fails to be a-T-menable.
We observe that while Property (T) is an obstruction to quasi $K$-amenability due to the existence of the Kazhdan projection  (as shown by Julg and Valette in \cite[Corollary 3.7]{Julg-Valette-1985}). In this context, Theorem \ref{A} implies that extensions of a-T-menable groups are quasi $K$-amenable, even in cases where the group fails to be a-T-menable due to relative Property (T).

Although both this work and that of Dell'Aiera \cite{Clement-2023} investigate higher index theory for Hecke pairs, our methods are fundamentally different. Dell'Aiera follows the strategy of Oyono-Oyono \cite{Oyono-2001}, employing the Schlichting completion to study the Baum-Connes conjecture. In contrast, our approach relies on the Dirac-dual-Dirac method. A key feature of our approach is a geometric ``cutting-and-pasting'' argument that reduces the global problem for $\Gamma$ to subgroups \emph{commensurable} with $\Lambda$, see Definition \ref{defheckepair} and Corollary \ref{keycoro}.

Once the problem is reduced to subgroups commensurable with $\Lambda$, Theorem~\ref{A} can be proved easily.  Recall that for any subgroup $G \leq \Gamma$ commensurable with $\Lambda$, the intersection $H = G \cap \Lambda$ has finite index in both $G$ and $\Lambda$. Since the a-T-menability is preserved under taking subgroups and finite extensions, it is invariant under commensurability. Therefore, if $\Lambda$ is a-T-menable, then $G$ is also a-T-menable. By the celebrated result of Higson and Kasparov \cite{Higson-Kasparov-2001}, a-T-menability of $G$ implies its $K$-amenability (and hence quasi $K$-amenability). This concludes the proof.

The Dirac-dual-Dirac framework employed in the proof of Theorem \ref{A} can be further adapted to address (coarse) Baum-Connes and Novikov conjectures for the group $\Gamma$. By incorporating different geometric assumptions on $\Lambda$ into this common thread, we obtain the following corollaries for Hecke pairs:

\begin{corollary}\label{B}
Let $(\Gamma, \Lambda)$ be a Hecke pair of discrete groups and $X = \Gamma/\Lambda$ the quotient space.\begin{itemize}
\item (Theorem \ref{BCC}) If $\Lambda$ is a-T-menable and $X$ admits a $\Gamma$-equivariant coarse embedding into Hilbert space, then $\Gamma$ satisfies the Baum--Connes conjecture with coefficients.
\item (Theorem \ref{strongNovikov}) If $\Lambda$ admits a coarse embedding into Hilbert space and $X$ admits a $\Gamma$-equivariant coarse embedding into Hilbert space, then $\Gamma$ satisfies the strong Novikov conjecture with coefficients.
\item (Theorem \ref{coarseBCC}) If both $\Lambda$ and $\Gamma$ admit coarse embeddings into Hilbert space, then $\Gamma$ satisfies the coarse Baum--Connes conjecture and consequently the Novikov conjecture.
\end{itemize}\end{corollary}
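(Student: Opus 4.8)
All three assertions come out of a single machine, so the plan is to build that machine once and then feed in the three different geometric hypotheses on $\Lambda$.

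The first step is to make precise the Dirac--dual-Dirac package carried by a $\Gamma$-equivariant coarse embedding $f\colon X\hookrightarrow H$ of $X=\Gamma/\Lambda$ into a Hilbert space. Imitating Yu's coarse Bott construction, $f$ yields a $\Gamma$-$X$-algebra $\mathcal{A}(H)$ assembled from the Clifford algebras of the finite-dimensional affine subspaces of $H$ (with the affine isometric $\Gamma$-action inherited from equivariance of $f$), together with a Bott morphism $\beta$ and a Dirac morphism $\alpha$ relating $\mathcal{A}(H)$ with coefficients in $B$ to the maximal and reduced equivariant Roe algebras of $X$. Their composition $\gamma_X=\beta\otimes_{\mathcal{A}(H)}\alpha$ is a ``$\gamma$-element in the $X$-direction.'' The real content of the setup is the cutting-and-pasting argument (Corollary \ref{keycoro}): decomposing $X$ along a $\Gamma$-invariant cover whose pieces are controlled by subgroups $G\le\Gamma$ commensurable with $\Lambda$ --- which is exactly where the Hecke (almost-normality) hypothesis enters --- one shows that, modulo such $G$, $\gamma_X=1$, and crucially that this identity is witnessed by a homotopy through \emph{maximal-norm} data, hence compatible with the quotient $\lambda\colon B\rtimes\Gamma\to B\rtimes_r\Gamma$.

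With this in place, each part is closed by supplying the remaining ingredient for the commensurable subgroups. For the first assertion, any $G$ commensurable with $\Lambda$ has $G\cap\Lambda$ of finite index in both $G$ and $\Lambda$, so $G$ is a-T-menable (a-T-menability being stable under subgroups and finite extensions); by Higson--Kasparov \cite{Higson-Kasparov-2001} such $G$ satisfy the Baum--Connes conjecture with arbitrary coefficients and are $K$-amenable, and feeding this back into the $\gamma_X$-factorization yields that $\mu_\Gamma\colon RK^\Gamma_*(\underline{E}\Gamma,B)\to K_*(B\rtimes_r\Gamma)$ is an isomorphism (Theorem \ref{BCC}). For the second, replace a-T-menability by coarse embeddability of $\Lambda$, which is a quasi-isometry invariant and so is shared by every $G$ commensurable with $\Lambda$; by Yu's theorem together with the descent argument (as in Skandalis--Tu--Yu, or via the twisted Roe algebras of Deng--Guo \cite{deng-guo-2025}) these $G$ have \emph{injective} assembly maps with coefficients, and since the $\gamma_X$-factorization need only transport injectivity, $\mu_\Gamma$ is injective, i.e.\ $\Gamma$ satisfies the strong Novikov conjecture with coefficients (Theorem \ref{strongNovikov}). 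For the third, one runs the same scheme at the level of the coarse Baum--Connes assembly map for the metric space $\Gamma$, using the coarse embeddings of $\Gamma$ and of $\Lambda$ (the latter to control the blocks along the fibres of $\Gamma\to X$); the Novikov conjecture for $\Gamma$ then follows by the usual descent principle (Theorem \ref{coarseBCC}).

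The step I expect to be the main obstacle is the cutting-and-pasting reduction itself, Corollary \ref{keycoro}. The difficulty is twofold. Geometrically, $X=\Gamma/\Lambda$ is only a homogeneous coarse space rather than a group, so $\alpha$ and $\beta$ live not in ordinary equivariant $KK$-theory but in a $KK$-category of $\Gamma$-$X$-algebras / equivariant Roe algebras, and one must cut $\gamma_X$ apart along pieces of $X$ each supported near a $\Gamma$-translate of the base point, whose internal symmetry is a subgroup commensurable with $\Lambda$. Analytically, because the conclusion of the first assertion (and of Theorem \ref{A}) is \emph{quasi} $K$-amenability, one cannot simply invert $\gamma_X$ after passing to the reduced completion; every homotopy produced along the way must be implemented by completely positive, maximal-norm-continuous families so that it survives $\lambda$. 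Keeping the whole construction ``$K$-amenable'' --- rather than merely verifying ``$\gamma=1$ in the reduced category'' --- is, I expect, the technical heart of the argument.
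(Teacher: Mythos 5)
Your proposal gets the first bullet essentially right: the cutting-and-pasting reduction to subgroups commensurable with $\Lambda$, together with a-T-menability being a commensurability invariant and Higson--Kasparov, is exactly how the paper proves Theorem~\ref{BCC}. (Note, though, that the paper's proof of this bullet is carried out at the level of localization algebras over the Rips complex $P_d(\Gamma)$, with the identity $\alpha_*\circ\beta_*=\mathrm{id}$ on the left column of a $3\times 2$ diagram; the maximal-vs-reduced machinery you emphasize belongs to Theorem~\ref{A} rather than to the Baum--Connes statement, so the ``keep everything in maximal-norm data'' concern you flag is not actually the pressure point here.)

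The second bullet is where your proposal has a genuine gap. You propose to feed coarse embeddability of each commensurable $N_i$ into the \emph{same} twisted Roe-algebra machine and then ``transport injectivity through the $\gamma_X$-factorization.'' This breaks for two reasons. First, the middle row of the dual-Dirac diagram is the twisted assembly map $\mu_{\mathcal{A}}$, and after cutting along the cover $\{\Gamma\cdot V_i\}$ the local models are the assembly maps for $N_i$ with coefficients in $\mathcal{A}(\mathcal{H})_{V_i}\hat\otimes B$, where $N_i$ is \emph{infinite} and does \emph{not} act properly on $\mathcal{A}(\mathcal{H})_{V_i}$; coarse embeddability of $N_i$ gives you strong Novikov for $N_i$, i.e.\ injectivity of its assembly maps, but injectivity is not an invariant you can propagate through the Mayer--Vietoris ladder (the Four Lemma for monomorphisms requires surjectivity one step to the left, which you do not have). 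Second, even granting injectivity of the middle row, you would still need the left-hand Bott map on localization algebras to be an isomorphism --- and in the $P_d(\Gamma)$ picture this hinges on reducing to \emph{finite} stabilizers $F$ and using that the coefficient space is $F$-contractible, which is false for $\mathcal{A}(\mathcal{H})$ alone. The paper resolves both problems simultaneously by constructing, \`a la Deng~\cite{MR4422220}, a $\Gamma$-space $W$ (a product of probability-measure compactifications built from coarse embeddings of each $N$ in the commensurability class) with two properties: $W\rtimes N$ is an a-T-menable \emph{groupoid} for all $N$ commensurable with $\Lambda$ (so Tu's theorem makes the twisted assembly map with coefficients $C(W)\hat\otimes\mathcal{A}(\mathcal{H})\hat\otimes B$ an \emph{isomorphism}, Lemma~\ref{isomorphism with W}), and $W$ is $F$-contractible for all finite $F\le\Gamma$ (Lemmas~\ref{localization iso} and~\ref{C(W) iso}). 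Your sketch never introduces $W$, and without it the argument does not close.

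For the third bullet you suggest ``running the same scheme'' and invoking descent, but the paper does not use the dual-Dirac machinery here at all: it observes that $\pi\colon\Gamma\to X$ is a coarse $\Lambda$-fibration (fibers uniformly coarsely equivalent to $\Lambda$, preimages of balls uniformly coarsely equivalent to $\Lambda$), and then applies the CE-by-CE coarse fibration theorem of Deng--Guo~\cite{deng-guo-2025} as a black box. (Be aware also that the hypothesis in the body of Theorem~\ref{coarseBCC} is that $\Lambda$ \emph{and $X$} are coarsely embeddable, not $\Lambda$ and $\Gamma$ as the Corollary's phrasing suggests; your sketch reads the latter.) Your route is not wrong in spirit, but it is a substantially different and much less economical plan, and you would still have to produce the CE-by-CE ingredient yourself rather than obtaining it from the fibration structure.
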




The paper is organized as follows. Section 2 introduces Hecke pairs and their coarse geometric properties, and establishes the commensurability condition. Section 3 develops the necessary algebraic framework, defining the Roe algebras and twisted Roe algebras under both reduced and maximal norms. Section 4 presents the proof of the main theorem by adapting Yu's Dirac-dual-Dirac method to the setting of Hecke pairs. Section 5 applies Theorem \ref{A} to verify the Baum--Connes and Novikov conjectures for Hecke pairs, yielding Corollaries \ref{B}. 

\section{Preliminaries on Hecke Pairs}

In this section, we recall the definition and coarse geometric properties of Hecke pairs. The key observation is that when the quotient space $X$ of a Hecke pair $(\Gamma,\Lambda)$ is $\Gamma$-equivariantly coarsely embeddable into Hilbert space, one can construct a $\Gamma$-equivariant coarse embedding $h:X \to \mathcal{H}$ such that for every $v\in \mathcal{H}$, the stabilizer $\Gamma_{v}$ is commensurable with $\Lambda$.

Throughout this paper, we always assume that $\Gamma$ is a countable discrete group with a proper length function $| \cdot |$. We begin with the definition of Hecke pairs.
\begin{definition}\label{defheckepair}
  Let $\Lambda$ be a subgroup of a group $\Gamma$. We say that $\Lambda$ is almost normal in $\Gamma$ if one of the following equivalent conditions holds:
  \begin{itemize}
    \item For any $\gamma \in \Gamma$, the subgroups $\Lambda$ and $\Lambda^{\gamma}:=\gamma \Lambda \gamma^{-1}$ are commensurable. That is, the indices $\left[ \Lambda: \Lambda \cap \gamma\Lambda\gamma^{-1} \right] $ and $\left[ \gamma\Lambda\gamma^{-1}: \Lambda \cap \gamma\Lambda\gamma^{-1} \right] $ are finite.
    \item The left action of $\Lambda$ on the coset space $\Gamma /\Lambda$ has finite orbits, i.e., every double coset $\Lambda s \Lambda$ is a finite union of left cosets $\gamma\Lambda$.
  \end{itemize}
  In this case, the pair $(\Gamma,\Lambda)$ is called a Hecke pair.
\end{definition}

Here, we provide a proof of the equivalence between the two conditions in Definition \ref{defheckepair}.

\begin{proof}[Proof of Equivalence] 
  First, assume that for every $\gamma \in \Gamma$, $\Lambda$ and $\gamma \Lambda \gamma^{-1}$ are commensurable. We will show that the left action of $\Lambda$ on $\Gamma / \Lambda$ has finite orbits.
  
  Fix an arbitrary $s \in \Lambda$ and consider the double coset $\Lambda s \Lambda$, which can be expressed as a union of left cosets: 
  $$\Lambda s \Lambda=\bigcup_{\lambda \in  \Lambda}\lambda s \Lambda .$$ 
  However, many of these cosets coincide. Specifically, for $\lambda_{1},\lambda_{2} \in \Lambda$, we have $\lambda_{1}s\Lambda=\lambda_{2}s\Lambda$ if and only if $  \lambda_{2}^{-1}\lambda_{1}\in \Lambda\cap s\Lambda s^{-1}$. This means that $\lambda_{1}$ and $\lambda_{2}$ are in the same coset of $\Lambda$ modulo the subgroup $\Lambda \cap s \Lambda s^{-1}$.
 
  By assumption, $\Lambda \cap s \Lambda s ^{-1}$ has finite index in $\Lambda$. So, $\Lambda$ can be decomposed into finitely many cosets:$$
  \Lambda=\bigcup_{i=1}^{n} \lambda_{i}(\Lambda \cap s \Lambda s ^{-1}),$$ 
  where $\lambda_1, \dots, \lambda_n$ are representatives of the cosets. Since $\lambda s \Lambda = \lambda' s \Lambda$ whenever $\lambda$ and $\lambda'$ are in the same coset, it follows that 
  $$\Lambda s \Lambda =\bigcup_{i=1}^{n} \lambda_{i}(\Lambda \cap s \Lambda s ^{-1}) (s\Lambda)= \bigcup_{i=1}^{n} \lambda_{i}s \Lambda . $$
  Therefore, $\Lambda s \Lambda$ is a finite union of left cosets, which shows that the orbit of $s \Lambda$ under the left action of $\Lambda$ is finite.

  Conversely, assume that the left action of $\Lambda$ on $\Gamma / \Lambda$ has finite orbits. We will show that for every $s \in \Gamma$, $\Lambda$ and $s \Lambda s^{-1}$ are commensurable.
  
  For any $s \in \Gamma$, the double coset can be written as
  $$\Lambda s \Lambda=\bigcup_{i=1}^{n}\gamma_{i}\Lambda.$$
  As before, for $\lambda_{1} s \Lambda=\lambda_{2} s \Lambda$, we have $\lambda_{1}$ and $\lambda_{2}$ in the same coset of $\Lambda$ modulo $\Lambda \cap s \Lambda s ^{-1}$. This implies that $\Lambda \cap s \Lambda s ^{-1}$ is the stabilizer of $s\Lambda$ under the left action of $\Lambda$. Thus, the number of distinct cosets in $\Lambda s \Lambda$ is equal to the index:
  $$|\Lambda s \Lambda / \Lambda |=| \Lambda / \Lambda\cap s \Lambda s ^{-1}|.$$
  Since $\Lambda s \Lambda / \Lambda$ is finite by assumption, the index $[\Lambda : \Lambda \cap s \Lambda s^{-1}]$ is finite. By symmetry, $[s \Lambda s^{-1} : \Lambda \cap s \Lambda s^{-1}]$ is also finite. Hence, $\Lambda$ and $s \Lambda s^{-1}$ are commensurable.
\end{proof}

The quotient space $X=\Gamma /\Lambda$ carries a natural $\Gamma$-invariant metric defined by 
\begin{equation}\label{X-metric}
  d(s\Lambda,t\Lambda)=\inf_{\lambda,\lambda' \in \Lambda} |\lambda s^{-1} t \lambda' |.
\end{equation}
With respect to this metric, we denote by $B(x,r)$ the open ball of radius $r$ centered at a point $x \in X$. We say that the metric space $(X,d)$ has bounded geometry if for every $r>0$, the cardinality $|B(x,r)|$ is uniformly finite, i.e.,
$$\sup_{x \in X} |B(x,r)|<\infty.$$
It turns out $X$ has bounded geometry by the following proposition. \cite[Proposition 2.1]{Clement-2023}.

\begin{proposition}[{\cite[Proposition 2.1]{Clement-2023}}]\label{boundedgeometry}
The quotient space $X= \Gamma /\Lambda$ is of bounded geometry with the canonical quotient metric if and only if $(\Gamma, \Lambda)$ is a Hecke pair.\qed
\end{proposition}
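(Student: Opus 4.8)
The plan is to reduce the statement to the finiteness of a single ball at the base point and then recognize the Hecke condition as essentially a restatement of that finiteness. First I would observe that $\Gamma$ acts transitively on $X=\Gamma/\Lambda$ by isometries for the metric \eqref{X-metric}, so $|B(x,r)|$ is independent of $x$; hence $(X,d)$ has bounded geometry precisely when the ball $B(x_0,r)$ about the base point $x_0=e\Lambda$ is finite for every $r>0$. Next I would make explicit the dictionary between balls in $X$ and length balls in $\Gamma$. Since $|\cdot|$ is proper, the set $S_r=\{g\in\Gamma:|g|<r\}$ is finite for each $r$; and unwinding the definition, $d(x_0,t\Lambda)=\inf_{\lambda,\lambda'\in\Lambda}|\lambda t\lambda'|$, so $t\Lambda\in B(x_0,r)$ if and only if the double coset $\Lambda t\Lambda$ meets $S_r$. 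In particular $d(x_0,\cdot)$ is constant on $\Lambda$-orbits in $X$, so $B(x_0,r)$ is a union of such orbits, each being the set of left cosets contained in a single double coset $\Lambda g\Lambda$ with $g\in S_r$, and one obtains the inclusion
\[
B(x_0,r)\ \subseteq\ \bigcup_{g\in S_r}\bigl\{\,t\Lambda:\ t\Lambda\subseteq\Lambda g\Lambda\,\bigr\}.
\]

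For the direction ``Hecke pair $\Rightarrow$ bounded geometry'', I would fix $r>0$ and invoke the second characterization in Definition \ref{defheckepair}: every double coset $\Lambda g\Lambda$ is a finite union of left cosets $\gamma\Lambda$. Then the right-hand side of the displayed inclusion is a finite union (indexed by the finite set $S_r$) of finite sets, so $B(x_0,r)$ is finite; by the reduction in the first step, $(X,d)$ has bounded geometry.

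For the converse, I would take an arbitrary $s\in\Gamma$ and put $r=|s|+1$, so that $s\in S_r$. Any left coset $t\Lambda$ contained in $\Lambda s\Lambda$ can be written $t=\lambda s\lambda'$ with $\lambda,\lambda'\in\Lambda$, whence $d(x_0,t\Lambda)\le|\lambda^{-1}t(\lambda')^{-1}|=|s|<r$, i.e.\ $t\Lambda\in B(x_0,r)$. Thus the set of left cosets comprising $\Lambda s\Lambda$ embeds into the finite set $B(x_0,r)$, so $\Lambda s\Lambda$ is a finite union of left cosets. As $s$ was arbitrary, $(\Gamma,\Lambda)$ is a Hecke pair.

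A word on what is and is not delicate: there is no genuinely hard step here. The entire content sits in the first paragraph — correctly translating a ball in $X$ into the statement that a double coset meets a length ball in $\Gamma$, together with the observation that $d(x_0,t\Lambda)$ depends only on $\Lambda t\Lambda$. The rest is a finite union of finite sets in each direction. The one point needing minor care is the bookkeeping with open versus closed balls for the integer-valued proper length function (hence the choice $r=|s|+1$ above), which is purely cosmetic.
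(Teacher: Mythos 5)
Your proof is correct and is the standard argument: reduce by transitivity of the isometric $\Gamma$-action to finiteness of one ball $B(e\Lambda,r)$, then translate the condition $d(e\Lambda,t\Lambda)<r$ into the statement that $\Lambda t\Lambda$ meets a finite length ball in $\Gamma$, so that each direction becomes a finite-union-of-finite-sets observation. The paper itself does not reproduce a proof (it cites Dell'Aiera's Proposition~2.1 and closes the statement with \qed), so your self-contained argument fills that gap cleanly and there is nothing in the text to compare against beyond the metric formula \eqref{X-metric}, which you use exactly as intended.
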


Now we introduce the definition of equivariant coarse embedding.
\begin{definition}
    A $\Gamma$-space $X$ admits a $\Gamma$-equivariant coarse embedding $h:X \to \mathcal{H}$ into the Hilbert space $\mathcal{H}$ if:
    \begin{itemize}
        \item there is an affine isometric action of $\Gamma$ on $\mathcal{H}$ such that 
        $$
        \gamma\cdot h(x) = h(\gamma\cdot x), \quad \text{for all } \gamma \in \Gamma, x\in X;
        $$
        \item there exists $\rho_{-},\rho_{+}: [0,+\infty) \to [0,+\infty)$ satisfying $\lim\limits_{r \to +\infty}\rho_{-}(r) = +\infty$ such that
        $$
        \rho_{-}(d(x,y)) \le \|h(x)-h(y)\| \le \rho_{+}(d(x,y)).
        $$
    \end{itemize}
\end{definition}

Recall that for a general Hecke pair $(\Gamma, \Lambda)$, Tzanev \cite{MR2015025} constructed an essentially unique new Hecke pair called the Schlichting completion. This construction and its generalizations \cite{MR2465930} provide a standard tool for studying Hecke pairs. In this paper, we adopt a different strategy. Instead of using the completion, we rely on the following key geometric property of the equivariant coarse embedding.

\begin{proposition}\label{Keyprop}
  Given a Hecke pair $(\Gamma,\Lambda)$, if the quotient space $X=\Gamma /\Lambda$ admits a $\Gamma$-equivariant coarse embedding into Hilbert space $\mathcal H$, then for any $v \in \mathcal{H}$ and any $R>0$, the subset 
  $$
  N_{v,R}= \{ \gamma \in \Gamma \mid B(v,R) \cap \gamma\cdot B(v,R) \neq \emptyset \} \subseteq \Gamma
  $$
  has finite orbit on $e\Lambda$ in $X$, i.e., $N_{v,R}\cdot \Lambda$ is finite in $X$, where $e\Lambda$ is the representative of the group identity $e$ in $\Gamma /\Lambda$.
\end{proposition}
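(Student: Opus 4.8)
The plan is to convert membership in $N_{v,R}$ into a uniform bound on the displacement $\|v-\gamma\cdot v\|$ in $\mathcal H$, transport that bound through the equivariant coarse embedding $h$ to a bound on $d(e\Lambda,\gamma\cdot e\Lambda)$ in $X$, and then finish with the bounded geometry of $X$ supplied by Proposition~\ref{boundedgeometry}. Note at the outset that $B(v,R)$ denotes the open ball in $\mathcal H$, and that the $\Gamma$-action on $\mathcal H$ is by affine isometries, so each $\gamma$ carries $B(v,R)$ onto $B(\gamma\cdot v,R)$; consequently $B(v,R)\cap\gamma\cdot B(v,R)\neq\emptyset$ forces $\|v-\gamma\cdot v\|<2R$ (if $w$ lies in both balls, apply the triangle inequality). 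Thus $N_{v,R}\subseteq\{\gamma\in\Gamma:\|v-\gamma\cdot v\|<2R\}$.

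Next I would set $x_0=e\Lambda$ and $v_0=h(x_0)\in\mathcal H$. For $\gamma\in N_{v,R}$, the equivariance $h(\gamma\cdot x_0)=\gamma\cdot h(x_0)=\gamma\cdot v_0$, the isometry of the $\Gamma$-action, and the triangle inequality give
$$
\|h(x_0)-h(\gamma\cdot x_0)\|=\|v_0-\gamma\cdot v_0\|\le\|v_0-v\|+\|v-\gamma\cdot v\|+\|\gamma\cdot v-\gamma\cdot v_0\|<2\|v_0-v\|+2R.
$$
Write $S:=2\|v_0-v\|+2R$, a constant depending only on $v$, $v_0$ and $R$ but not on $\gamma$. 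The lower estimate in the definition of a coarse embedding then yields $\rho_-\big(d(x_0,\gamma\cdot x_0)\big)\le\|h(x_0)-h(\gamma\cdot x_0)\|<S$, and since $\rho_-(r)\to+\infty$ as $r\to+\infty$ there is $R'>0$ with $\rho_-(r)\ge S$ for all $r\ge R'$. Hence $d(x_0,\gamma\cdot x_0)<R'$ for every $\gamma\in N_{v,R}$, i.e. $N_{v,R}\cdot\Lambda=\{\gamma\cdot x_0:\gamma\in N_{v,R}\}\subseteq B(x_0,R')$. By Proposition~\ref{boundedgeometry} the space $X$ has bounded geometry, so $|B(x_0,R')|<\infty$, and therefore $N_{v,R}\cdot\Lambda$ is finite, as claimed.

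The argument is essentially a chase through the triangle inequality, so I do not expect a serious obstacle; the only points needing care are the bookkeeping that an affine isometry sends balls to balls of the same radius, and the observation that the constant $S$ is genuinely independent of $\gamma$ (which is where isometry of the action is used, to kill the term $\|\gamma\cdot v-\gamma\cdot v_0\|=\|v-v_0\|$). It is worth recording that the upper control function $\rho_+$ plays no role here — only properness of $h$ (the divergence of $\rho_-$), its equivariance, and bounded geometry of $X$ are used. The real weight of this proposition lies downstream: it is the geometric mechanism behind the later ``cutting-and-pasting'' reduction, since it forces, for each $v\in\mathcal H$, the relevant stabilizing sets to be supported on finitely many cosets and hence the stabilizers $\Gamma_v$ to be commensurable with $\Lambda$.
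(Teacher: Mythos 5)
Your proof is correct and follows essentially the same route as the paper's: bound $\|v-\gamma\cdot v\|<2R$ for $\gamma\in N_{v,R}$, use the triangle inequality and isometry of the action to get a uniform bound on $\|h(x_0)-h(\gamma\cdot x_0)\|$, invoke the properness of $\rho_-$ to bound $d(x_0,\gamma\cdot x_0)$, and conclude by bounded geometry (Proposition~\ref{boundedgeometry}). The only difference is that you omit the paper's preliminary reduction to the closed linear span of $h(X)$, which the authors themselves note is not needed for this proposition.
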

\begin{proof}
  Suppose $g:X \to \mathcal{H}_{0}$ is a $\Gamma$-equivariant coarse embedding. Let $\mathcal{V}_{\mathrm{aff}}$ be the affine subspace of $\mathcal{H}_{0}$ spanned by the image $g(X)$. Since the action of $\Gamma$ on $\mathcal{H}_{0}$ is by affine isometries, the subspace $\mathcal{V}_{\mathrm{aff}}$ is $\Gamma$-invariant.
  
  Fix a base point $x_{0} \in X$, and let $w_{0}:=g(x_{0})$. The translation $v \mapsto v - w_{0}$ maps the affine subspace $\mathcal{V}_{\mathrm{aff}}$ onto a linear subspace of $\mathcal{H}_{0}$. Let $\mathcal{H}$ be the closure of this linear subspace. Then $\mathcal{H}$ is a Hilbert space. 

  The action on $\mathcal{H}_0$ induces a conjugate action on $\mathcal{H}$ by
  \begin{equation}
    \beta_\gamma(v) = \alpha_{\gamma}(v+w_{0})-w_{0}, \quad \text{ for all } v \in \mathcal{H} \subset \mathcal{H}_{0}.
  \end{equation}
where $\alpha$ is the original action on $\mathcal{H}_{0}$. Specifically, the action $\beta_\gamma$ on $\mathcal{H}$ is given by
  $$
  \beta_\gamma(v) = U_{\gamma}v + (\alpha_\gamma(w_{0}) - w_{0}),\quad \text{ for all } v \in \mathcal{H}.
  $$
One can easily check tht that $b'_{\gamma}=\alpha_{\gamma}(w_{0}) - w_{0}$ is 1-cocycle with respect to the unitary representation $U$. Therefore, the map $\beta:\Gamma \to \mathrm{Isom}(\mathcal{H})$ defined by $\beta_{\gamma}(v)=U_{\gamma}v + b'_{\gamma}$ gives an affine isometric action of $\Gamma$ on $\mathcal{H}$.  We define a new $\Gamma$-equivariant coarse embedding $h:X \to \mathcal{H}$ defined by
  $$
  h(x) = g(x) - w_{0}, \quad \text{ for all } x \in X.
  $$

  For any $v \in \mathcal{H}$, consider the condition $\gamma \in N_{v,R}$, which implies that $\|\gamma \cdot v - v\| < 2R$. Fix a base point $x_0 = e\Lambda$ in $X$, we have
  $$
  \begin{aligned}
    \|\gamma \cdot h(x_0) - h(x_0)\| &= \|\left(\gamma \cdot h(x_0) - \gamma \cdot v\right) + \left(\gamma \cdot v - v\right) + \left(v - h(x_0)\right)\| \\
    &\leq \|\gamma \cdot \left(h(x_0) - v\right)\| + \|\gamma \cdot v - v\| + \|v - h(x_0)\| \\
    &= \|h(x_0) - v\| + \|\gamma \cdot v - v\| + \|v - h(x_0)\| \\
    &\leq 2\|h(x_0) - v\| + 2R.
  \end{aligned}
  $$
  Let $R' = 2\|h(x_0) - v\| + 2R$. Since $h$ is a coarse embedding, there exists a proper function $\rho_-: \mathbb{R}_+ \to \mathbb{R}_+$ such that
  $$
  \rho_-(d(\gamma \cdot x_0, x_0)) \leq \|h(\gamma \cdot x_0) - h(x_0)\| \leq R'.
  $$
  This implies that $d(\gamma \cdot x_0, x_0) < C$ for some constant $C$. By the bounded geometry of $X$ (Proposition 2.2), the ball $B(x_0, C)$ contains only finitely many points. Therefore, the set $\{\gamma \Lambda | \gamma \in N_{v,R}\} \subset B(x_0, C)$ is finite, which proves the proposition.
\end{proof}

We remark that the restriction to the subspace $\mathcal{H} \subseteq \mathcal{H}_0$ is not strictly necessary for the proof; the proposition remains valid for the entire ambient space $\mathcal{H}_0$. We restrict our attention to $\mathcal{H}$ here primarily to capture the essential geometric part of the action.

The Proposition \ref{Keyprop} has an analogous characterization established in \cite[Theorem 4.1]{Clement-2023}. Their approach relies on the technical machinery of Schlichting completions, deriving properties of the original group from the analytic properties of its completion. In contrast, the commensurability condition established in the Proposition \ref{Keyprop} makes it unnecessary to pass to the completion, providing a more direct perspective. The following corollary plays a crucial role in the proof of our main theorem.

\begin{corollary}\label{keycoro}
  For a Hecke pair $(\Gamma, \Lambda)$, if the quotient space $X$ admits a $\Gamma$-equivariant coarse embedding into Hilbert space, then there exists a $\Gamma$-equivariant coarse embedding $h:X \to \mathcal{H}$, such that for any $v \in \mathcal{H}$, the stabilizer $\Gamma_{v}$ is commensurable with $\Lambda$.
\end{corollary}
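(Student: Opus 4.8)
The plan is to take for $h$ the $\Gamma$-equivariant coarse embedding $h\colon X\to\mathcal{H}$ constructed in the proof of Proposition~\ref{Keyprop}, normalised (as one may) so that the base point is $e\Lambda$ and $h(e\Lambda)=0$; with this choice the $1$-cocycle $b'$ vanishes on $\Lambda$, so that $\lambda\cdot v=U_\lambda v$ for all $\lambda\in\Lambda$ and in particular $\Lambda$ fixes $0$. The decisive elementary remark is that $\Gamma_v\subseteq N_{v,R}$ for every $v\in\mathcal{H}$ and every $R>0$: if $\gamma\cdot v=v$ then, the action of $\gamma$ being isometric, $\gamma\cdot B(v,R)=B(\gamma\cdot v,R)=B(v,R)$, whence $B(v,R)\cap\gamma\cdot B(v,R)\neq\emptyset$. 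Thus Proposition~\ref{Keyprop} applies to the point $v$ and shows that $\Gamma_v\cdot\Lambda\subseteq N_{v,R}\cdot\Lambda$ is a \emph{finite} subset of $X$.

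From here I would invoke orbit--stabilizer. Since $\Gamma_v$ is a subgroup containing $e$, its image under $\Gamma\to X=\Gamma/\Lambda$ is exactly the $\Gamma_v$-orbit of $e\Lambda$, whose isotropy subgroup inside $\Gamma_v$ is $\Gamma_v\cap\operatorname{Stab}_\Gamma(e\Lambda)=\Gamma_v\cap\Lambda$; hence
\[
[\Gamma_v:\Gamma_v\cap\Lambda]=\bigl|\Gamma_v\cdot(e\Lambda)\bigr|<\infty,
\]
so $\Gamma_v\cap\Lambda$ has finite index in $\Gamma_v$, i.e.\ $\Gamma_v$ is a finite extension of a subgroup of $\Lambda$. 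Already this half of the statement is what Theorem~\ref{A} consumes: $\Gamma_v\cap\Lambda\le\Lambda$ is a-T-menable, hence so is its finite extension $\Gamma_v$.

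For the reverse inequality $[\Lambda:\Gamma_v\cap\Lambda]<\infty$, i.e.\ for genuine commensurability with $\Lambda$, I would analyse the unitary representation $U|_\Lambda$ on $\mathcal{H}$. Because $U_\lambda h(x)=h(\lambda\cdot x)$, the operators $U_\lambda$ permute the spanning set $h(X)$ exactly as $\Lambda$ permutes $X$, and all these orbits are finite by the Hecke property (Definition~\ref{defheckepair}); partitioning $X$ into its finite $\Lambda$-orbits $O_j$ produces finite-dimensional $\Lambda$-invariant subspaces $\mathcal{H}_j=\operatorname{span}h(O_j)$ whose sum is dense in $\mathcal{H}$ and on each of which $U|_\Lambda$ factors through a finite quotient of $\Lambda$. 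Consequently every vector of the algebraic span $\sum_j\mathcal{H}_j$ has a finite $\Lambda$-orbit, which gives $[\Lambda:\Gamma_v\cap\Lambda]<\infty$ for such $v$, and the same holds for $v$ in the $\Gamma$-orbit of $h(X)$, where $\Gamma_v$ contains and is commensurable with a conjugate $\operatorname{Stab}_\Gamma(x)=s\Lambda s^{-1}$ of $\Lambda$. The step I expect to be the real obstacle is the passage from this dense set to an arbitrary $v\in\mathcal{H}$: a general vector has nonzero components in infinitely many $\mathcal{H}_j$, so one must arrange the embedding (equivalently, the representation $U$) carefully enough that only finitely many of the finite-index subgroups $\{\Lambda\cap s\Lambda s^{-1}:s\in\Gamma\}$ forced by the Hecke condition can bear on a single vector. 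This is where I would concentrate the work; should it resist a clean argument, one can still fall back on the fact that only the first inequality is needed for the applications.
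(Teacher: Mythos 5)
Your argument for the forward inequality $[\Gamma_v:\Gamma_v\cap\Lambda]<\infty$ is exactly the paper's: taking $R=0$ in Proposition~\ref{Keyprop} makes $N_{v,0}$ the stabilizer $\Gamma_v$, and orbit--stabilizer on $\Gamma_v\cdot(e\Lambda)$ gives the finite index. For the reverse inequality $[\Lambda:\Gamma_v\cap\Lambda]<\infty$ you take a genuinely different route. The paper, working with $v=\sum_i a_i h(s_i\Lambda)$ and $\sum_i a_i=1$, simply observes that $K=\bigcap_i \Gamma_{s_i\Lambda}$ is a finite intersection of conjugates of $\Lambda$ (hence commensurable with $\Lambda$ by the Hecke condition) and that $K\subseteq\Gamma_v$ because an element fixing each $s_i\Lambda$ fixes every affine combination of the $h(s_i\Lambda)$. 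You instead decompose $h(X)$ into finite $\Lambda$-orbits, producing finite-dimensional $\Lambda$-subrepresentations of $U$, and deduce finiteness of $\Lambda\cdot v$ for $v$ in the algebraic span. Both approaches succeed on the same dense subspace (affine span of $h(X)$, which equals its linear span once you normalize $h(e\Lambda)=0$); the paper's $K$-argument is shorter but uses nothing that yours does not.

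The subtlety you flag at the end is a real one, and it is not resolved by the paper either: the paper's proof also restricts to ``$v$ an arbitrary vector in the dense affine subspace spanned by $h(X)$,'' so the statement ``for any $v\in\mathcal H$'' is not literally established by the written argument. A general $v$ can have nonzero components in infinitely many of the $\Lambda$-invariant finite-dimensional pieces, and the intersection of the corresponding finite-index subgroups of $\Lambda$ need not have finite index, so the extension by density is not automatic. However, your closing observation is the right remedy: in the downstream use (Lemma~\ref{cut-lemma} feeding into Theorem~\ref{1st thm}), what is actually consumed is that the stabilizers $N_i$ are a-T-menable, and that follows from the forward inequality alone together with a-T-menability of $\Lambda$ — the reverse inequality is never needed. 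So the gap you identify is harmless for Theorem~\ref{A}, and your proposal, taken with that remark, is a correct and slightly more careful rendering of the argument than the paper's own.
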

\begin{proof}
  Let $v=\sum_{i=1}^{n}a_{i}h(s_{i}\Lambda)$ be an arbitrary vector in the dense affine subspace spanned by $h(X)$, where $\sum_{i=1}^{n}a_{i}=1$. Let $R=0$, then $N_{v,R}$ is the stabilizer $\Gamma_{v}$ of $v$. 
  
  By Proposition \ref{Keyprop}, $\Gamma_{v}$ has finite orbit on $\Lambda$ in $X$. Since 
  $$
  \Gamma_{v}\cdot \Lambda \cong \Gamma_{v}/(\Gamma_{v}\cap \Lambda),
  $$
  we have $[\Gamma_{v}:\Gamma_{v}\cap \Lambda]$ is finite.
  
  On the other hand, consider the finite intersection $K=\bigcap_{i=1}^{n} \Gamma_{s_{i}\Lambda}$. Since $\Gamma_{s_{i}\Lambda}$ is commensurable with $\Lambda$ for each $i$, $K$ is also commensurable with $\Lambda$. Thus, $[\Lambda: K \cap \Lambda]< \infty$.

  For any $\gamma \in K$, $\gamma$ fixes each $s_{i}\Lambda$. Since the action is affine and $\sum_{i=1}^{n} a_i = 1$, we have
  $$
  \gamma \cdot v = \sum_{i=1}^{n} a_i (\gamma \cdot h(s_i \Lambda)) = \sum_{i=1}^{n} a_i h(s_i \Lambda) = v.
  $$
  Thus $K \subset \Gamma_{v}$. It follows that $K \cap \Lambda \subset \Gamma_{v} \cap \Lambda$, and hence 
  $$
  [\Lambda: \Gamma_{v}\cap \Lambda] \le [\Lambda: K \cap \Lambda] < \infty.
  $$
  Combining both sides, we conclude that $\Gamma_{v}$ is commensurable with $\Lambda$.
\end{proof}

\section{Equivariant Roe algebra}

In this section, we introduce the definitions of different equivariant Roe algebras that will be used in the proof of our main theorem. We first recall the equivariant Roe algebra (together with its maximal counterpart) for a metric space with cocompact and proper action of $\Gamma$, whose $K$-theory is isomorphic to that of the reduced (maximal) group $C^{*}$-algebra of $\Gamma$.  Using coarse embeddings, we introduce a twisted version of the equivariant Roe algebra, which serves as a generalization of the crossed product.

\subsection{Roe algebra}\label{section3.1}

Let $\Gamma$ be a countable discrete group acting properly and cocompactly on a proper metric space $(\Delta,d)$. Let $B$ be a $\Gamma$-$C^{*}$-algebra. 

\begin{definition}[\cite{Kasparov-Yu-2012}]\label{usual-prop-spt}
  Let $E_{B}$ be a countably generated Hilbert module over $B$ and let $\pi:C_{0}(\Delta)\to \mathcal{B}(E_{B})$ be a $*$-representation, where $\mathcal{B}(E_{B})$ means the $C^{*}$-algebra of all adjointable operators on $E_{B}$. Let $T \in \mathcal{B}(E_{B})$.
  \begin{enumerate}
    \item The support of $T$, denoted by $\operatorname{supp}(T)$, is defined as the complement of the set of all $(x,y) \in \Delta\times \Delta$ such that there exist $f,g \in C_{0}(\Delta)$ with $f(x)\neq 0$, $g(y)\neq 0$ and $\pi(f)T\pi(g)=0$;
    \item The propagation of $T$, denoted by $\operatorname{prop}(T)$, is defined as $\operatorname{prop}(T)=\sup\{ d(x,y) \mid (x,y) \in \operatorname{supp}(T) \} $;
    \item $T$ is said to be locally compact if for all $f \in C_{0}(\Delta)$, both $\pi(f)T$ and $T\pi(f)$ belong to $\mathcal{K}(E_{B})$, the algebra of compact operators on $E_{B}$.
  \end{enumerate}
\end{definition}

Let $U:\Gamma \to U(E_{B})$ be a unitary representation. We say that the triple $(C_{0}(\Delta),\Gamma,\pi)$ is a covariant system if for any $f \in C_{0}(\Delta)$ and $\gamma \in \Gamma$, 
$$
U_{\gamma}\pi(f)U_{\gamma^{-1}}=\pi(\gamma f), \quad \text{ where } \gamma f(x)=f(\gamma^{-1}x).
$$

\begin{definition}[\cite{Kasparov-Yu-2012}]
  A covariant system $(C_{0}(\Delta),\Gamma,\pi)$ is called to be admissible if:
  \begin{enumerate}
    \item $E_{B}$ is isomorphic to $H_{\Delta}\otimes E \otimes B$ as $\Gamma$-Hilbert modules over $B$, where $H_{\Delta}$ is a $\Gamma$-Hilbert space and $E$ is a separable infinite-dimensional $\Gamma$-Hilbert space;
    \item $\pi=\pi_{0} \otimes 1$ for some $\Gamma$-equivariant $*$-homomorphism $\pi_{0}: C_{0}(\Delta) \to B(H_{\Delta})$, such that $\pi_{0}(f)$ is not in $\mathcal{K}(H_{\Delta})$ for any non-zero function $f \in C_{0}(\Delta)$, and $\pi_{0}$ is non-degenerate in the sense that $\pi_{0}(C_{0}(\Delta))H_{\Delta}$ is dense in $H_{\Delta}$;
    \item For each $x\in \Delta$, $E$ is isomorphic to $\ell^{2}(\Gamma_{x})\otimes H_{x}$ as $\Gamma_{x}$-Hilbert spaces for some Hilbert spaces $H_{x}$ with a trivial $\Gamma_{x}$ action, where $\Gamma_{x}$ is the finite isotropy subgroup of $\Gamma$ at $x$.
  \end{enumerate}
\end{definition}

Such an admissible covariant system always exists. For example, choose 
$$
E_{B}=\ell^{2}(X) \otimes H_{0} \otimes \ell^{2}(\Gamma) \otimes B,
$$
where $X$ is a countable dense $\Gamma$-invariant subset of $\Delta$. The action of $\Gamma$ on $E_B$ is diagonal, given by
$$
\gamma(\xi \otimes \eta \otimes \delta_g \otimes b) = (\gamma \cdot \xi) \otimes \eta \otimes \delta_{\gamma g} \otimes b,
$$
where $(\gamma \cdot \xi)(x) = \xi(\gamma^{-1}x)$ for $\xi \in \ell^2(X)$, and $H_0$ is an infinite-dimensional separable Hilbert space with the trivial $\Gamma$-action. The representation $\pi$ is defined by pointwise multiplication on $\ell^{2}(X)$. Then $E$ in the above definition can be taken as $\ell^{2}(\Gamma)$ with the left regular representation of $\Gamma$. 


\begin{definition}[\cite{Kasparov-Yu-2012}]
  Let $(C_{0}(\Delta),\Gamma,\pi)$ be an admissible covariant system. The algebraic equivariant Roe algebra $\mathbb{C}[\Delta,B]^{\Gamma}$ is defined as the algebra of all the $\Gamma$-invariant, locally compact operators in $\mathcal{B}(E_{B})$ with finite propagation.
\end{definition}

The following definition and proposition are due to 3.13 in \cite{gong-wang-yu-maxroe}.
\begin{definition}
  The reduced equivariant Roe algebra $C^{*}(\Delta,B)^{\Gamma}$ is defined as the norm closure of $\mathbb{C}[\Delta,B]^{\Gamma}$ in $\mathcal{B}(E_{B})$.
\end{definition}

\begin{proposition}\label{reduced-crossed-product}
  If $(C_{0}(\Delta),\Gamma, \pi)$ is an admissible covariant system, then the reduced Roe algebra $C^{*}(\Delta,B)^{\Gamma}$ is isomorphic to the reduced crossed product $(B\rtimes_{r}\Gamma) \otimes \mathcal{K}(E)$.
\end{proposition}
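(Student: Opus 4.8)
The plan is to identify $C^*(\Delta,B)^\Gamma$ with $(B\rtimes_r\Gamma)\otimes\mathcal K(E)$ by working with the explicit admissible model $E_B=\ell^2(X)\otimes H_0\otimes\ell^2(\Gamma)\otimes B$ and tracking how $\Gamma$-invariance, finite propagation, and local compactness translate into conditions on matrix coefficients. First I would recall the standard picture for the \emph{trivial} coefficient case (cocompact proper action, $B=\mathbb C$): a $\Gamma$-invariant, locally compact, finite-propagation operator on $\ell^2(X)\otimes H_0\otimes\ell^2(\Gamma)$ is determined by its entries indexed by pairs of points in a fundamental domain $X_0\subset X$ for the $\Gamma$-action, and cocompactness makes $X_0$ finite up to the finite stabilizers — this is where the $\mathcal K(E)$ factor with $E=\ell^2(\Gamma)$ comes from, and where the convolution/crossed-product structure over $\Gamma$ emerges. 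I would then carry this over verbatim to the Hilbert-module setting, replacing scalar entries by elements of $B$ (or of $\mathcal K(\ell^2(X)\otimes H_0\otimes\ell^2(\Gamma))\otimes B$ after the compactness reduction) and keeping track of the $\Gamma$-action on $B$.

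Concretely, the key steps in order: (1) Fix the admissible model and a fundamental domain $X_0$ for the (cocompact, proper) $\Gamma$-action on the countable dense set $X$, so $X\cong\Gamma\times_{\text{(stab)}}X_0$; since the action is cocompact and proper with finite stabilizers, $X_0$ may be taken finite (or the construction absorbed into the $H_0$ factor). (2) Show that a $\Gamma$-invariant operator $T\in\mathcal B(E_B)$ with finite propagation is completely determined by the finite-propagation condition together with its behaviour on a single fundamental domain: write $T$ in terms of its components $T_{x,\gamma y}$ for $x,y$ ranging over $X_0$ and $\gamma\in\Gamma$, and observe that finite propagation forces $T_{x,\gamma y}=0$ for all but finitely many $\gamma$, so $T$ is a finitely-supported $\Gamma$-convolution of operators with coefficients in $B$. (3) Identify the resulting $*$-algebra with the algebraic crossed product $B[\Gamma]\otimes\mathcal K_{\text{fin}}(E)$ (matrix units for $\mathcal K(E)=\mathcal K(\ell^2(\Gamma))$ coming from the $\ell^2(\Gamma)$ factor, finite-rank operators on $H_0$ and $\ell^2(X_0)$ giving the $\mathcal K$ on $E$, and $B$-coefficients giving $B[\Gamma]$), checking that the $\Gamma$-invariance condition $U_\gamma\pi(f)U_{\gamma^{-1}}=\pi(\gamma f)$ is precisely what makes the convolution use the $\Gamma$-action on $B$. (4) Verify that the representation of this $*$-algebra on $E_B$ is (a multiple of) the \emph{regular} representation of the crossed product — this is the step that pins down "reduced" rather than "maximal," using that $\ell^2(\Gamma)$ carries the left regular representation. (5) Take norm closures: the closure of $\mathbb C[\Delta,B]^\Gamma$ is by definition $C^*(\Delta,B)^\Gamma$, and on the other side the closure of $B[\Gamma]$ in the regular representation is $B\rtimes_r\Gamma$, so the algebraic $*$-isomorphism extends to the $C^*$-level, giving $C^*(\Delta,B)^\Gamma\cong(B\rtimes_r\Gamma)\otimes\mathcal K(E)$.

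I expect the main obstacle to be step (4) together with the bookkeeping in step (2)–(3): one must be careful that "locally compact with finite propagation" in the Hilbert-module sense genuinely yields \emph{$B$-valued} convolution operators (as opposed to something living in a larger multiplier algebra), and that the induced representation on $E_B$ is unitarily equivalent to (amplification of) the regular representation $B\rtimes_r\Gamma\hookrightarrow\mathcal B(\ell^2(\Gamma,B))$ — in particular that the norm on $\mathbb C[\Delta,B]^\Gamma$ inherited from $\mathcal B(E_B)$ agrees with the reduced crossed-product norm. This requires checking that the model is "large enough" (the role of the infinite-dimensional $H_0$ and of admissibility is to guarantee this) so that the identification does not depend on the choice of admissible system; a Morita-equivalence/stability argument ($\mathcal K(E_B)$ absorbs the auxiliary multiplicities) handles the remaining ambiguity. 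The finite-stabilizer analysis from properness is routine once the fundamental domain is fixed, and the $*$-algebra isomorphism in (3) is a direct matrix-coefficient computation, so I would state those briefly and concentrate the write-up on (4).
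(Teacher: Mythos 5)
Your proposal is correct and follows essentially the same route as the proof the paper defers to (the paper gives no argument of its own, citing \cite[Theorem 5.3.2]{willett2020higher}): decompose via a fundamental domain, read off $B$-valued convolution kernels from $\Gamma$-invariance, finite propagation and local compactness, and match the norm with the reduced crossed-product norm through the regular representation on the $\ell^2(\Gamma)$ factor. The only slip is the claim that $X_0$ "may be taken finite" --- $X$ is a countable \emph{dense} subset of $\Delta$, so $X_0$ is generally countably infinite --- but your parenthetical remark about absorbing $\ell^2(X_0)$ into the $H_0$ factor already supplies the correct fix, and the finite-stabilizer bookkeeping is exactly what condition (3) of admissibility is designed to handle.
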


The reader is referred to \cite[Theorem 5.3.2]{willett2020higher} for the proof.

Given $s>0$, denote by $\mathbb{C}_{s}[\Delta, B]^{\Gamma}$ the collection of all operators in $\mathbb{C}[\Delta,B]^{\Gamma}$ whose propagation is less than $s$. The following lemma provides uniform norm estimates, which are essential for defining the maximum Roe algebra.
\begin{lemma}[{\cite[Lemma 3.4]{gong-wang-yu-maxroe}}]\label{normcontrol}
 Suppose $\Delta$ has bounded geometry. For any positive real number $s$, there exists a constant $C_{s}$ such that for every $*$-representation $\phi: \mathbb{C}[\Delta,B]^{\Gamma} \to \mathcal{B}(H_{\phi})$ on some Hilbert space $H_{\phi}$ and every $T \in \mathbb{C}_{s}[\Delta,B]^{\Gamma}$,  
 $$
 \|\phi_{T}\|_{\mathcal{B}(H_{\phi})} \le C_{s}\| T \|_{\mathcal{B}(E_{B})}.
 $$
\end{lemma}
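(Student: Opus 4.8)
The plan is to reduce the statement to the well-known uniform boundedness of representations of $\mathbb{C}_s[\Delta,B]^{\Gamma}$ by operators of bounded propagation, which in the cocompact, bounded geometry setting can be obtained by a partition-of-unity/matrix-amplification argument analogous to the one used in the non-equivariant case (e.g. the maximal Roe algebra of a bounded geometry space). First I would fix $s>0$ and cover $\Delta$ by a $\Gamma$-invariant family of subsets of diameter at most $s$, obtained from a cocompact fundamental domain: choose finitely many points $x_1,\dots,x_k$ whose $\Gamma$-orbits cover $\Delta$ up to scale $s$, and use the bounded geometry hypothesis to get a uniform bound $N=N(s)$ on the number of $s/2$-balls needed locally. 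The point of this is that an operator $T\in\mathbb{C}_s[\Delta,B]^{\Gamma}$ can be decomposed, after choosing a suitable $\Gamma$-equivariant ``coloring'' of $X$ (a countable dense $\Gamma$-invariant subset of $\Delta$) into $N$ classes, as a finite sum $T=\sum_{j=1}^{M} T_j$ where each $T_j$ is, up to conjugation by the $\Gamma$-equivariant unitaries coming from the admissible structure, a ``block-diagonal'' operator supported near the diagonal — and the number $M$ of such pieces depends only on $s$ and the bounded geometry constants, not on $T$ or on the representation $\phi$.

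The key steps, in order, are: (1) using admissibility of $(C_0(\Delta),\Gamma,\pi)$ and local compactness, realize $T\in\mathbb{C}_s[\Delta,B]^{\Gamma}$ as a $\Gamma$-invariant ``matrix'' $(T_{x,y})$ indexed by $X\times X$ with $T_{x,y}=0$ when $d(x,y)\ge s$, with entries in a fixed $C^*$-algebra (compacts tensor $B$); (2) partition $X$ $\Gamma$-equivariantly into finitely many pieces $X_1,\dots,X_N$ so that within each $X_i$ any two distinct points are at distance $\ge s$, which is possible by bounded geometry — this is essentially a coloring of the graph on $X$ with an edge whenever $0<d(x,y)<s$, whose chromatic number is bounded by $\sup_x|B(x,s)|$; (3) write $T$ as a sum over pairs $(i,i')$ of its ``$(X_i,X_{i'})$-block'', each of which is (a partial isometry conjugate of) something block-diagonal along a graph of a partial bijection, hence has norm equal to a supremum of norms of the individual entries; (4) observe that for a $*$-representation $\phi$ the norm of the image of such a block-diagonal piece is again bounded by the supremum of the norms of the entries, because $C^*$-algebra $*$-homomorphisms are contractive and the entries live in a fixed $C^*$-algebra whose representations are automatically bounded; (5) combine: $\|\phi(T)\|\le \sum_{i,i'} \|\phi(\text{block})\|\le N^2 \sup_{x,y}\|T_{x,y}\|\le N^2\|T\|_{\mathcal{B}(E_B)}$, so $C_s:=N(s)^2$ works. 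One must of course be careful that the coloring and all the unitaries implementing the block decomposition are $\Gamma$-equivariant, so that the pieces actually lie in (matrix amplifications of) $\mathbb{C}[\Delta,B]^{\Gamma}$ and $\phi$ applies to them; this is where the $\Gamma$-invariance of the metric and the cocompactness are used.

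The main obstacle I anticipate is step (4) together with the bookkeeping in step (3): one needs to argue that the image under an \emph{arbitrary} $*$-representation $\phi$ of a ``diagonal'' operator $\mathrm{diag}(a_x)_{x\in X}$ (or, more precisely, of an operator supported on the graph of a $\Gamma$-equivariant partial bijection of $X$) is controlled by $\sup_x\|a_x\|$ uniformly, rather than just by $\|\phi\|$ on the whole algebra — the subtlety is that $\mathbb{C}[\Delta,B]^{\Gamma}$ is only a pre-$C^*$-algebra and $\phi$ need not extend to its completion a priori, so one cannot naively invoke ``$*$-homomorphisms are contractive.'' The resolution is the standard one: the diagonal (resp. partial-bijection-supported) operators of a fixed color form a $*$-subalgebra isomorphic to a subalgebra of $\ell^\infty(X,\mathcal{A})^\Gamma$ for a fixed $C^*$-algebra $\mathcal{A}$ (with a $\Gamma$-action permuting coordinates), which \emph{is} a $C^*$-algebra, and the restriction of $\phi$ to it is a $*$-homomorphism of an honest $C^*$-algebra, hence contractive; so its image has norm $\le\sup_x\|a_x\|\le\|T\|$. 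Invoking Lemma~\ref{normcontrol} is not available here since that is the statement being... wait — rather, I should note that Lemma~\ref{normcontrol} \emph{is} this statement, so the task is precisely to carry out the coloring argument above; accordingly the real work is verifying that $N(s)$ can be taken to depend only on $s$ via $\sup_{x}|B(x,s)|$, which is exactly the bounded geometry hypothesis, and that all constructions respect the $\Gamma$-action.
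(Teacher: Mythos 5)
Your overall strategy — decompose a propagation-$s$ operator into boundedly many pieces supported on graphs of $\Gamma$-equivariant partial bijections, then control each piece by restricting $\phi$ to the honest $C^*$-subalgebra of zero-propagation operators, where contractivity is automatic — is correct, and is essentially the argument underlying the cited \cite[Lemma~3.4]{gong-wang-yu-maxroe} (the paper itself does not reprove it). You also correctly isolate the key subtlety in step~(4): one cannot invoke contractivity directly on the pre-$C^*$-algebra $\mathbb{C}[\Delta,B]^{\Gamma}$, but the zero-propagation part is a complete $C^*$-algebra and contractivity there suffices.

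However, step (2) as written has a real gap. For the $(X_i,X_{i'})$-blocks $T_{i,i'}$ to lie in $\mathbb{C}[\Delta,B]^{\Gamma}$ (so that $\phi$ can even be applied to them), each color class $X_i$ must be $\Gamma$-invariant, hence a union of $\Gamma$-orbits. But a $\Gamma$-orbit is not $s$-separated in general: take $\Delta=\Gamma$, the case the paper reduces to via Švarc--Milnor. Then $X=\Gamma$ is a single orbit, forcing the coloring to be trivial, yet distinct group elements are at distance $1<s$. You flag this concern (``one must of course be careful that the coloring \dots\ are $\Gamma$-equivariant'') but do not resolve it, and it cannot be resolved by a vertex coloring. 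The standard fix — and what the cited proof does — is to decompose by \emph{offset} rather than by vertex color: for $T\in\mathbb{C}_s[\Gamma,B]^{\Gamma}$, write $T=\sum_{k\in B_\Gamma(e,s)}T^{(k)}$ with $T^{(k)}_{g,h}=T_{g,h}$ if $g^{-1}h=k$ and $0$ otherwise. Since $g^{-1}h$ is left-translation-invariant, each $T^{(k)}$ is $\Gamma$-invariant and lies in $\mathbb{C}[\Gamma,B]^{\Gamma}$; it is supported on the graph of the $\Gamma$-equivariant bijection $g\mapsto gk$; and the number of nonzero terms is at most $|B_\Gamma(e,s)|$, finite by properness of the length function. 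With this replacement your steps (3)--(5) go through: $(T^{(k)})^*T^{(k)}$ has zero propagation, so $\|\phi(T^{(k)})\|^2=\|\phi((T^{(k)})^*T^{(k)})\|\le\|(T^{(k)})^*T^{(k)}\|\le\|T\|^2$, giving $C_s=|B_\Gamma(e,s)|$. A secondary slip, moot once you switch to offsets: $s$-separated classes do not make the $(X_i,X_{i'})$-blocks partial bijections; you would need $2s$-separation, since two points of $X_{i'}$ both within distance $s$ of a point of $X_i$ are only forced to be at mutual distance $<2s$.
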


We now give the definition.
\begin{definition}
  The maximal equivariant Roe algebra, denoted by $C^{*}_{\max}(\Delta,B)^{\Gamma}$, is the completion of $\mathbb{C}[\Delta,B]^{\Gamma}$ with respect to the $*$-norm 
  $$
  \|T\|_{\max}:=\sup_{\phi,H_{\phi}} \|\phi(T)\|_{\mathcal{B}(H_{\phi})},
  $$ 
  where $(\phi, H_{\phi})$ runs through representations $\phi$ of $\mathbb{C}[\Delta,B]^{\Gamma}$ on a Hilbert space $H_{\phi}$.
\end{definition}

The following proposition is analogous to the reduced case discussed in \cite{gong-wang-yu-maxroe} (3.14).
\begin{proposition}\label{maximal-crossed-product}
  Let $\Delta$ be a metric space with bounded geometry carrying a cocompact and proper action of $\Gamma$, then the maximal Roe algebra $C^{*}_{\max}(\Delta,B)^{\Gamma}$ is isomorphic to the maximal crossed product $(B\rtimes  \Gamma) \otimes \mathcal{K}(E)$.
\end{proposition}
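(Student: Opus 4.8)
The plan is to mirror the argument for the reduced case (Proposition \ref{reduced-crossed-product}, i.e.\ \cite[Theorem 5.3.2]{willett2020higher}), but to keep everything at the universal/maximal level so that the relevant completions are determined by representations on Hilbert spaces rather than by a fixed faithful one. First I would fix the concrete admissible covariant system
$$
E_B = \ell^2(X)\otimes H_0 \otimes \ell^2(\Gamma)\otimes B,
$$
with $X$ a countable dense $\Gamma$-invariant subset of $\Delta$, and $E=\ell^2(\Gamma)$ carrying the left regular representation, exactly as in the paragraph following the definition of admissibility. Because $\Delta$ is proper and the $\Gamma$-action is cocompact, $X$ is $\Gamma$-\emph{cofinite}: it has finitely many orbits, and every point stabilizer $\Gamma_x$ is finite. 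Choosing orbit representatives $x_1,\dots,x_m$ and identifying $\Gamma x_i \cong \Gamma/\Gamma_{x_i}$ gives a $\Gamma$-module decomposition $\ell^2(X)\cong\bigoplus_{i=1}^m \ell^2(\Gamma/\Gamma_{x_i})$. The key algebraic point is then the standard ``matrix units over the group'' description: an element of $\mathbb{C}[\Delta,B]^\Gamma$ of finite propagation is, by $\Gamma$-invariance and local compactness, determined by finitely much data — for each pair of orbits $(i,j)$ and each of the finitely many group elements $\gamma$ with $d(\gamma x_j, x_i)$ below the propagation bound, a compact-operator-valued coefficient in $B\otimes \mathcal{K}(H_0)\otimes\mathcal{K}(\ell^2(\Gamma_{x_i}),\ell^2(\Gamma_{x_j}))$, subject to the finite-group equivariance at $\Gamma_{x_i},\Gamma_{x_j}$. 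This identifies $\mathbb{C}[\Delta,B]^\Gamma$ as a $*$-algebra with a finite-propagation version of $B_c[\Gamma]\otimes\mathcal{K}(E)$, matching the algebraic crossed product $B\rtimes_{\mathrm{alg}}\Gamma$ tensored with the finite-rank operators on $E$ — and crucially this is a purely algebraic $*$-isomorphism, not involving any norm.

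Next I would show that the two universal $C^*$-completions of this common $*$-algebra agree. On one side, $C^*_{\max}(\Delta,B)^\Gamma$ is by definition the completion of $\mathbb{C}[\Delta,B]^\Gamma$ in the norm $\|T\|_{\max}=\sup_{\phi}\|\phi(T)\|$ over all Hilbert-space $*$-representations; Lemma \ref{normcontrol} (using bounded geometry of $\Delta$) guarantees this supremum is finite on each $\mathbb{C}_s[\Delta,B]^\Gamma$, so the norm is well defined and the completion is a genuine $C^*$-algebra. On the other side, $(B\rtimes\Gamma)\otimes\mathcal{K}(E)$ is the completion of $(B\rtimes_{\mathrm{alg}}\Gamma)\otimes\mathcal{F}(E)$ (finite-rank operators) in \emph{its} maximal norm, which is likewise the supremum of $\|\psi(\cdot)\|$ over Hilbert-space representations $\psi$. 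Under the algebraic $*$-isomorphism of the previous step, Hilbert-space representations of one dense $*$-subalgebra correspond bijectively (and isometrically, representation by representation) to Hilbert-space representations of the other — a $*$-representation is insensitive to the norm it carries. Taking suprema on both sides, the algebraic isomorphism is isometric for the two maximal norms and therefore extends to the desired $C^*$-isomorphism
$$
C^*_{\max}(\Delta,B)^\Gamma \;\cong\; (B\rtimes\Gamma)\otimes\mathcal{K}(E).
$$

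I would be careful on two points that are the main obstacles. The first is making the identification $\mathbb{C}[\Delta,B]^\Gamma\cong (B\rtimes_{\mathrm{alg}}\Gamma)\otimes\mathcal{F}(E)$ genuinely \emph{algebraic} and not secretly a completed statement: the place this could go wrong is that a $\Gamma$-invariant locally compact operator need not have finitely many nonzero orbit-coefficients unless its propagation is finite — but it does, by definition of $\mathbb{C}[\Delta,B]^\Gamma$, and local compactness plus the bounded geometry of $X$ forces each coefficient into the compacts; I would record that $\mathbb{C}[\Delta,B]^\Gamma = \bigcup_s \mathbb{C}_s[\Delta,B]^\Gamma$ is a directed union of the ``propagation $<s$'' pieces, so it suffices to treat each piece and then pass to the union. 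The second obstacle is the one genuinely requiring bounded geometry: without Lemma \ref{normcontrol} the ``sup over all representations'' norm could be infinite, so I would explicitly invoke that lemma to see $\|\cdot\|_{\max}$ is finite and hence that the two maximal completions are each honest $C^*$-algebras before comparing them. Everything else — the compatibility of the involution and product under the identification, the fact that $\mathcal{F}(E)$ is dense in $\mathcal{K}(E)$, and that tensoring with $\mathcal{K}$ commutes with forming the maximal crossed product — is routine and I would cite the analogous treatment in \cite{gong-wang-yu-maxroe} (3.14) and the reduced-case reference \cite[Theorem 5.3.2]{willett2020higher} rather than reproving it.
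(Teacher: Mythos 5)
Your proposal is correct and follows the standard route the paper itself defers to (identify the algebraic Roe algebra with a stabilized algebraic crossed product, then observe that a $*$-isomorphism of dense $*$-algebras identifies their universal completions, since ``sup over all Hilbert-space $*$-representations'' is intrinsic to the $*$-algebra); the paper gives no independent proof, citing only the analogy with the reduced case in \cite{gong-wang-yu-maxroe}. Two small imprecisions worth fixing: local compactness forces the orbit coefficients to be compact- rather than finite-rank-valued, so the algebraic model is really $(B\hat{\otimes}\mathcal{K})\rtimes_{\mathrm{alg}}\Gamma$ rather than $B\rtimes_{\mathrm{alg}}\Gamma\odot\mathcal{F}(E)$ (harmless, as you note, since $\rtimes_{\max}$ commutes with $\otimes\,\mathcal{K}$), and the claim that $X$ has finitely many $\Gamma$-orbits needs $\Delta$ to be uniformly locally finite --- which the paper's definition of bounded geometry does supply --- whereas for a general proper $\Delta$ one should instead decompose $\ell^2(X)$ via a bounded Borel fundamental domain.
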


The universality of the maximal completion gives rise to the canonical quotient map:
$$
\lambda: C^{*}_{\max}(\Delta,B)^{\Gamma} \to C^{*}(\Delta,B)^{\Gamma}.
$$
This map induces a K-theoretic morphism of abelian groups:
$$
\lambda_{*}: K_{*}(C^{*}_{\max}(\Delta,B)^{\Gamma}) \to K_{*}(C^{*}(\Delta,B)^{\Gamma}).
$$

Equip $\Gamma$ with a proper length function $|\cdot |$. Then $\Gamma$ can be regarded as a discrete metric space with bounded geometry under the left-invariant word-length metric. By the Švarc–Milnor Lemma, every space with a proper and cocompact action of $\Gamma$ is $\Gamma$-equivariantly coarsely equivalent to $\Gamma$ itself. Consequently, the equivariant Roe algebras defined above are Morita equivalent to those defined on $\Gamma$. In what follows, we shall therefore set $\Delta = \Gamma$, in which case $\lambda$ (and the induced $\lambda_*$) identifies with the canonical quotient map from the maximal to the reduced group $C^*$-algebra:
$$
\begin{gathered}
 \lambda:C^{*}_{\max}(\Gamma,B)^{\Gamma} \to C^{*}(\Gamma,B)^{\Gamma},\\
 \lambda_{*}: K_{*}(C^{*}_{\max}(\Gamma,B)^{\Gamma}) \to K_{*}(C^{*}(\Gamma,B)^{\Gamma}).
\end{gathered}
$$


\subsection{Twisted Roe Algebra}\label{section3.2}

In this section, we shall introduce the twisted Roe algebras. Let us begin by constructing some necessary geometric framework. 
  
Let $\mathcal{H}$ be the Hilbert space into which the quotient space $X=\Gamma /\Lambda$ admits a $\Gamma$-equivariant coarse embedding, i.e., $h:X \to \mathcal{H}$, as in Proposition \ref{Keyprop}. The group $\Gamma$ acts on $\mathcal{H}$ by affine isometries, and $h$ induces a $\Gamma$-equivariant map 
$$
j: \Gamma \to \mathcal{H}, \quad j(\gamma):=h(\gamma \Lambda), \quad \text{for all } \gamma \in \Gamma.
$$
Note that for any $\lambda \in \Lambda$, we have $j(\lambda)=h(e\Lambda)=j(e)$. Hence $j$ is not a coarse embedding in general.

We now construct the algebra $\mathcal{A}(\mathcal{H})$, following the approach outlined in \cite{Yu-2000} and \cite{Higson-Kasparov-2001}. Let $\mathcal{H}$ be a real Hilbert space. Let $V \subset \mathcal{H}$ be an $n$-dimensional affine subspace of the form
$$V = v_0 + \text{span}\{v_1, v_2, \dots, v_n\},$$
where $v_0, v_1, \dots, v_n \in \mathcal{H}$ and let $V^{0}=V-v_{0}$ be its underlying linear space. Denote by $\mathrm{Cl}(V^{0})$ the complexified Clifford algebra of $V^{0}$. Set 
$$
\mathcal{C}(V):=C_{0}(V,\mathrm{Cl}(V^{0})), \quad \mathcal{S}:=C_{0}(\mathbb{R}), \quad \mathcal{A}(V):=\mathcal{S}\hat{\otimes} \mathcal{C}(V).
$$
Both $\mathcal{C}(V)$ and $\mathcal{S}$ carry natural $\mathbb{Z}_2$-gradings: $\mathcal{C}(V)$ inherits its grading from the even-odd decomposition of $\mathrm{Cl}(V^{0})$, and $\mathcal{S}$ is graded by the parity of functions. Consequently, $\mathcal{A}(V)$ is understood as their graded $C^*$-tensor product.

To pass to the infinite-dimensional Hilbert space $\mathcal{H}$, we choose a directed system $\{ V_{a} \}$  of finite-dimensional affine subspaces such that $\bigcup_{a}V_{a} $ is dense in $\mathcal{H}$. Suppose $V_{a} \subset V_{b}$; then we can write $V_{b}=V_{ba}^{0} \oplus V_{a}$ with a linear subspace $V_{ba}^{0}$, and every $v_{b} \in V_{b}$ admits a unique decomposition $v_{b}=v_{ba}+v_{a}$, where $v_{ba}\in V_{ba}^{0}$ and $v_{a}\in V_{a}$. Any $g \in \mathcal{C}(V_{a})$ extends to be an element of $\mathcal{C}(V_{b})$ by 
$$
g_{ba}(v_{b})=g_{ba}(v_{ba}+v_{a}):=g(v_{a}).
$$ 
Since $V_{ba}^{0} \subset \mathrm{Cl}(V_{b}^{0})$, we then have a unbounded function
$$
C_{ba}: V_{b} \to \mathrm{Cl}(V_{b}^{0}), \quad C_{ba}(v_{b})=v_{ba},
$$
Using the unbounded multiplier $X$ on $\mathcal{S}$ defined by $X(f)(t)=t f(t)$ for $t \in \mathbb{R}$, we extend an element $f\hat{\otimes}g \in \mathcal{A}(V_{a})$ to $\mathcal{A}(V_{b})$ by
\begin{equation}\label{extend}
  \beta_{ba}(f\hat{\otimes}g)=f(X\hat{\otimes}1+1\hat{\otimes}C_{ba})(1\hat{\otimes}g_{ba}). 
\end{equation}

Taking the direct limit over the directed system $\{ V_{a} \}$, we define the $C^{*}$-algebra 
$$
\mathcal{A}(\mathcal{H})=\varinjlim\mathcal{A}(V_{a})=\varinjlim\mathcal{S}\hat{\otimes}\mathcal{C}(V_{a}).
$$
From the discussion in \cite{higson-kasparov-trout-1998}, up to a canonical isomorphism, this algebra is independent of the particular choice of a directed system. Moreover, the algebra $\mathcal{A}(\mathcal{H})$ admits an action of $\Gamma$ induced from that of $\mathcal{H}$.

Equip $\mathbb{R}_{+}\times \mathcal{H}$ with the weakest topology such that the projection to $\mathcal{H}$ is weakly continuous and the function $t^{2}+\|h\|^{2}$ is continuous. Concretely, this topology can be constructed as follows: a net $\{ (t_{i},v_{i}) \}_{i}$ in $\mathbb{R}_{+}\times \mathcal{H}$ converges to $(t,v)$ if and only if $t_{i}^{2}+\|v_{i}\|^{2} \to t^{2}+\|v\|^{2}$, and for any $u\in \mathcal{H}$, $\left<v_{i},u \right> \to \left<v,u \right>$. Under this topology, $\mathbb{R}_{+}\times \mathcal{H}$ is second countable, locally compact, and Hausdorff.

For each finite dimensional subspace $V_a\subseteq \mathcal H$, the commutative algebra $C_{0}(\mathbb{R}_{+}\times V_{a})$ is in the center of $\mathcal{A}(V_{a})$. Moreover, the structure maps $\beta_{ba}$ restrict to homomorphisms from $C_{0}(\mathbb{R}_{+}\times V_{a})$ to $C_{0}(\mathbb{R}_{+}\times V_{b})$. It follows that the inductive limit is isomorphic to $C_{0}(\mathbb{R}_{+}\times \mathcal{H})$, where $\mathbb{R}_{+}\times \mathcal{H}$ is equipped with the topology given above. This makes $\mathcal A(\mathcal H)$ a $(\mathbb{R}_{+}\times \mathcal{H})$-$C^*$-algebra.

Fix a separable complex infinite-dimensional Hilbert space $H_{0}$ on which $\Gamma$ acts trivially, and denote by $\mathcal{K}$ the algebra of compact operators on $H_{0}$. For any $\Gamma$-$C^{*}$-algebra $B$, $\Gamma$ acts diagonally on the tensor product $B\hat{\otimes}\mathcal{K}$. 

\begin{definition}
  For any $a\in \mathcal{A}(\mathcal{H})\hat{\otimes}B\hat{\otimes}\mathcal{K}$, define its support $\operatorname{supp}(a)$ as the complement of the set of all points $v\in \mathbb{R}_{+}\times \mathcal{H}$ for which there exists $f\in C_{0}(\mathbb{R}_{+}\times \mathcal{H})$ with $f(v) \neq 0$ and 
  $$
  (f\hat{\otimes}k)\cdot a=0 \quad \text{ for all } k\in B\hat{\otimes}\mathcal{K}.
  $$ 
\end{definition}


With these notions, we can now define the algebraic twisted Roe algebra. Define
$$
E_{\mathcal{A}}:=\ell^{2}(\Gamma)\hat{\otimes}\mathcal{A}(\mathcal{H})\hat{\otimes}B\hat{\otimes}\mathcal{K}.
$$ 
The space $E_{\mathcal{A}}$ can be viewed as the completion of the space of finitely supported functions $\xi: \Gamma \to \mathcal{A}(\mathcal{H})\hat{\otimes}B\hat{\otimes}\mathcal{K}$ with respect to the $\mathcal{A}(\mathcal{H})\hat{\otimes}B\hat{\otimes}\mathcal{K}$-valued inner product
$$
\left<\xi,\eta \right>:=\sum_{\gamma \in \Gamma}\xi(\gamma)^{*}\eta(\gamma).
$$ 
The right $\mathcal{A}(\mathcal{H})\hat{\otimes}B\hat{\otimes}\mathcal{K}$-module action is given by the pointwise multiplication:
$$
(\xi \cdot a)(\gamma):=\xi(\gamma)a, \quad \text{for all } a \in \mathcal{A}(\mathcal{H})\hat{\otimes}B\hat{\otimes}\mathcal{K}.
$$ 
Moreover, $\Gamma$ acts on $E_{\mathcal{A}}$ by 
$$
(\gamma \cdot \xi)(g)=\gamma\cdot (\xi(\gamma^{-1}g)), \quad \text{for all } \gamma, g \in \Gamma.
$$ 
Thus, $E_{\mathcal{A}}$ carries the structure of a $\Gamma$-Hilbert $C^{*}$-module over $\mathcal{A}(\mathcal{H})\hat{\otimes}B\hat{\otimes}\mathcal{K}$. 

\begin{definition}
  \label{defalgebraictwisted}
  Define the \emph{algebraic twisted Roe algebra} $\mathbb{C}[\Gamma, \mathcal{A}(\mathcal{H})\hat{\otimes}B]^{\Gamma} $ as the set of all the functions $T:\Gamma \times \Gamma \to \mathcal{A}(\mathcal{H})\hat{\otimes}B\hat{\otimes}\mathcal{K}$ such that for any  $T_{g,h} \in \mathcal{A}(\mathcal{H})\hat{\otimes}B\hat{\otimes}\mathcal{K}$:
  \begin{enumerate}[label=(\arabic*)]
    \item $\exists\ M>0$ such that $\|T_{g,h}\| \le M$;
    \item $\exists\ L>0$ such that $|\{ g \mid T_{g,h}\neq 0 \}|, | \{ h \mid T_{g,h} \neq 0 \}|<L$;
    \item $\exists\ r_{1}\ge 0$ such that $T_{g,h}=0$ if $d(g,h)>r_{1}$;
    \item $\exists\ r_{2}\ge 0$ such that $\operatorname{supp}(T_{g,h}) \subset B(j(g),r_{2})$;
    \item $T$ is $\Gamma$-equivariant, i.e., $T_{g,h}=\gamma\cdot (T_{\gamma^{-1}g,\gamma^{-1}h})$ for any $\gamma \in \Gamma$.
  \end{enumerate}
\end{definition}

\begin{remark}
  We should point out that the condition (4) in Definition \ref{defalgebraictwisted} is implicitly contained in the definition of standard equivariant Roe algebra. Indeed, since $\Gamma$ has a proper length function, for any radius $r_{1}$, the set $B_{\Gamma}(e,r_{1})$ is finite, where $e\in \Gamma$ denotes the identity element of $\Gamma$. Consider an operator $T$ with propagation less than $r_{1}$, if $T_{e,h}\neq 0$, then $h \in B_{\Gamma}(e,r_{1})$. If there exists some $r_{2}>0$ such that $\operatorname{supp}(T_{e,h})\subset B(j(e),r_{2})$, by the $\Gamma$-equivariance and also by the action being isometric, we have that $\operatorname{supp}(T_{g,h}) \subset B(j(g),r_{2})$ for all $g,h \in \Gamma$. Since $C_{0}(\mathbb{R}_{+} \times \mathcal{H})$ is dense in $\mathcal{A}(\mathcal{H})$, we can always assume the extension of such $r_{2}$ up to a small error.
\end{remark}

Parrallel to the equivariant Roe algebra as in Definition \ref{usual-prop-spt}, we can define the support of an operator $T \in \mathbb{C}[\Gamma,\mathcal{A}(\mathcal{H})\hat{\otimes}B]^{\Gamma} $ by 
$$
\operatorname{supp}(T)=\{ (g,h,v)\in \Gamma \times \Gamma \times (\mathbb{R}_{+}\times \mathcal{H}) \mid v \in \operatorname{supp}(T_{g,h}) \}.
$$

The algebraic twisted Roe algebra $\mathbb{C}[\Gamma, \mathcal{A}(\mathcal{H})\hat{\otimes}B]^{\Gamma} $ admits a natural *-representation on the Hilbert module $E_{\mathcal{A}}$, given by 
$$
(T\xi)(g)=\sum_{h\in \Gamma}T_{g,h}\xi(h).
$$ 

Then the formal definition is as follows:
\begin{definition}\label{deftwisted}
  The \emph{reduced twisted Roe algebra} $C^{*}(\Gamma, \mathcal{A}(\mathcal{H})\hat{\otimes}B)^{\Gamma} $ is the completion of the algebraic twisted Roe algebra under the operator norm in $\mathcal{L}(E_{\mathcal{A}})$, the $C^*$-algebra of all adjointable $\mathcal{A}(\mathcal{H})\hat{\otimes}B\hat{\otimes}\mathcal{K}$-homomorphism.
\end{definition}

By Lemma \ref{normcontrol}, we can also define the universal norm by taking the supremum over all $*$-representations of the algebraic twisted Roe algebra into bounded operators on Hilbert spaces.

\begin{definition}
  The \emph{maximal twisted Roe algebra} $C^{*}_{\max}(\Gamma, \mathcal{A}(\mathcal{H})\hat{\otimes}B)^{\Gamma} $
  is given by the completion of $\mathbb{C}[\Gamma, \mathcal{A}(\mathcal{H})\hat{\otimes}B]^{\Gamma} $ under the norm  
  $$
  \|T\|_{\max}:=\sup_{\phi,\mathcal{H}_{\phi}} \|\phi(T)\|_{\mathcal{B}(\mathcal{H}_{\phi})},
  $$ where $(\phi,\mathcal{H}_{\phi})$ runs through all $*$-representations of $\mathbb{C}[\Gamma, \mathcal{A}(\mathcal{H})\hat{\otimes}B]^{\Gamma} $ on Hilbert spaces.
\end{definition}

By the universal property of the maximal completion, we have the canonical quotient map: 
$$
\lambda: C^{*}_{\max} (\Gamma, \mathcal{A}(\mathcal{H})\hat{\otimes}B)^{\Gamma}  \to C^{*}(\Gamma, \mathcal{A}(\mathcal{H})\hat{\otimes}B)^{\Gamma} .
$$ 
This map induces a homomorphism on $K$-theory:
$$
\lambda_{*}: K^{}_{*}(C^{*}_{\max} (\Gamma, \mathcal{A}(\mathcal{H})\hat{\otimes}B)^{\Gamma} ) \to K^{}_{*}(C^{*}(\Gamma, \mathcal{A}(\mathcal{H})\hat{\otimes}B)^{\Gamma} ).
$$


\section{The Proof of the Main Theorem}\label{section4}

In this section, we prove Theorem \ref{A}. The argument follows the Dirac-dual-Dirac method, by employing the geometric construction developed by Yu in \cite{Yu-2000} for spaces that admit coarse embeddings into Hilbert space. We shall first construct the following diagram: 
\begin{equation}\label{cmmu}
    \begin{tikzcd}
	{   K_{*}(C^{*}_{\max}(\Gamma, B)^{\Gamma} \hat{\otimes}\mathcal{S})} && {   K_{*}(C^{*}(\Gamma, B)^{\Gamma} \hat{\otimes}\mathcal{S})} \\
	{ K_{*}(C^{*}_{\max}(\Gamma,\mathcal{A}(\mathcal{H})\hat{\otimes}B)^{\Gamma} )} && { K_{*}(C^{*}(\Gamma,\mathcal{A}(\mathcal{H})\hat{\otimes}B)^{\Gamma} )} \\
	{   K_{*}(C^{*}_{\max}(\Gamma, B)^{\Gamma} \hat{\otimes}\mathcal{S})} && {   K_{*}(C^{*}(\Gamma, B)^{\Gamma} \hat{\otimes}\mathcal{S})}
	\arrow["{{\lambda_{*}}}", from=1-1, to=1-3]
	\arrow["{{\beta_{*}}}"', from=1-1, to=2-1]
	\arrow["{{\beta_{*}}}", from=1-3, to=2-3]
	\arrow["{{\lambda_{*}}}", from=2-1, to=2-3]
	\arrow["{\alpha_{*}}"', from=2-1, to=3-1]
	\arrow["{\alpha_{*}}", from=2-3, to=3-3]
	\arrow["{\lambda_{*}}", from=3-1, to=3-3]
\end{tikzcd}
\end{equation}
The proof consists of two parts: The vertical direction follows Yu's construction in \cite{Yu-2000}. The horizontal isomorphism in the middle row comes from the cutting-and-pasting argument that reduces the problem to subgroups commensurable with $\Lambda$.


\subsection{The  Bott map and the Dirac map}\label{section4.1}

We begin with the construction of the Bott map and the Dirac map. We shall first recall the Dirac and dual-Dirac element introduced by Higson, Kasparov and Trout in \cite{higson-kasparov-trout-1998}. 

For each finite-dimensional affine subspace $V\subset \mathcal{H}$, the Clifford algebra $\mathrm{Cl}(V^0)$ is a finite-dimensional $C^*$-algebra with a canonical basis, where $V^0$ is the linear part of $V$. The basis gives a Hilbert space structure of $\mathrm{Cl}(V^0)$, denoted by $\mathfrak{h}_{V}$. Let $\mathcal{L}(V):=L^{2}(V,\mathfrak{h}_V)$ and let $\mathfrak{s}(V)$ be the Schwarz subspace of $\mathcal{L}(V)$.
If $W\subseteq \mathcal H$ is a finite-dimensional affine subspace containing $V$. There is a canonical inclusion
$$\mathcal{L}(V)\to\mathcal{L}(W)$$
by $\xi\mapsto \xi\otimes G$, where $G\in \mathcal{L}(V')$ is the Gauss function on $V'$ and $V'\oplus V=W$. Passing to the direct limit, we have $\mathcal{L}(\mathcal{H})=\varinjlim\mathcal{L}(V) $ and its dense subspace $\mathfrak{s}(\mathcal{H}):=\varinjlim \mathfrak{s}(V) $, where $V$ runs over all finite-dimensional subspaces of $\mathcal H$.

For any vector $v_0 \in V$, define $C_{V,v_0}$ to be the unbounded function with center $v_0$
$$C_{V,v_0}(v)=v-v_0\in V\subseteq \mathrm{Cl}(V).$$
It acts on $\mathcal{L}(V)$ as an unbounded operator by
$$
(C_{V,v_0}\cdot \xi) (v):=(v-v_{0})\cdot \xi (v), \quad \text{for all }  \xi\in \mathfrak{s}(V),
$$
By taking the direct limit, we extend to a multiplier $C$ on $\mathcal{A}(\mathcal{H})$.

Define the Bott map between the algebraic Roe algebra and the algebraic twisted Roe algebra by
$$
\begin{aligned}
 \beta_{t}: \mathbb{C}[\Gamma,B]^{\Gamma} \hat{\otimes} \mathcal{S} &\to \mathbb{C}[\Gamma,\mathcal{A}(\mathcal{H})\hat{\otimes}B]^{\Gamma} ,\\
  \beta_{t}(T \hat{\otimes} f)_{g,h}&=T_{g,h} \hat{\otimes} C_{g,h}(f_{t}),
\end{aligned}
$$
where $g,h\in \Gamma$, $C_{g,h}(f_{t})=f_{t}(X\hat{\otimes}1+1\hat{\otimes}C_{j(g)})$, $C_{j(g)}$ denotes the Clifford multiplier associated with $j(g)$, and $f_t(r)=f(r/t)$. It is direct to check that $\gamma\cdot f_{t}(X\hat{\otimes}1+1\hat{\otimes}C_{j(g)})=f_{t}(X\hat{\otimes}1+1\hat{\otimes}C_{j(\gamma g)})$, which implies that the Bott map $\beta_{t}$ is $\Gamma$-equivariant.

\begin{lemma}\label{Bott-asymptotic}
  The Bott map $\beta_{t}$ extends to asymptotic morphisms:
  $$
  \begin{aligned}
   \beta: C^{*}(\Gamma,B)^{\Gamma} \hat{\otimes} \mathcal{S} &\to C^{*}(\Gamma,\mathcal{A}(\mathcal{H})\hat{\otimes}B)^{\Gamma} ,\\
   \beta_{\max}: C^{*}_{\max}(\Gamma,B)^{\Gamma} \hat{\otimes} \mathcal{S} &\to C^{*}_{\max}(\Gamma,\mathcal{A}(\mathcal{H})\hat{\otimes}B)^{\Gamma} .
  \end{aligned}
  $$
\end{lemma}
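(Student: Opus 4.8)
The plan is to verify, in the standard way, that the family $\{\beta_t\}_{t\geq 1}$ of $*$-homomorphisms on the algebraic level assembles into an asymptotic morphism, and that this passes to both the reduced and maximal completions. First I would check that each $\beta_t$ is a well-defined $*$-homomorphism on the algebraic twisted Roe algebra: conditions (1), (2), (3), (5) of Definition \ref{defalgebraictwisted} are preserved verbatim since $\beta_t$ acts only in the $\mathcal{A}(\mathcal{H})$-variable and leaves the $\Gamma\times\Gamma$-matrix structure untouched, while condition (4) holds because $C_{j(g)}(f_t)$ has support in a ball around $j(g)$ whose radius is controlled by the support of $f\in\mathcal{S}$ and the parameter $t$ (this is precisely the bounded-geometry argument in the Remark following Definition \ref{defalgebraictwisted}). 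The $\Gamma$-equivariance of $\beta_t$ was already noted via $\gamma\cdot f_t(X\hat\otimes 1+1\hat\otimes C_{j(g)})=f_t(X\hat\otimes 1+1\hat\otimes C_{j(\gamma g)})$, which follows from $j(\gamma g)=\gamma\cdot j(g)$ and the affine isometric action.

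Next I would establish the asymptotic-morphism axioms. The multiplicativity failure $\beta_t(a)\beta_t(b)-\beta_t(ab)$ and the additivity/$*$-compatibility: since $T\hat\otimes f\mapsto T_{g,h}\hat\otimes C_{g,h}(f_t)$ is genuinely multiplicative in the $T$-variable (matrix multiplication over $\Gamma$) and the only nonlinearity sits in $f\mapsto C_{g,h}(f_t)$, the relevant estimates reduce to the classical fact from \cite{higson-kasparov-trout-1998} that the Bott maps $\mathcal{S}\to\mathcal{A}(V)$, $f\mapsto f(X\hat\otimes 1+1\hat\otimes C_V)$, form an asymptotic morphism as $t\to\infty$ (one uses that $\mathcal{S}=C_0(\mathbb{R})$ is generated by resolvents, and that $\frac{1}{t}C_{j(g)}\to 0$ in the appropriate graded sense so the cross terms vanish asymptotically). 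Concretely, for generators one writes $f_t g_t - (fg)_t$ and the commutator-type terms involving $\frac1t C_{j(g)}$ and $\frac1t C_{j(h)}$, and since $d(g,h)\leq r_1$ on the support of $T$, the difference $j(g)-j(h)$ stays in a bounded region of $\mathcal{H}$, so these error terms are norm-small uniformly as $t\to\infty$; combined with the uniform bounds (1)–(2) this gives $\|\beta_t(a)\beta_t(b)-\beta_t(ab)\|\to 0$ in the norm of $\mathbb{C}[\Gamma,\mathcal{A}(\mathcal{H})\hat\otimes B]^\Gamma$ (measured via its representation on $E_\mathcal{A}$, or via the max norm). Continuity in $t$ is clear since everything depends on $t$ through $f_t$ continuously.

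For the extension to the completions I would invoke Lemma \ref{normcontrol}: the key point is that $\beta_t$ does not increase propagation in the $\Gamma$-direction — if $T\in\mathbb{C}_s[\Gamma,B]^\Gamma\hat\otimes\mathcal{S}$ then $\beta_t(T\hat\otimes f)$ still has $\Gamma$-propagation $\leq s$ — so the uniform norm control of Lemma \ref{normcontrol} applies on each propagation slab, giving a constant $C_s$ bounding the max-norm of $\beta_t$ on $\mathbb{C}_s$ in terms of the reduced norm. Since the completions are built as increasing unions of such slabs, $\beta_t$ extends continuously to $\beta_{\max}$ on $C^*_{\max}(\Gamma,B)^\Gamma\hat\otimes\mathcal{S}$; for the reduced version, $\beta_t$ is manifestly bounded for the operator norms on $E_{B}\hat\otimes\mathcal{S}$ and $E_\mathcal{A}$ (the Bott map at finite dimension is contractive), so it extends to $\beta$ on $C^*(\Gamma,B)^\Gamma\hat\otimes\mathcal{S}$. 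The asymptotic-morphism identities, being norm estimates valid on a dense $*$-subalgebra, pass to the closures.

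The main obstacle I anticipate is not conceptual but bookkeeping: one must be careful that the support condition (4) remains stable along the construction and that the ``ball of radius $r_2$'' can be enlarged by a controlled amount depending on $t$ without destroying membership in the algebraic twisted Roe algebra — this is exactly why the bounded geometry of $X$ (Proposition \ref{boundedgeometry}) is needed, and why one works with $C_0(\mathbb{R}_+\times\mathcal{H})$-dense subalgebras so that supports can be fattened up to small error. A secondary subtlety is verifying that the asymptotic estimates are uniform over the (finite, by condition (2)) set of nonzero matrix entries $T_{g,h}$ and that, after $\Gamma$-equivariance, it suffices to control the finitely many entries with $g=e$; the properness of the length function makes this finite. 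Modulo these standard checks, the lemma follows from the finite-dimensional Bott asymptotic morphism of Higson–Kasparov–Trout applied entrywise.
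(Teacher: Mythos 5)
Your proposal is correct and follows essentially the same route as the paper: entrywise verification that $\beta_t$ lands in the algebraic twisted Roe algebra, asymptotic multiplicativity via the resolvent/generator argument exploiting the bounded displacement $\|j(g)-j(k)\|$ along finite-propagation entries, and extension to both completions via the uniform norm control of Lemma~\ref{normcontrol} (for the maximal norm) together with operator-norm boundedness on the Hilbert module (for the reduced norm). One small imprecision: the Higson--Kasparov--Trout Bott map $f\mapsto f_t(X\hat\otimes 1+1\hat\otimes C)$ at a \emph{fixed} center is an exact $*$-homomorphism, not an asymptotic morphism, so the asymptotic failure here arises entirely from the shift of center from $g$ to the intermediate index $k$ under matrix multiplication — but since this is precisely the estimate you carry out afterwards, the argument goes through as in the paper.
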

\begin{proof}
  To prove that $\beta$ and $\beta_{\max}$ are asymptotic morphisms, we need to verify that the family of maps $\beta_{t}$ is uniformly bounded and asymptotically multiplicative.
  
  For the uniform boundedness, note that for any fixed $t$, the operator $f_{t}(X\hat{\otimes}1+1\hat{\otimes}C_{j(g)})$ is bounded in $\mathcal{A}(\mathcal{H})$. Since $T$ has finite propagation and coefficients in $B$, the norm of $\beta_{t}(T\hat{\otimes}f)$ is controlled by the norms of $T$ and $f$.
  
  To see $\beta_{t}$ is asymptotic multiplicative, let $T, T' \in \mathbb{C}[\Gamma, B]^{\Gamma} $ and $f, f' \in \mathcal{S}$. Consider the product:
  \begin{equation}\label{eq:beta_prod}
  (\beta_{t}(T\hat{\otimes}f)\beta_{t}(T'\hat{\otimes}f'))_{g,h} = \sum_{k \in \Gamma} T_{g,k}T'_{k,h} \hat{\otimes} C_{g,k}(f_{t})C_{k,h}(f'_{t}).
  \end{equation}
  Recall that $C_{g,k}(f_{t}) = f_{t}(\C_{g})$ where $\C_{g} = X\hat{\otimes}1+1\hat{\otimes}C_{j(g)}$. Thus,
  $$
  C_{g,k}(f_{t})C_{k,h}(f'_{t}) = f_{t}(\C_{g})f'_{t}(\C_{k}).
  $$
  On the other hand,
  \begin{equation}\label{eq:prod_beta}
  \beta_{t}((T\hat{\otimes}f)(T'\hat{\otimes}f'))_{g,h} = \sum_{k \in \Gamma} T_{g,k}T'_{k,h} \hat{\otimes} (f\cdot f')_{t}(\C_{g}).
  \end{equation}
  The difference between \eqref{eq:beta_prod} and \eqref{eq:prod_beta} involves the term $f_{t}(\C_{g})f'_{t}(\C_{k}) - (f\cdot f')_{t}(\C_{g})$. Since $T$ has finite propagation, the distance $d(g,k)$ is uniformly bounded for all $k$ such that $T_{g,k} \neq 0$. By coarse embedding, $\|j(g) - j(k)\|\leq \rho_+(d(g\Lambda,k\Lambda))\leq \rho_+(d(g,k))$. Consequently, the difference of the operators $\C_{g} - \C_{k} = 1\hat{\otimes}C_{j(g)-j(k)}$ is uniformly bounded for all pairs $(g,k)$ such that $T_{g,k}\ne 0$.

Consider the generators $t\mapsto\frac{1}{t\pm i}\in C_0(\R)$. Let $R_{t}(\C_g):=(t^{-1}\C_k+i)^{-1}$. The difference $t^{-1}\C_{g} - t^{-1}\C_{k}$ converges to 0 in norm as $t \to \infty$. Following a direct calculation.
$$
R_{t}(\C_{g}) - R_{t}(\C_{k}) = R_{t}(\C_{g}) (t^{-1}(\C_{k}-\C_{g})) R_{t}(D_{k}),
$$
which converges to 0 in norm. Similar argument also holds for $t\mapsto \frac{1}{t- i}$. Thus, for any $f\in C_0(\R)$, we conclude that $\| f_{t}(\C_{g}) - f_{t}(\C_{k}) \| \to 0$. Thus $\|f_{t}(\C_{g})f'_{t}(\C_{k}) -f_{t}(\C_{g})f'_{t}(\C_{g})\|\to 0$ as $t\to \infty$.
  
Thus, $\beta_{t}$ defines an asymptotic morphism on the algebraic tensor product. By continuity, it extends to the maximal and also to the reduced completions.
\end{proof}

Following from the above lemma, we obtain the Bott map for both maximal and reduced Roe algebras on $K$-theory: 
$$
 \beta_{*}: K_{*}(C^{*}_{(\max)}(\Gamma, B)^{\Gamma} \hat{\otimes}\mathcal{S}) \to K_{*}(C^{*}_{(\max)}(\Gamma,\mathcal{A}(\mathcal{H})\hat{\otimes}B)^{\Gamma} ).
$$


Next, we recall the construction of the Dirac element $\alpha_{*}$. For more details, the reader can refer to \cite{Yu-2000} and \cite{fu-wang-2016}. 
On a finite-dimensional affine subspace $V$, the Dirac operator $D_{V}$ on $\mathcal{L}(V)$ with domain $\mathfrak{s}(V)$ is defined by:
$$
D_{V}(\xi)=\sum_{i=1}^n(-1)^{deg(\xi)}\frac{\partial(\xi)}{\partial x_i}v_i, \quad \forall \xi \in \mathfrak{s}(V),
$$
where $\{ v_{1}, v_{2}, \ldots , v_{n} \}$ is an orthonormal basis of $V^{0}$, and $x_{i}$ denotes the coordinate function on $V$ corresponding to the basis vector $v_{i}$. 

Fix a base point $x\in \Gamma$. For each $n\in \mathbb{N}$, define the finite-dimensional affine subspace
$$
\begin{aligned}
 W_{n}(x)&=j(x)+\operatorname{span} \{ j(g)-j(x) \mid g\in \Gamma, |g^{-1}x| \le n \}\\
 &= h(x\Lambda)+ \operatorname{span} \{ h(g\Lambda)-h(x\Lambda) \mid g \in \Gamma, |g^{-1}x| \le n \},
\end{aligned}
$$
where $h$ is the coarse embedding proved by Proposition \ref{Keyprop} and $j(\gamma)=h(\gamma \Lambda)$ is the lifted map introduced in Section \ref{section3.1}. Because the length function $|\cdot |$ on $\Gamma$ is proper, each $W_{n}(x)$ is indeed finite-dimensional. Set $W(x)=\bigcup_{k=1}^{\infty}W_{k}(x)$. Since $\mathcal{H}$ the closure of the linear span of $j(\Gamma)$, the subspace $W(x)$ is dense in $\mathcal{H}$. 

Since $\Gamma$ acts on $\mathcal{H}$ by affine isometries, for any $\gamma \in \Gamma$, we have $W_{n}(\gamma x)=\gamma\cdot W_{n}(x)$. Consequently, $\{ W_{n}(x) \}$ forms a directed system of finite-dimensional affine subspaces. Set $W_{0}(x)=\{j(x)\}$. For each $n\in \mathbb{N}$, let $V_{n}(x)$ be the orthogonal complement of $W_{n}(x)$ in $W_{n+1}(x)$, which is a finite-dimensional linear subspace of $\mathcal{H}$. Let $V(x)=\bigoplus _{n=1}^{\infty}V_{n}(x)$, $V(x)$ is a dense linear subspace of $\mathcal{H}$. We have the following decompositions
$$
\mathcal{A}(W_{n+1}(x)) \cong \mathcal{A}(W_{n}(x))\hat{\otimes}\mathcal{A}(V_{n}(x)),\quad \mathcal{L}(W_{n+1}(x)) \cong \mathcal{L}(W_{n}(x))\hat{\otimes}\mathcal{L}(V_{n}(x)).
$$

Write $D_{V_{n}(x)}$ simply as $D_{n}$. Let $C_{n}=C_{V_n(x),0}$ be the Clifford multiplier on $V_n$. For each $n\in \mathbb{N}$ and $t\ge 1$, define an unbounded operator $B_{n,t}(x)$ on $\mathcal{L}(V(x))$ by 
$$
\begin{aligned}
  B_{n,t}(x)&=t_{0}D_{0}+t_{1}D_{1}+\cdots+t_{n-1}D_{n-1}+t_{n}(D_{n}+C_{n}) \\ 
  &+t_{n+1}(D_{n+1}+C_{n+1})+\cdots, 
\end{aligned}
$$
where $t_{i}=1+t^{-1}i$. According to the discussion in Section 7 of \cite{Yu-2000}, each $B_{n,t}(x)$ is essentially self-adjoint and has compact resolvent. 

The group $\Gamma$ acts on $\mathcal{L}(\mathcal{H})$ by:
$$
(\gamma\cdot \xi)(v)=\widetilde{\gamma}(\xi(\gamma^{-1}v)), \quad \text{for all } \gamma \in \Gamma, v \in \mathcal{H}, \xi \in \mathcal{L}(\mathcal{H}),
$$
where $\widetilde{\gamma}$ is the unitary operator on the spinor fiber induced by the linear part of the affine isometry $\gamma$. This action is compatible with the action on the algebra $\mathcal{A}(\mathcal{H})$ defined in Section \ref{section3.2}. Because the construction of the subspaces $V_n(x)$ is canonical and $\Gamma$ acts by affine isometries, we have $W_k(\gamma x) = \gamma W_k(x)$, and consequently each $B_{n,t}(x)$ satisfies the equivariant property:
$$
B_{n,t}(\gamma x) = \gamma B_{n,t}(x) \gamma^{-1}.
$$

Let $\mathcal{K}_{\mathcal{L}}$ denote the algebra of compact operators in $\mathcal{L}(\mathcal{H})$. Define $\mathbb{C}[\Gamma,\mathcal{K}_{\mathcal{L}}\hat{\otimes}B]^{\Gamma} $ as the $C^{*}$-algebra of functions $T:\Gamma \times \Gamma \to \mathcal{K}_{\mathcal{L}}\hat{\otimes}B\hat{\otimes}\mathcal{K}$ such that for any $T_{g,h}\in \mathcal{K}_{\mathcal{L}}\hat{\otimes}B\hat{\otimes}\mathcal{K}$:
\begin{enumerate}[label=(\arabic*)]
  \item $\exists \ M > 0$ such that $\|T_{g,h}\| \le  M$;
  \item $\exists \ L>0$ such that $|\{ g \mid T_{g,h} \neq 0 \}|$, $| \{ h \mid T_{g,h} \neq 0 \}| < L$ ;
  \item $\exists \ R>0 $ such that $T_{g,h}=0$ if $d(g,h)>R$;
  \item $T$ is $\Gamma$-invariant, i.e., $T_{\gamma g, \gamma h}=\gamma \cdot T_{g,h}$ for all $\gamma \in \Gamma$.
\end{enumerate}
The algebra $\mathbb{C}[\Gamma,\mathcal{K}_{\mathcal{L}}\hat{\otimes}B]^{\Gamma} $ admits a canonical *-representation on the Hilbert module $\ell^{2}(\Gamma)\hat{\otimes}B\hat{\otimes}\mathcal{K}$. Denote by $C^{*}(\Gamma,\mathcal{K}_{\mathcal{L}}\hat{\otimes}B)^{\Gamma} $ and $C^{*}_{\max}(\Gamma,\mathcal{K}_{\mathcal{L}}\hat{\otimes}B)^{\Gamma} $ the completions of $\mathbb{C}[\Gamma,\mathcal{K}_{\mathcal{L}}\hat{\otimes}B]^{\Gamma} $ with respect to the reduced and maximal norms, respectively.

For every $n\in \mathbb{N}_{+}$, $t\ge 1$ and $x \in \Gamma$, we define
$$
\begin{aligned}
\theta^{n}_{t}(x):  & \mathcal{A}(W_{n}(x))\hat{\otimes}B\hat{\otimes}\mathcal{K} \longrightarrow \mathcal{K}_{\mathcal{L}}\hat{\otimes}B\hat{\otimes}\mathcal{K} \\
& \theta^{n}_{t}(x)(f \hat{\otimes} a \hat{\otimes} k)=f_{t}\bigl(B_{n,t}(x)\bigr)M_{a_{t}}\hat{\otimes}k,
\end{aligned}
$$
where $f\in \mathcal{S}$, $a\in \mathcal{C}(W_{n}(x))$, $k\in B \hat{\otimes}\mathcal{K}$, and $a_{t}(v)=a(t^{-1}v)$. The multiplier $M_{a_{t}}$ is defined by 
$$
(M_{a_{t}}\xi)(v)=a_{t}(w)\cdot \xi(v), \quad \forall v \in \mathcal{H}, \xi \in \mathcal{L}(\mathcal{H}),
$$
where $w$ denotes the orthogonal projection of $v$ onto the affine subspace $W_{n}(x)$. 

According to Lemma 5.8 in \cite{higson-kasparov-trout-1998}, we know that $\theta^{n}_{t}(x)(f \hat{\otimes} a \hat{\otimes} k)\in \mathcal{K}_{\mathcal{L}}\hat{\otimes}B\hat{\otimes}\mathcal{K}$. And for any $T \in \mathbb{C}[\Gamma,\mathcal{A}(\mathcal{H})\hat{\otimes}B]^{\Gamma} $, choose a non-negative integer $n_{0}$ such that for all $g,h \in \Gamma$, there exists $T'_{g,h}\in \mathcal{A}(W_{n_{0}}(g))\hat{\otimes}B\hat{\otimes}\mathcal{K}$ satisfying
$$
T_{g,h}=\beta_{n_{0},g}(T'_{g,h}),
$$
where $\beta_{n_{0},g}$ extends the inclusion $\mathcal{A}(W_{n_{0}}(g))\to \mathcal{A}(\mathcal{H})$ defined in \eqref{extend}. We then define the Dirac operator on the algebraic twisted Roe algebra by
$$
(\alpha_{t}(T))_{g,h}=\theta_{t}^{n_{0}}(g)(T'_{g,h}),\quad \text{for all } T\in \mathbb{C}[\Gamma,\mathcal{A}(\mathcal{H})\hat{\otimes}B]^{\Gamma} , g,h \in \Gamma.
$$
Proposition 4.2 in \cite{higson-kasparov-trout-1998} shows that for a different choice of $n_{0}$, the elements $\alpha_{t}(T)$ converge to the same limit in norm as $t \to \infty$.

\begin{lemma}
The element $\alpha_{t}(T)$ is $\Gamma$-invariant, i.e., $\alpha_{t}(T) \in \mathbb{C}[\Gamma,\mathcal{K}_{\mathcal{L}}\hat{\otimes}B]^{\Gamma} $.
\end{lemma}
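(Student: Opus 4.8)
The plan is to verify that $\alpha_{t}(T)$ satisfies the four defining conditions of $\mathbb{C}[\Gamma,\mathcal{K}_{\mathcal{L}}\hat{\otimes}B]^{\Gamma}$. That each entry $(\alpha_{t}(T))_{g,h}=\theta^{n_{0}}_{t}(g)(T'_{g,h})$ actually lies in $\mathcal{K}_{\mathcal{L}}\hat{\otimes}B\hat{\otimes}\mathcal{K}$ is Lemma 5.8 of \cite{higson-kasparov-trout-1998}, already quoted, so it remains to check conditions (1)--(4). I would begin by fixing once and for all an integer $n_{0}$ together with lifts $T'_{g,h}\in\mathcal{A}(W_{n_{0}}(g))\hat{\otimes}B\hat{\otimes}\mathcal{K}$ satisfying $T_{g,h}=\beta_{n_{0},g}(T'_{g,h})$; because $W_{k}(\gamma g)=\gamma W_{k}(g)$ for every $k$, the same $n_{0}$ works simultaneously for all the $\Gamma$-translates of $T$.

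Conditions (1)--(3) will be inherited from $T$. The extension maps $\beta_{n_{0},g}\colon\mathcal{A}(W_{n_{0}}(g))\to\mathcal{A}(\mathcal{H})$ are injective $*$-homomorphisms of $C^{*}$-algebras, hence isometric, so $\|T'_{g,h}\|=\|T_{g,h}\|$ and $T'_{g,h}=0$ exactly when $T_{g,h}=0$. Since each $\theta^{n_{0}}_{t}(g)$ is a $*$-homomorphism, it is contractive, giving $\|(\alpha_{t}(T))_{g,h}\|\le\|T_{g,h}\|\le M$ (condition (1)); and the zero-pattern of $\alpha_{t}(T)$ coincides with that of $T$, so the row/column finiteness (2) and the propagation bound (3) hold with the same constants $L$ and $r_{1}$ as for $T$.

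The substance of the lemma is the equivariance (4), namely $(\alpha_{t}(T))_{\gamma g,\gamma h}=\gamma\cdot(\alpha_{t}(T))_{g,h}$, and for this I would establish two compatibilities. First, the extension maps are $\Gamma$-equivariant, $\gamma\cdot\beta_{n_{0},g}(\xi)=\beta_{n_{0},\gamma g}(\gamma\cdot\xi)$, which is immediate from the naturality under the affine isometric action of the decompositions $W_{n_{0}+k}(g)=W_{n_{0}}(g)\oplus(\text{linear complement})$ and of the Clifford data $C_{ba}$; combined with $T_{\gamma g,\gamma h}=\gamma\cdot T_{g,h}$, this lets me take $T'_{\gamma g,\gamma h}=\gamma\cdot T'_{g,h}$. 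Second, the maps $\theta^{n_{0}}_{t}$ intertwine the $\Gamma$-actions, $\theta^{n_{0}}_{t}(\gamma g)(\gamma\cdot\zeta)=\gamma\cdot\theta^{n_{0}}_{t}(g)(\zeta)$; by continuity it suffices to check this on elementary tensors $f\hat{\otimes}a\hat{\otimes}k$, where it reduces to three identities: $f_{t}(B_{n_{0},t}(\gamma g))=\gamma\,f_{t}(B_{n_{0},t}(g))\,\gamma^{-1}$ (from $B_{n_{0},t}(\gamma g)=\gamma B_{n_{0},t}(g)\gamma^{-1}$ and functional calculus), $M_{(\gamma\cdot a)_{t}}=\gamma M_{a_{t}}\gamma^{-1}$ (because the affine isometry $\gamma$ carries the orthogonal projection onto $W_{n_{0}}(g)$ to that onto $\gamma W_{n_{0}}(g)=W_{n_{0}}(\gamma g)$, implements the Clifford action through its linear part $\widetilde\gamma$, and is compatible with the $t$-rescaling recentered at $j(g)$ since $j(\gamma g)=\gamma j(g)$), and the transport of the $B\hat{\otimes}\mathcal{K}$-factor by the ambient $\Gamma$-action on $B$. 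Multiplying these, $\theta^{n_{0}}_{t}$ is equivariant, and then $(\alpha_{t}(T))_{\gamma g,\gamma h}=\theta^{n_{0}}_{t}(\gamma g)(\gamma\cdot T'_{g,h})=\gamma\cdot\theta^{n_{0}}_{t}(g)(T'_{g,h})=\gamma\cdot(\alpha_{t}(T))_{g,h}$.

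I expect the main obstacle to be the careful bookkeeping behind $M_{(\gamma\cdot a)_{t}}=\gamma M_{a_{t}}\gamma^{-1}$: one must simultaneously track how the affine isometry moves the truncating subspace $W_{n_{0}}(g)$, how it acts on the spinor fibres through its linear part, and how it interacts with the $t$-rescaling, ensuring that every piece is consistently recentered at $j(g)$ (respectively $j(\gamma g)$). Everything else is formal once $B_{n,t}(x)$, $\beta_{n_{0},g}$ and $\theta^{n_{0}}_{t}(g)$ are known to behave equivariantly.
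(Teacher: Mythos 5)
Your proposal follows essentially the same route as the paper's own proof: reduce to the equivariance $T'_{\gamma g,\gamma h}=\gamma\cdot T'_{g,h}$, use $B_{n_0,t}(\gamma g)=\gamma B_{n_0,t}(g)\gamma^{-1}$ and the covariance $M_{(\gamma\cdot a)_t}=\gamma M_{a_t}\gamma^{-1}$ to conclude $\theta_t^{n_0}(\gamma g)(\gamma\cdot\zeta)=\gamma\cdot\theta_t^{n_0}(g)(\zeta)$, and combine. The extra verification of conditions (1)--(3), which the paper leaves implicit, is welcome, though note one imprecision: $\theta_t^{n_0}(g)$ is not a $*$-homomorphism for fixed $t$ (it is only asymptotically multiplicative, which is exactly why Lemma \ref{dirac} is about asymptotic morphisms); it is merely contractive, as a composition of functional calculus and a bounded multiplier, and that contractivity is all your bound $\|(\alpha_t(T))_{g,h}\|\le\|T_{g,h}\|$ actually needs.
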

\begin{proof}
We need to verify that 
$$
(\alpha_{t}(T))_{\gamma g, \gamma h}=\gamma \cdot (\alpha_{t}(T))_{g,h}, \quad \text{for all} \gamma, g,h \in \Gamma.
$$
First, recall that $T$ is $\Gamma$-invariant, and $T'_{\gamma g, \gamma h}=\gamma \cdot T'_{g,h}$ under the natural isomorphism induced by the affine isometry $\gamma: W_{n_{0}}(g) \to W_{n_{0}}(\gamma g)$.

Next, consider $\theta_{t}^{n_{0}}(x)$ constructed from the operator $B_{n_{0},t}(x)$ and the multiplier $M_{a_{t}}$. We have already shown that $B_{n_{0},t}(\gamma g) = \gamma B_{n_{0},t}(g) \gamma^{-1}$. By the covariance of the Clifford multiplication, the multiplier $M_{a_{t}}$ satisfies $M_{(\gamma \cdot a)_{t}} = \gamma M_{a_{t}} \gamma^{-1}$.

Combining these observations, we obtain
$$
\begin{aligned}
(\alpha_{t}(T))_{\gamma g, \gamma h} &= \theta_{t}^{n_{0}}(\gamma g)(T'_{\gamma g, \gamma h}) \\
&= \theta_{t}^{n_{0}}(\gamma g)(\gamma \cdot T'_{g,h}) \\
&= \gamma \cdot \theta_{t}^{n_{0}}(g)(T'_{g,h}) \\
&= \gamma \cdot (\alpha_{t}(T))_{g,h}.
\end{aligned}
$$
Thus, $\alpha _{t}(T)$ is $\Gamma$-equivariant, which completes the proof.
\end{proof}

\begin{lemma}\label{dirac}
  The Dirac map $\alpha_{t}$ extends to asymptotic morphisms
  $$
  \begin{aligned}
   \alpha: C^{*}(\Gamma,\mathcal{A}(\mathcal{H})\hat{\otimes}B)^{\Gamma}  &\to C^{*}(\Gamma,\mathcal{K}_{\mathcal{L}}\hat{\otimes}B)^{\Gamma} , \\
   \alpha_{\max}: C^{*}_{\max}(\Gamma,\mathcal{A}(\mathcal{H})\hat{\otimes}B)^{\Gamma}  &\to C^{*}_{\max}(\Gamma,\mathcal{K}_{\mathcal{L}}\hat{\otimes}B)^{\Gamma} .
  \end{aligned}
  $$
\end{lemma}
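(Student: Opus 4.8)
The plan is to verify that the family $\{\alpha_t\}_{t\geq 1}$ is uniformly bounded and asymptotically multiplicative on the algebraic twisted Roe algebra, and then invoke continuity and the universal property of the maximal completion to obtain both $\alpha$ and $\alpha_{\max}$. First I would reduce the computation to the ``local'' maps $\theta^n_t(x)\colon \mathcal{A}(W_n(x))\hat{\otimes}B\hat{\otimes}\mathcal{K}\to\mathcal{K}_{\mathcal{L}}\hat{\otimes}B\hat{\otimes}\mathcal{K}$. For a fixed operator $T\in\mathbb{C}[\Gamma,\mathcal{A}(\mathcal{H})\hat{\otimes}B]^\Gamma$, conditions (2)--(3) of Definition \ref{defalgebraictwisted} mean that only finitely many entries $T_{g,h}$ are nonzero up to $\Gamma$-translation, and condition (4) bounds $\operatorname{supp}(T_{g,h})$ uniformly; condition (5) lets us work $\Gamma$-equivariantly. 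Thus there is a single $n_0$ that works simultaneously for all entries, and the matrix structure in $(g,h)$ is controlled exactly as in the reduced and maximal Roe algebra norm estimates (Lemma \ref{normcontrol}): the number of nonzero entries in each row and column is bounded by $L$, so the operator-matrix norm on $E_{\mathcal{A}}$ (resp.\ on any Hilbert-space representation) is dominated by $L\sup_{g,h}\|(\alpha_t(T))_{g,h}\|$ up to the constant $C_s$ from Lemma \ref{normcontrol}.

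The heart of the argument is then the assertion, due to Higson--Kasparov--Trout \cite[\S5, \S7]{higson-kasparov-trout-1998} (see also Yu \cite[\S7]{Yu-2000}), that each $\theta^n_t$ is a (completely) contractive $*$-map and that the family $\{\theta^n_t(x)\}_t$ is asymptotically multiplicative, uniformly in $x$; the uniformity in $x$ is immediate here because $\theta^n_t(\gamma x)=\gamma\circ\theta^n_t(x)\circ\gamma^{-1}$ and $\Gamma$ acts isometrically, so everything reduces to the single base point. Concretely, uniform boundedness follows because $f_t(B_{n,t}(x))$ has norm $\leq\|f\|_\infty$ and $M_{a_t}$ has norm $\leq\|a\|_\infty$, giving $\|\theta^n_t(x)(f\hat{\otimes}a\hat{\otimes}k)\|\leq\|f\|_\infty\|a\|_\infty\|k\|$; extending to general elements of $\mathcal{A}(W_n(x))\hat{\otimes}B\hat{\otimes}\mathcal{K}$ by a standard graded tensor-product argument yields contractivity. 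For asymptotic multiplicativity one must show that $f_t(B_{n,t}(x))M_{a_t}f'_t(B_{n,t}(x))M_{a'_t}$ and $(ff')_t(B_{n,t}(x))M_{(aa')_t}$ differ by an operator whose norm tends to $0$ as $t\to\infty$: this is precisely the content of the HKT functional-calculus estimates, which rely on the essential self-adjointness and compact resolvent of $B_{n,t}(x)$ (recorded above) together with the fact that rescaling by $t^{-1}$ pushes the Clifford part $C_n$ and the multiplication by $a_t$ to commute asymptotically with the Dirac part. Assembling the entrywise estimates through the row/column bound gives $\|\alpha_t(ST)-\alpha_t(S)\alpha_t(T)\|\to 0$ for $S,T\in\mathbb{C}[\Gamma,\mathcal{A}(\mathcal{H})\hat{\otimes}B]^\Gamma$, in both the reduced norm on $\mathcal{L}(\ell^2(\Gamma)\hat{\otimes}B\hat{\otimes}\mathcal{K})$ and in every Hilbert-space $*$-representation, hence in the maximal norm.

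Finally, having established that $\{\alpha_t\}$ is uniformly bounded and asymptotically multiplicative (asymptotic additivity and $*$-compatibility are clear from linearity and the $*$-structure of $\theta^n_t$), I would conclude that $\alpha_t$ descends to an asymptotic morphism on the reduced completion $C^*(\Gamma,\mathcal{A}(\mathcal{H})\hat{\otimes}B)^\Gamma$ and, using Lemma \ref{normcontrol} exactly as in Proposition \ref{maximal-crossed-product}, on the maximal completion $C^*_{\max}(\Gamma,\mathcal{A}(\mathcal{H})\hat{\otimes}B)^\Gamma$, landing in $C^*(\Gamma,\mathcal{K}_{\mathcal{L}}\hat{\otimes}B)^\Gamma$ and $C^*_{\max}(\Gamma,\mathcal{K}_{\mathcal{L}}\hat{\otimes}B)^\Gamma$ respectively. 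The independence of the limit from the auxiliary choice of $n_0$ is \cite[Proposition 4.2]{higson-kasparov-trout-1998}. The main obstacle I anticipate is bookkeeping rather than conceptual: one must check that the constants in the HKT norm estimates for $\theta^n_t$ can be taken independent of the Hilbert module coefficient $B\hat{\otimes}\mathcal{K}$ and of the point $x\in\Gamma$, and that the row/column-finiteness constant $L$ together with the propagation bound $r_1$ interacts correctly with Lemma \ref{normcontrol} to give a genuine $*$-norm estimate valid for all representations simultaneously — this is where the bounded geometry of $X$ (Proposition \ref{boundedgeometry}) is used, via the uniform finiteness in condition (4) translated through the coarse embedding.
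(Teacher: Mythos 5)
Your overall architecture (uniform boundedness plus asymptotic multiplicativity of the entrywise maps $\theta^n_t$, then row/column-finiteness and Lemma \ref{normcontrol} to pass to reduced and maximal completions) matches the paper's, and the single-base-point commutator estimate you cite from Higson--Kasparov--Trout is indeed one of the two ingredients. But there is a genuine gap: you have misidentified what actually appears in the matrix product. The $(g,h)$-entry of $\alpha_t(T)\alpha_t(S)$ is $\sum_w \theta^n_t(g)(T'_{g,w})\,\theta^n_t(w)(S'_{w,h})$, with the two factors evaluated at \emph{different} base points $g$ and $w$, whereas the entry of $\alpha_t(TS)$ is $\theta^n_t(g)$ applied to the product. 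Your displayed target, $f_t(B_{n,t}(x))M_{a_t}f'_t(B_{n,t}(x))M_{a'_t}$ versus $(ff')_t(B_{n,t}(x))M_{(aa')_t}$, uses the same $x$ in both factors, so it only controls the commutator term. The equivariance $\theta^n_t(\gamma x)=\gamma\circ\theta^n_t(x)\circ\gamma^{-1}$ gives uniformity of constants over the $\Gamma$-orbit but does not relate $\theta^n_t(g)$ to $\theta^n_t(w)$ for a pair $g\ne w$ occurring inside one product entry.

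The paper's proof splits the difference into $A_{g,w,h}$ (your commutator term) and a second term $B_{g,w,h}=\theta^n_t(g)(T'_{g,w})\bigl(\theta^n_t(g)(S'_{w,h})-\theta^n_t(w)(S'_{w,h})\bigr)$, and the estimate of $B_{g,w,h}$ is where the geometry enters: finite propagation gives $d(g,w)\le R$, the coarse embedding gives $\|j(g)-j(w)\|\le\rho_+(R)$, hence the Clifford parts of $B_{n,t}(g)$ and $B_{n,t}(w)$ differ (up to lower order) by Clifford multiplication by the bounded vector $j(w)-j(g)$, so $t^{-1}\bigl(B_{n,t}(g)-B_{n,t}(w)\bigr)\to 0$ in norm and the functional calculi converge to each other. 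Without this step the argument does not close, and it is precisely the point at which the hypothesis on the embedding $j$ is used; you should add it.
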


\begin{proof}
We show that $\alpha$ and $\alpha_{\max}$ are asymptotic morphisms. This requires checking that $\{\alpha_t\}$ is uniformly bounded and asymptotically multiplicative.

The uniform boundedness follows from the fact that for a fixed $t$, $\theta_t^n(g)$ is a contractive map (composition of functional calculus and multiplication) and $T$ has finite propagation.

To show asymptotic multiplicativity, take $T, S \in \mathbb{C}[\Gamma, \mathcal{A}(\mathcal{H})\hat{\otimes}B]^{\Gamma} $. Choose $n$ large enough so that the coefficients of $T$, $S$ and $TS$ are all supported in $W_n$ (modulo $\Gamma$). We must prove
$$
\lim_{t \to \infty} \bigl\| \alpha_{t}(T)\alpha_{t}(S) - \alpha_{t}(TS) \bigr\| = 0.
$$

Let $R>0$ be such that $T_{g,w}=0$ and $S_{w,h}=0$ whenever $d(g,w)>R$ or $d(w,h)>R$. The $(g,h)$-entry of the difference is
$$
(\alpha_{t}(TS))_{g,h} - (\alpha_{t}(T)\alpha_{t}(S))_{g,h} 
= \sum_{w} \Bigl( \theta_t^n(g)(T'_{g,w} S'_{w,h}) - \theta_t^n(g)(T'_{g,w}) \theta_t^n(w)(S'_{w,h}) \Bigr).
$$

Since the sum is finite, it suffices to show that each term tends to $0$ uniformly. Write
$$
\Delta_{g,w,h}=A_{g,w,h}+B_{g,w,h},
$$
where
$$
\begin{aligned}
A_{g,w,h}&=\theta_t^n(g)(T'_{g,w} S'_{w,h})-\theta_t^n(g)(T'_{g,w})\theta_t^n(g)(S'_{w,h}),\\ 
B_{g,w,h}&=\theta_t^n(g)(T'_{g,w})\bigl(\theta_t^n(g)(S'_{w,h})-\theta_t^n(w)(S'_{w,h})\bigr).
\end{aligned}
$$

We shall estimate $A_{g,w,h}$ and $B_{g,w,h}$ separately.

\emph{Estimation of $A_{g,w,h}$.} Write $T'_{g,w}=f_1\hat{\otimes}a_1\hat{\otimes}k_1$ and $S'_{w,h}=f_2\hat{\otimes}a_2\hat{\otimes}k_2$. The term $A_{g,w,h}$ essentially involves the commutator $[M_{(a_1)_t}, (f_2)_t(B_{n,t})]$. Because $[M_{(a_1)_t}, B_{n,t}]$ equals the Clifford multiplication by $c(d(a_1)_t)$ and $d(a_1)_t = t^{-1}(da_1)_t$, we have $\|[M_{(a_1)_t}, B_{n,t}]\| = O(t^{-1})$. Using the Fourier representation
$$
f_2(t^{-1}B_{n,t}) = \frac{1}{2\pi} \int \hat{f_2}(\xi) e^{i\xi t^{-1}B_{n,t}}\,d\xi,
$$
one finds
$$
\bigl\| [M_{(a_1)_t}, f_2(t^{-1}B_{n,t})] \bigr\| \le C t^{-2}.
$$
Hence $\|A_{g,w,h}\|\to 0$ as $t\to\infty$.

\emph{Estimation of $B_{g,w,h}$.} We estimate $\| \theta_t^n(g)(u) - \theta_t^n(w)(u) \|$ for $u = S'_{w,h}$ with $d(g,w) \le R$. The operator $B_{n,t}(g)$ contains a Clifford part $C_{n,t}(g)=\sum_{k=n}^\infty t_k C_k$, where $C_k$ acts on $V_k(g)$ by Clifford multiplication of the vector $v_k$. Up to lower order terms, $C_{n,t}(g)$ approximates Clifford multiplication by $v-P_{W_n(g)}(v)$, the projection of $v-j(g)$ onto the orthogonal complement of $W_n(g)$.

When we replace $g$ by $w$, the difference in the Clifford parts essentially reduces to Clifford multiplication by the constant vector $j(w)-j(g)$:
$$
C_{n,t}(g)-C_{n,t}(w)\approx c\bigl(j(w)-j(g)\bigr).
$$
Because $d(g,w)\le R$, the vector $j(w)-j(g)$ has bounded norm. After scaling by $t^{-1}$ we obtain
$$
t^{-1}\bigl(B_{n,t}(g)-B_{n,t}(w)\bigr)=O(t^{-1}),
$$
which tends to $0$ in norm as $t\to\infty$. Since the functional calculus $D\mapsto f(D)$ is uniformly continuous on bounded sets of self-adjoint operators,
$$
\bigl\| f\bigl(t^{-1}B_{n,t}(g)\bigr)-f\bigl(t^{-1}B_{n,t}(w)\bigr) \bigr\|\longrightarrow 0.
$$
Hence $\|B_{g,w,h}\|\to 0$ as $t\to\infty$.

Combining the estimations for $A_{g,w,h}$ and $B_{g,w,h}$, we obtain the required asymptotic multiplicativity.
\end{proof}

Parallel to the Bott map $\beta_{*}$, we can define the Dirac map 
$$
  \alpha_{*}: K^{*}(C^{*}_{(\max)}(\Gamma,\mathcal{A}(\mathcal{H})\hat{\otimes}B)^{\Gamma} ) \to K_{*}( C^{*}_{(\max)}(\Gamma,\mathcal{S}\hat{\otimes}B)^{\Gamma} ).
$$

The following proposition is then immediate.
\begin{proposition}\label{2st thm}
  The composition $\alpha_{*} \circ \beta_{*}$ is the identity map on $K$-theory.
\end{proposition}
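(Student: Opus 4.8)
The plan is to reduce the claim to the corresponding statement of Higson--Kasparov--Trout \cite{higson-kasparov-trout-1998} by verifying that the Bott and Dirac asymptotic morphisms constructed above are, entry-by-entry, exactly the ones appearing in the finite-dimensional Bott periodicity theorem, uniformly in the $\Gamma$-orbit. First I would fix $x\in\Gamma$ (say a base point for a fundamental domain) and restrict attention to the directed system $\{W_n(x)\}$ with $\mathcal{H}=\overline{\bigcup_n W_n(x)}$; on this system, the composite $\theta^n_t(x)\circ\beta_{n,x}$ applied to an element $f\hat\otimes g\in\mathcal{S}\hat\otimes\mathcal{C}(W_n(x))$ is precisely the asymptotic morphism $\mathcal{A}(W_n(x))\to\mathcal{K}_{\mathcal{L}}(W_n(x))$ studied in \cite{higson-kasparov-trout-1998}, for which the main theorem there (the Bott periodicity / generalized Mehler formula) gives that the induced map on $K$-theory is inverse to the Bott map $\beta_t$ on $\mathcal{A}(W_n(x))\hat\otimes B$, up to the usual identification $K_*(\mathcal{K}_{\mathcal{L}}\hat\otimes B)\cong K_*(B)$ and $K_*(\mathcal{S}\hat\otimes B)\cong K_*(B)$. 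In particular $\alpha_t\circ\beta_t$ restricted to matrix entries is asymptotic to a $*$-homomorphism homotopic to the inclusion, hence induces the identity.

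The key steps, in order, are: (1) unwind the definitions to show $(\alpha_t\circ\beta_t)(T\hat\otimes f)_{g,h}=\theta^{n_0}_t(g)\bigl(T_{g,h}\hat\otimes C_{g,h}(f_t)\bigr)$, so that the composite acts diagonally in $(g,h)$ through the local maps $\theta^{n_0}_t(g)\circ(\text{Bott at }j(g))$; (2) invoke the finite-dimensional result of \cite{higson-kasparov-trout-1998} (Bott periodicity for $\mathcal{A}(V)$), which produces an explicit homotopy of asymptotic morphisms between $\alpha_t\circ\beta_t$ on $\mathcal{A}(W_{n_0}(g))\hat\otimes B\hat\otimes\mathcal{K}$ and the canonical inclusion into $\mathcal{K}_{\mathcal{L}}\hat\otimes B\hat\otimes\mathcal{K}$; (3) check that this homotopy is $\Gamma$-equivariant and has uniformly controlled propagation (in the $(g,h)$ variables it is supported on the diagonal, and the Clifford/Gaussian data at $j(\gamma g)$ is the $\gamma$-translate of that at $j(g)$), so that it assembles into a homotopy of asymptotic morphisms between $\alpha\circ\beta$ and the identity at the level of the (maximal and reduced) twisted Roe algebras; (4) pass to $K$-theory. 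Step (3) is where one must be a little careful: the homotopy from \cite{higson-kasparov-trout-1998} must be performed consistently across all $g$ using the rescaled operators $B_{n,t}(g)$, and one uses that $B_{n,t}(\gamma g)=\gamma B_{n,t}(g)\gamma^{-1}$ together with the covariance $M_{(\gamma\cdot a)_t}=\gamma M_{a_t}\gamma^{-1}$ already established above to guarantee equivariance of every stage of the homotopy; the norm estimates of Lemma \ref{normcontrol} then let one transport the homotopy to the maximal completion.

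The main obstacle I expect is step (3): ensuring that the Bott periodicity homotopy of \cite{higson-kasparov-trout-1998}, which is stated for a single Hilbert space with a fixed directed system, can be applied \emph{simultaneously and equivariantly} at every point $j(g)$ with propagation bounds that survive the completion process. Concretely, one needs the intermediate asymptotic morphisms in the homotopy to again be $\Gamma$-equivariant operators of finite propagation lying in (the path algebra over) $\mathbb{C}[\Gamma,\mathcal{K}_{\mathcal{L}}\hat\otimes B]^\Gamma$; this follows because every ingredient (the rescaled Dirac--Clifford operators $B_{n,t}(g)$, the multipliers $M_{a_t}$, the Gaussian vectors defining the inclusions $\mathcal{L}(W_n(g))\hookrightarrow\mathcal{L}(W_{n+1}(g))$) depends on $g$ only through the affine-isometric $\Gamma$-action, so translating by $\gamma$ relates the data at $g$ to that at $\gamma g$. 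Once this equivariance and propagation control is in hand, the finite-dimensional computation of \cite{higson-kasparov-trout-1998} gives $\alpha_*\circ\beta_*=\operatorname{id}$ verbatim, for both the maximal and the reduced algebras, and the commutativity of the two squares in \eqref{cmmu} with the $\lambda_*$ maps is automatic from naturality of the constructions in the quotient maps.
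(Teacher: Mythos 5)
Your proposal is correct and follows essentially the same route as the paper, which disposes of this proposition in one line by citing Proposition 7.7 of \cite{Yu-2000}; what you have written is precisely an unpacking of the proof of that proposition, namely identifying the local Bott--Dirac composition at each $j(g)$ with the Higson--Kasparov--Trout asymptotic morphism of \cite{higson-kasparov-trout-1998} and assembling the resulting homotopies equivariantly with controlled propagation (and with Lemma \ref{normcontrol} to pass to the maximal completion). The only loose phrase is that the composite is ``supported on the diagonal'' in $(g,h)$ --- the correct statement is that the Bott--Dirac data depends only on the index $g$, so the composite and the homotopy have the same propagation as $T$ --- but this does not affect the argument.
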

\begin{proof}
  This is a direct consequence of Proposition 7.7 in \cite{Yu-2000}.
\end{proof}

\subsection{The cutting-and-pasting argument}

In this subsection, we prove the middle horizontal map in \eqref{cmmu} is an isomorphism. Before proceeding with the detailed proof, we outline our general strategy.

Since $\mathcal{A}(\mathcal{H})$ is a proper algebra, we can employ a ``cutting-and-pasting'' technique similar to that used in \cite[Theorem 13.1]{Guentner-Higson-Trout-2000} to reduce the global problem to the stabilizers of the action. In the setting of \cite{Guentner-Higson-Trout-2000}, the group $\Gamma$ is assumed to act properly on the Hilbert space $\mathcal{H}$. Consequently, all stabilizers are finite groups. Since the $K$-amenability (and the Baum-Connes conjecture) for finite groups is well understood, the proof can be completed directly in that case.

In our context, however, we only assume that the quotient space $\Gamma/\Lambda$ admits a $\Gamma$-equivariant coarse embedding into $\mathcal{H}$. As a result, the stabilizers for the affine action on $\mathcal{H}$ are subgroups of $\Gamma$ commensurable with $\Lambda$. When $\Lambda$ is infinite, these stabilizers are typically infinite, which introduces new challenges. Note that while the stabilizers are quasi $K$-amenable, they are generally not amenable. This leads to a technical issue regarding the maximal norm. Since the maximal norm is not hereditary, the restriction of the global maximal norm to a local subalgebra does not necessarily coincide with the intrinsic maximal norm of that subalgebra. Therefore, we must handle the completions carefully in the following reduction.

Specifically, for any $r>0$, let $O_{r}(g)$ denote the open ball centered at $(j(g), 0)$ in the product space $\mathbb{R}_+  \times \mathcal{H}$:
\[
O_{r}(g) := B((j(g),0), r) = \left\{ (t,v) \in \mathbb{R}_+  \times \mathcal{H} \mid \|v-j(g)\|^2 + t^2 < r^2 \right\}.
\]
Since the action is $\Gamma$-equivariant and isometric on the affine space, we have $O_{r}(g) = g \cdot O_{r}(e)$. We define the $\Gamma$-invariant open set $O_{r}$ as the union of these orbits:
\[
O_{r} := \Gamma \cdot O_{r}(e) = \bigcup_{g \in \Gamma} O_{r}(g).
\]
Let $\mathcal{A}(\mathcal{H})_{O_{r}}$ denote the subalgebra of $\mathcal{A}(\mathcal{H})$ consisting of functions supported in the subset $O_{r}$. Correspondingly, let $\mathbb{C}[\Gamma, \mathcal{A}(\mathcal{H})_{O_r} \otimes B]^\Gamma$ be the subalgebra of $\mathbb{C}[\Gamma, \mathcal{A}(\mathcal{H}) \otimes B]^\Gamma$ consisting of operators supported in $\Gamma \times \Gamma \times O_{r}$. It follows directly from the definition that:
\[
\mathbb{C}[\Gamma, \mathcal{A}(\mathcal{H}) \otimes B]^\Gamma = \lim_{r \to \infty} \mathbb{C}[\Gamma, \mathcal{A}(\mathcal{H})_{O_r} \otimes B]^\Gamma,
\]
where the limit is taken over the directed family of open sets $\{O_r\}_{r>0}$.

However, a subtlety arises when considering the maximal completions. For each $r$, let
\[
\phi_{\max, r}: \mathbb{C}[\Gamma, \mathcal{A}(\mathcal{H})_{O_r} \otimes B]^\Gamma \to \mathcal{B}(\mathcal{H}_{\max, r})
\]
denote the universal representation of the local algebra. For $r' > r > 0$, the natural inclusion implies that the algebra at radius $r$ is a subalgebra of that at radius $r'$. However, the restriction of the universal representation
\[
\phi_{\max, r'} \big|_{r}: \mathbb{C}[\Gamma, \mathcal{A}(\mathcal{H})_{O_r} \otimes B]^\Gamma \to \mathcal{B}(\mathcal{H}_{\max, r'})
\]
may not coincide with the intrinsic maximal representation $\phi_{\max, r}$. Consequently, even assuming the $K$-amenability of the stabilizers, the cutting-and-pasting process does not automatically ensure that the diagram between maximal and reduced completions commutes or remains isometric at each step. To address these technical challenges, we need the following technical lemma.

\begin{lemma}\label{lem: sandwich}
Let $\Lambda$ be an a-T-menable discrete group and let $B$ be a $\Lambda$-$C^*$-algebra. Suppose that $\phi: \mathbb{C}[\Lambda, B]^\Lambda \to \mathcal{B}(\mathcal{H}_\phi)$ is a $*$-representation such that the associated $C^*$-completion, denoted by $C^*_\phi(\Lambda, B)^\Lambda$, lies between the maximal and reduced crossed products, i.e., the canonical quotient map $\lambda: C^*_{\max}(\Lambda, B)^\Lambda \to C^*_\phi(\Lambda, B)^\Lambda$ factors through $C^*_\phi(\Lambda, B)^\Lambda$:
\[
\lambda: C^*_{\max}(\Lambda, B)^\Lambda  \xrightarrow{\pi_\phi} C^*_\phi(\Lambda, B)^\Lambda \xrightarrow{\lambda_\phi} C^*(\Lambda, B)^\Lambda,
\]
such that $\lambda = \lambda_\phi \circ \pi_\phi$. Then the induced map  $(\pi_\phi)_*: K_*(C^*_{\max}(\Lambda, B)^\Lambda)\to K_*(C^*_{\phi}(\Lambda, B)^\Lambda)$ and $(\lambda_\phi)_*: K_*(C^*_{\phi}(\Lambda, B)^\Lambda)\to K_*(C^*(\Lambda, B)^\Lambda)$ are both isomorphisms.
\end{lemma}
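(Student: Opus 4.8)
The plan is to realise, for every $C^*$-completion of $\mathbb{C}[\Lambda,B]^\Lambda$ lying between the reduced and the maximal one, its $K$-theory as a retract of one fixed abelian group by means of a Bott--Dirac pair attached to an a-T-menable action of $\Lambda$, and then to run a short diagram chase in which the only non-formal input is that the \emph{global} map $\lambda_*$ is already an isomorphism. First I would fix the geometric data: since $\Lambda$ is a-T-menable it carries a metrically proper affine isometric action on a real Hilbert space $\mathcal{H}_\Lambda$. As in \cite{Higson-Kasparov-2001}, the induced $\Lambda$-action on $\mathbb{R}_+\times\mathcal{H}_\Lambda$, with the topology of Section~\ref{section3.2}, is topologically proper, so $\mathcal{A}(\mathcal{H}_\Lambda)$ — and hence $\mathcal{A}(\mathcal{H}_\Lambda)\hat{\otimes}B\hat{\otimes}\mathcal{K}$ — is a proper $\Lambda$-$C^*$-algebra. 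For proper coefficients the maximal and reduced crossed products coincide (see e.g.\ \cite{Guentner-Higson-Trout-2000}), and therefore so does every completion squeezed in between; in particular $\mathbb{C}[\Lambda,\mathcal{A}(\mathcal{H}_\Lambda)\hat{\otimes}B]^\Lambda$ admits a \emph{unique} $C^*$-completion $\mathcal{D}$, and I set $M:=K_*(\mathcal{D})$.

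Next I would set up the Bott map (Lemma~\ref{Bott-asymptotic}) and the Dirac map (Lemma~\ref{dirac}) built from $\mathcal{H}_\Lambda$. Both are given on the algebraic twisted Roe algebras by explicit formulas, are bounded for the reduced norm, and preserve finite propagation, so by Lemma~\ref{normcontrol} they extend to any completion $C^*_\psi(\Lambda,B)^\Lambda$ squeezed between the reduced and the maximal ones — in particular to $\psi\in\{\mathrm{red},\phi,\max\}$ — with the target of Bott and the source of Dirac being the single algebra $\mathcal{D}$ of Step~1. This yields maps $\beta^\psi_*\colon K_*(C^*_\psi(\Lambda,B)^\Lambda\hat{\otimes}\mathcal{S})\to M$ and $\alpha^\psi_*\colon M\to K_*(C^*_\psi(\Lambda,B)^\Lambda\hat{\otimes}\mathcal{S})$ (using on the Dirac side the identification of $C^*_\psi(\Lambda,\mathcal{K}_{\mathcal{L}}\hat{\otimes}B)^\Lambda$ with $C^*_\psi(\Lambda,B)^\Lambda\hat{\otimes}\mathcal{S}$ as in \eqref{cmmu}). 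Because all of these arise from the same formulas, they are compatible with the quotients of the lemma: $\beta^{\max}_*=\beta^{\phi}_*\circ(\pi_\phi)_*$, $\beta^{\phi}_*=\beta^{\mathrm{red}}_*\circ(\lambda_\phi)_*$, and dually $(\pi_\phi)_*\circ\alpha^{\max}_*=\alpha^{\phi}_*$, $(\lambda_\phi)_*\circ\alpha^{\phi}_*=\alpha^{\mathrm{red}}_*$ (everything tacitly tensored with $\mathcal{S}$). Moreover, the Bott-periodicity computation behind Proposition~\ref{2st thm} is implemented by asymptotic-morphism homotopies on the algebraic Roe algebras with uniformly controlled propagation; since the relevant target $\mathcal{D}$ does not change with $\psi$, the same computation gives $\alpha^\psi_*\circ\beta^\psi_*=\mathrm{id}$ for every $\psi$.

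The final step is a diagram chase. From $\mathrm{id}=\alpha^{\max}_*\circ\beta^{\max}_*=(\alpha^{\max}_*\circ\beta^{\phi}_*)\circ(\pi_\phi)_*$ it follows that $(\pi_\phi)_*$ is split injective, and from $\mathrm{id}=\alpha^{\phi}_*\circ\beta^{\phi}_*=(\alpha^{\phi}_*\circ\beta^{\mathrm{red}}_*)\circ(\lambda_\phi)_*$ that $(\lambda_\phi)_*$ is split injective. On the other hand, $\Lambda$ is a-T-menable, hence $K$-amenable \cite{Higson-Kasparov-2001}, so $\lambda_*=(\lambda_\phi)_*\circ(\pi_\phi)_*$ is an isomorphism; in particular $(\lambda_\phi)_*$ is surjective, so, being also injective, it is an isomorphism, whence $(\pi_\phi)_*=\bigl((\lambda_\phi)_*\bigr)^{-1}\circ\lambda_*$ is an isomorphism as well. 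Since tensoring with $\mathcal{S}$ is a functorial $K$-theory equivalence, the corresponding statements for $(\pi_\phi)_*$ and $(\lambda_\phi)_*$ on the $K$-theory of the Roe algebras themselves follow, which is the assertion.

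The main obstacle — and the whole reason the lemma is needed — is exactly the phenomenon stressed in the discussion preceding it: since the maximal (and the exotic $\phi$-) norm is not hereditary, a priori the twisted Roe algebra could depend on which completion is used on the coefficient side, and then the three rows of diagrams of the form \eqref{cmmu} would fail to be linked compatibly by $\alpha$ and $\beta$. Step~1 is what defuses this, since properness of $\mathcal{A}(\mathcal{H}_\Lambda)$ collapses the middle column to the single algebra $\mathcal{D}$; I expect this to be the conceptual crux. The residual technical point that still demands care is to check that the Bott and Dirac asymptotic morphisms, and the Bott-periodicity homotopy behind Proposition~\ref{2st thm}, genuinely descend to the intermediate completion $C^*_\phi(\Lambda,B)^\Lambda$ and not merely to $C^*_{\max}$ and $C^*_{\mathrm{red}}$; this is guaranteed by their construction through finite-propagation operators on the algebraic Roe algebras together with the uniform norm control of Lemma~\ref{normcontrol}.
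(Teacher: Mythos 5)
Your proposal is correct and follows essentially the same route as the paper's proof: use the proper affine isometric action of $\Lambda$ to make $\mathcal{A}(\mathcal{H}_\Lambda)\hat{\otimes}B$ a proper coefficient algebra so that all intermediate completions of the twisted algebra collapse to a single $C^*$-algebra, extend the Bott and Dirac asymptotic morphisms to the $\phi$-completion, and chase the resulting three-column diagram using $\alpha_*\circ\beta_*=\mathrm{id}$. The only cosmetic difference is the last step: you import the $K$-amenability of a-T-menable groups (Higson--Kasparov) to upgrade split injectivity of $(\pi_\phi)_*$ and $(\lambda_\phi)_*$ to isomorphisms, whereas the paper extracts surjectivity directly from the compatibility $(\pi_\phi)_*\circ\alpha^{\max}_*=\alpha^{\phi}_*$ and its reduced analogue, which you in fact already record.
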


\begin{proof}
  Since $\Lambda$ is a-T-menable, there exists an infinite-dimensional Hilbert space $\mathcal{H}_{\Lambda}$ admitting a proper affine isometric action of $\Lambda$. Parallel to the construction in Section \ref{section4.1}, we can define the Bott map and Dirac map for both maximal and reduced crossed products with coefficients in $B$:
  $$
  \begin{aligned}
   \beta_{*}:K_{*}(C^{*}_{\max}(\Lambda, B)^{\Lambda} \hat{\otimes}\mathcal{S}) \to K_{*}(C^{*}_{\max}(\Lambda,\mathcal{A}(\mathcal{H}_{\Lambda})\hat{\otimes}B)^{\Lambda}),\\
    \alpha_{*}:K_{*}(C^{*}_{\max}(\Lambda,\mathcal{A}(\mathcal{H}_{\Lambda})\hat{\otimes}B)^{\Lambda}) \to K_{*}(C^{*}_{\max}(\Lambda, \mathcal{S}\hat{\otimes}B)^{\Lambda}).
  \end{aligned}
  $$

Define the Hilbert module $E_\phi = \mathcal{H}_\phi \hat{\otimes} \mathcal{A}(\mathcal{H}_{\Lambda})$ which carries a diagonal action of $\Lambda$. Since we have the isomorphism $\mathbb{C}[\Lambda, B \hat{\otimes} \mathcal{A}(\mathcal{H}_{\Lambda})]^\Lambda\cong C_c(\Lambda, B \hat{\otimes} \mathcal{A}(\mathcal{H}_{\Lambda})\hat\otimes\mathcal K) \cong C_c(\Lambda, B\hat\otimes\mathcal K) \hat\otimes \mathcal{A}(\mathcal{H}_{\Lambda}),$
the algebra $\mathbb{C}[\Lambda, B \hat{\otimes} \mathcal{A}(\mathcal{H}_{\Lambda}))\cong \mathcal{A}(\mathcal{H}_{\Lambda})]^\Lambda$ admits a natural representation on $E_\phi$. This representation, in turn, determines the intermediate $C^*$-completion $C^{*}_{\phi}(\Lambda,\mathcal{A}(\mathcal{H}_{\Lambda})\hat{\otimes}B)^{\Lambda}$.

  The assumption that $\phi$ lies between the maximal and reduced crossed products is equivalent to the norm inequalities $\|\cdot\|_r \le \|\cdot\|_\phi \le \|\cdot\|_{\max}$. These inequalities are preserved for the extended representation $\phi_{\mathcal{A}}$. Thus, the associated completion $C^{*}_{\phi}(\Lambda,\mathcal{A}(\mathcal{H}_{\Lambda})\hat{\otimes}B)^{\Lambda}$ is also sandwiched between the maximal and reduced versions. Consequently, we have that
  $$
  \lambda_{\mathcal{A}}: C^{*}_{\max}(\Lambda,\mathcal{A}(\mathcal{H}_{\Lambda})\hat{\otimes}B)^{\Lambda} \xrightarrow{\pi_{\phi,\mathcal{A}}} C^{*}_{\phi}(\Lambda,\mathcal{A}(\mathcal{H}_{\Lambda})\hat{\otimes}B)^{\Lambda} \xrightarrow{\rho_{\phi,\mathcal{A}}} C^{*}(\Lambda,\mathcal{A}(\mathcal{H}_{\Lambda})\hat{\otimes}B)^{\Lambda},
  $$
  where $\lambda_{\mathcal{A}} = \rho_{\phi,\mathcal{A}} \circ \pi_{\phi,\mathcal{A}}$.
  
  Similarly, we can define the Bott map and Dirac map for this intermediate completion:
  $$
  \begin{aligned}
   \beta_{\phi,*}:K_{*}(C^{*}_{\phi}(\Lambda, B)^{\Lambda} \hat{\otimes}\mathcal{S}) \to K_{*}(C^{*}_{\phi}(\Lambda,\mathcal{A}(\mathcal{H}_{\Lambda})\hat{\otimes}B)^{\Lambda}),\\
    \alpha_{\phi,*}:K_{*}(C^{*}_{\phi}(\Lambda,\mathcal{A}(\mathcal{H}_{\Lambda})\hat{\otimes}B)^{\Lambda}) \to K_{*}(C^{*}_{\phi}(\Lambda, \mathcal{S}\hat{\otimes}B)^{\Lambda}).
  \end{aligned}
  $$
By definition, we have the following commutative diagrams on $K$-theory:
  \begin{equation}\label{cmmu-atmenable}
    \begin{tikzcd}
	{K_{*}(C^{*}_{\max}(\Lambda, B)^{\Lambda} \hat{\otimes}\mathcal{S})} & {K_{*}(C^{*}_{\phi}(\Lambda, B)^{\Lambda} \hat{\otimes}\mathcal{S})} & {K_{*}(C^{*}(\Lambda, B)^{\Lambda} \hat{\otimes}\mathcal{S})} \\
	{K_{*}(C^{*}_{\max}(\Lambda,\mathcal{A}(\mathcal{H}_{\Lambda})\hat{\otimes}B)^{\Lambda})} & {K_{*}(C^{*}_{\phi}(\Lambda,\mathcal{A}(\mathcal{H}_{\Lambda})\hat{\otimes}B)^{\Lambda})} & {K_{*}(C^{*}(\Lambda,\mathcal{A}(\mathcal{H}_{\Lambda})\hat{\otimes}B)^{\Lambda})} \\
	{K_{*}(C^{*}_{\max}(\Lambda, B)^{\Lambda} \hat{\otimes}\mathcal{S})} & {K_{*}(C^{*}_{\phi}(\Lambda, B)^{\Lambda} \hat{\otimes}\mathcal{S})} & {K_{*}(C^{*}(\Lambda, B)^{\Lambda} \hat{\otimes}\mathcal{S})}
	\arrow["{(\pi_{\phi})_{*}}"', from=1-1, to=1-2]
	\arrow["{\beta_{*}}"', from=1-1, to=2-1]
	\arrow["{(\rho_{\phi})_{*}}"', from=1-2, to=1-3]
	\arrow["{\beta_{\phi,*}}"', from=1-2, to=2-2]
	\arrow["{\beta_{*}}", from=1-3, to=2-3]
	\arrow["{(\pi_{\phi,\mathcal{A}})_{*}}"', from=2-1, to=2-2]
	\arrow["{\alpha_{*}}"', from=2-1, to=3-1]
	\arrow["{(\rho_{\phi,\mathcal{A}})_{*}}"', from=2-2, to=2-3]
	\arrow["{\alpha_{\phi,*}}"', from=2-2, to=3-2]
	\arrow["{\alpha_{*}}", from=2-3, to=3-3]
	\arrow["{(\pi_{\phi})_{*}}"', from=3-1, to=3-2]
	\arrow["{(\rho_{\phi})_{*}}"', from=3-2, to=3-3]
\end{tikzcd}
  \end{equation}

  Since the group $\Lambda$ acts on $\mathcal{H}_{\Lambda}$ properly, the algebra $C^{*}_{\max}(\Lambda,\mathcal{A}(\mathcal{H}_{\Lambda})\hat{\otimes}B)^{\Lambda}$ is isomorphic to $C^{*}(\Lambda,\mathcal{A}(\mathcal{H}_{\Lambda})\hat{\otimes}B)^{\Lambda}$, see \cite{Tu-1999}. In other words, the canonical quotient map $\lambda_{\mathcal{A}}$ is an isomorphism. As a result, both $\pi_{\phi,\mathcal{A}}$ and $\rho_{\phi,\mathcal{A}}$ are isomorphisms.

  By the same discussion in Proposition \ref{2st thm}, the compositions $\alpha_{\phi,*} \circ \beta_{\phi,*}$ and $\alpha_{*} \circ \beta_{*}$ are all identity maps on $K$-theory. Since the horizontal maps in the middle row $(\pi_{\phi,\mathcal{A}})_{*}$ and $(\rho_{\phi,\mathcal{A}})_{*}$ are isomorphisms, it follows from the commutative diagram that the corresponding maps in the top row, $(\pi_{\phi})_{*}$ and $(\rho_{\phi})_{*}$, must also be isomorphisms.
\end{proof}

\begin{lemma}\label{cut-lemma}
Let $x=(t,v)$ be a point in $\mathbb{R}_{+}\times \mathcal{H}$, $\Gamma_{x}\le \Gamma$ the stablizer of $x$. Then there exists a $\Gamma_x$-invariant neighborhood $U$ of $x$ such that $\Gamma\cdot U$ is homeomorphic the balanced product $\Gamma\times_{\Gamma_x}U$.
\end{lemma}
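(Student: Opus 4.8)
The plan is to reduce the homeomorphism assertion to a ``slice'' condition at $x$ and then to construct $U$ by hand, the only genuine input being the finiteness in Proposition \ref{Keyprop} together with the commensurability in Corollary \ref{keycoro}. Since $\Gamma$ acts on $\mathbb{R}_+\times\mathcal{H}$ trivially on the first coordinate and by the affine isometric action on the second, the stabilizer of $x=(t,v)$ is $\Gamma_x=\Gamma_v:=\{\gamma\in\Gamma:\gamma v=v\}$. For any open $U\ni x$ the tautological map $q\colon\Gamma\times_{\Gamma_x}U\to\Gamma\cdot U$, $[\gamma,u]\mapsto\gamma\cdot u$, is a continuous surjection; moreover the composite $\Gamma\times U\to\Gamma\times_{\Gamma_x}U\xrightarrow{\ q\ }\Gamma\cdot U$ is the map $(\gamma,u)\mapsto\gamma\cdot u$, which is open because $\Gamma$ is discrete and acts by homeomorphisms, while the first map is an open quotient; hence $q$ is open. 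Thus $q$ is a homeomorphism exactly when it is injective, i.e.\ exactly when $\gamma\cdot U\cap U\neq\emptyset$ forces $\gamma\in\Gamma_x$. It therefore suffices to produce an open, $\Gamma_x$-invariant neighborhood $U$ of $x$ with this slice property.

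The geometric heart of the matter is that the orbit $\Gamma\cdot v\subseteq\mathcal{H}$ is \emph{locally finite}: for any $D>0$ one has $\{\gamma:\|\gamma v-v\|<D\}=N_{v,D/2}$, which by Proposition \ref{Keyprop} is contained in finitely many left $\Lambda$-cosets, and hence, since $[\Lambda:\Lambda\cap\Gamma_v]<\infty$ by Corollary \ref{keycoro}, in finitely many left $\Gamma_v$-cosets $g_1\Gamma_v,\dots,g_m\Gamma_v$; as every element of $g_l\Gamma_v$ carries $v$ to $g_l v$, the set $\Gamma v\cap B(v,D)$ is finite. Now fix $C>t^2$, put $D=2\sqrt C+1$, let $\gamma_1 v,\dots,\gamma_m v$ enumerate the finitely many points of $\Gamma v\setminus\{v\}$ in $B(v,D)$, and define
$$U:=\Bigl\{(s,w)\in\mathbb{R}_+\times\mathcal{H}\ :\ s^2+\|w-v\|^2<C\ \text{ and }\ \|w-v\|<\|w-\gamma_l v\|\ \text{ for }l=1,\dots,m\Bigr\}.$$
Then $U$ is open for the given topology, since $s^2+\|w-v\|^2=(s^2+\|w\|^2)-2\langle w,v\rangle+\|v\|^2$ is continuous and each $\{\|w-v\|<\|w-\gamma_l v\|\}=\{2\langle w,\gamma_l v-v\rangle<\|\gamma_l v\|^2-\|v\|^2\}$ is a weakly open half-space; it contains $x$ because $t^2<C$ and $\gamma_l v\neq v$; and it is $\Gamma_v$-invariant because each $\mu\in\Gamma_v$ fixes $v$ and is an isometry, so preserves $s^2+\|w-v\|^2$ and merely permutes the half-spaces (note that $\mu^{-1}\gamma_l v$ is again a point of $\Gamma v\setminus\{v\}$ at distance $\|\gamma_l v-v\|\le D$ from $v$).

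To verify the slice property, suppose $(s,w)\in U$ and $(s,\gamma w)\in U$; then $\|w-v\|<\sqrt C$ and $\|\gamma w-v\|<\sqrt C$. If $\gamma\notin\Gamma_v$, then $\gamma v\neq v$ and $\|\gamma v-v\|\le\|\gamma(v-w)\|+\|\gamma w-v\|<2\sqrt C<D$, so $\gamma v=\gamma_{l_1} v$ for some $l_1$; likewise $\|\gamma^{-1}v-v\|=\|v-\gamma v\|<D$, so $\gamma^{-1}v=\gamma_{l_2} v$ for some $l_2$. The $l_1$-inequality evaluated at $\gamma w$ gives $\|\gamma w-v\|<\|\gamma w-\gamma v\|=\|w-v\|$, whereas the $l_2$-inequality evaluated at $w$ gives $\|w-v\|<\|w-\gamma^{-1}v\|=\|\gamma w-v\|$; these contradict each other. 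Hence $\gamma\in\Gamma_v=\Gamma_x$, and $q$ is a homeomorphism.

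I expect the main obstacle to be topological. Because the $\mathbb{R}_+$-coordinate is not a continuous function on $\mathbb{R}_+\times\mathcal{H}$ (only $t^2+\|w\|^2$ and the weak functionals are), one cannot take $U$ to be a small norm-ball around $x$ and simply invoke the usual slice theorem for proper actions: any genuine neighborhood of $(t,v)$ with $t>0$ already contains points $(s,w)$ with $\|w-v\|$ of size comparable to $t$. The device above circumvents this by cutting $U$ out with Voronoi-type conditions $\|w-v\|<\|w-\gamma_l v\|$, which are honestly open here precisely because the unbounded term $\|w\|^2$ cancels in the difference, together with the single continuous ``radius'' $s^2+\|w-v\|^2$; the finiteness supplied by Proposition \ref{Keyprop} is exactly what makes finitely many half-spaces suffice.
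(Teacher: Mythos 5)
Your proof is correct, and at the level of strategy it runs parallel to the paper's: both arguments use Proposition~\ref{Keyprop} together with Corollary~\ref{keycoro} to show that the orbit $\Gamma\cdot v$ meets any ball $B(v,D)$ in only finitely many points, and then manufacture a $\Gamma_x$-invariant neighbourhood with the slice property $\gamma U\cap U\neq\emptyset\Rightarrow\gamma\in\Gamma_x$. Where you genuinely differ is in how that neighbourhood is produced. The paper takes a neighbourhood $W_1$ with compact closure meeting the orbit finitely often, \emph{asserts} the existence of a smaller $W_2$ with $gW_2\cap W_2\neq\emptyset$ if and only if $g\in\Gamma_x$ (``such $W_2$ exists by equivariant coarse embeddability of $X$ and Proposition~\ref{Keyprop}''), and then saturates $W_2$ under $\Gamma_x$. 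You instead write $U$ down explicitly as a Dirichlet--Voronoi domain: the continuous radius condition $s^2+\|w-v\|^2<C$ intersected with finitely many half-spaces $\|w-v\|<\|w-\gamma_l v\|$, which are open in the given topology precisely because the quadratic terms cancel, and you verify openness, $\Gamma_x$-invariance, and the slice property (via the two-sided $\gamma v$ versus $\gamma^{-1}v$ comparison) directly. This buys something real: in the Higson--Kasparov--Trout topology on $\mathbb{R}_+\times\mathcal{H}$ neither the $\mathbb{R}_+$-coordinate nor the norm distance to $v$ is continuous, so for $t>0$ one cannot simply shrink a norm ball around $x=(t,v)$ to separate $x$ from nearby orbit points; your construction is exactly the device that justifies the paper's asserted $W_2$, and since your $U$ is already $\Gamma_x$-invariant the saturation step becomes unnecessary. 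The one caveat you share with the paper is the appeal to Corollary~\ref{keycoro} for an arbitrary $v\in\mathcal{H}$, whose proof in the text is only written out for $v$ in the dense affine span of $h(X)$.
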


\begin{proof}
The space $\mathbb{R}_{+}\times \mathcal{H}$ admits a diagonal action of $\Gamma$ defined by $\gamma \cdot (t,v) = (t, \gamma\cdot v)$ for all $\gamma \in \Gamma$ and $(t,v) \in \mathbb{R}_{+}\times \mathcal{H}$. The stabilizer $\Gamma_{x}$ of the point $x=(t,v)$ coincides with the stabilizer $\Gamma_{v}$ of the vector $v$ in $\mathcal{H}$. By Corollary \ref{keycoro}, we show that $\Gamma_{x}=\Gamma_{v}$ is commensurable with $\Lambda$.
  
First, the orbit $\Gamma \cdot x$ is discrete. Indeed, $\Gamma\cdot x$ can be identified with $\Gamma/\Gamma_x$. For any neighbourhood $W_1\subseteq U \subseteq \mathbb R_+\times \mathcal H$ of $x$ with compact closure, there exists a sufficiently large $R>0$ and some $(0,v)\in \mathbb R_+\times \mathcal H$ such that $B((0,v),R)$ covers $\overline{W_1}$. By Proposition \ref{Keyprop}, $W_1\cap \Gamma\cdot x$ must be finite. Thus, we can take $W_2\subseteq W_1$ such that $g W_2\cap W_2\ne\emptyset$ if and only if $g\in\Gamma_x$. Such $W_2$ exists by equivariant coarse embeddability of $X$ and Proposition \ref{Keyprop}. Define $W_3=\bigcup_{g\in\Gamma_x}gW_2$. We then claim that $W_3\cap gW_3=\emptyset$ for any $g\notin \Gamma_x$. To see this, for any $g\notin \Gamma_x$, assume for a contradiction that $gW_3\cap W_3\ne \emptyset$. By definition, there exists $h_1, h_2\in\Gamma_x$ such that $gh_1W_2\cap h_2W_2\ne \emptyset$. Equivalently speaking, $h_2^{-1}gh_1W_2\cap W_2\ne \emptyset$. Since $g\notin \Gamma_x$, we conclude that $h_2^{-1}gh_1\notin\Gamma_x$. This leads to a contradiction to the fact that $gW_2\cap W_2$ is nonempty if and only if $g\in \Gamma_x$.

From the definition of $W_3$, it is direct to see that $W_3$ is $\Gamma_x$-invariant. Thus, $\Gamma\cdot W_3$ is homeomorphic to $\Gamma\times_{\Gamma_x}W_3$ since $g W_3\cap W_3=\emptyset$ for any $g\in\Gamma\setminus\Gamma_x$.  Taking $U=W_3$, we then finish the proof.
\end{proof}

\begin{remark}
An alternative approach to the above lemma involves the Schlichting completion $(G, H)$ of the pair $(\Gamma,\Lambda)$. As established in \cite{Clement-2023}, the quotient space $\Gamma/\Lambda$ admits a $\Gamma$-equivariant coarse embedding into a Hilbert space if and only if its Schlichting completion $G$ is a-T-menable. Consequently, $G$ admits a proper affine isometric action on a Hilbert space. By the Slice Lemma for proper actions (see, for instance, \cite[Lemma A.2.7]{willett2020higher}), there exists a $G$-slice for the action.  Restricting this action to $\Gamma$, we obtain a corresponding slice for the $\Gamma$-action that satisfies the requirements of Lemma \ref{cut-lemma}.
\end{remark}

From Lemma \ref{cut-lemma}, the ball $O_{r}(e)$ admits a finite open cover $\{ V_{i} \}_{i=1}^{n}$ with corresponding subgroups $\{ N_{i} \}_{i=1}^{n}$ satisfying the properties stated in the lemma. Consequently, $O_{r}$ admits the open cover $\{ \Gamma\cdot V_{i} \}_{i=1}^{n}$.

\begin{theorem}
  \label{1st thm}
  Let $(\Gamma, \Lambda)$ be a Hecke pair and $X = \Gamma/\Lambda$. If $X$ admits a $\Gamma$-equivariant coarse embedding into a Hilbert space and $\Lambda$ is a-T-menable, then for any $\Gamma$-$C^*$-algebra $B$, the canonical quotient map induces an isomorphism in $K$-theory 
  $$
  \lambda_{*}:K_{*}(C_{\max}^{*}(\Gamma,\mathcal{A}(\mathcal{H})\hat{\otimes}B)^{\Gamma} ) \to K_{*}(C^{*}(\Gamma,\mathcal{A}(\mathcal{H})\hat{\otimes}B)^{\Gamma} ).
  $$
\end{theorem}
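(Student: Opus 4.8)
The plan is to prove the theorem by a ``cutting-and-pasting'' Mayer--Vietoris induction over the directed family of $\Gamma$-invariant open sets $\{O_r\}_{r>0}$, reducing the global isomorphism statement to a local one on the slices $\Gamma\times_{\Gamma_x}U$ supplied by Lemma~\ref{cut-lemma}, and then invoking Lemma~\ref{lem: sandwich} to handle each slice. Concretely, I would first observe that, since $K$-theory commutes with the inductive limit $\mathbb{C}[\Gamma,\mathcal A(\mathcal H)\hat\otimes B]^\Gamma=\varinjlim_r\mathbb{C}[\Gamma,\mathcal A(\mathcal H)_{O_r}\hat\otimes B]^\Gamma$ (and this passes to both the reduced and maximal completions), it suffices to prove that
\[
\lambda_{*}\colon K_*\big(C^*_{\max}(\Gamma,\mathcal A(\mathcal H)_{O_r}\hat\otimes B)^\Gamma\big)\to K_*\big(C^*(\Gamma,\mathcal A(\mathcal H)_{O_r}\hat\otimes B)^\Gamma\big)
\]
is an isomorphism for every fixed $r>0$. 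Fixing $r$, I would use the finite open cover $\{\Gamma\cdot V_i\}_{i=1}^n$ of $O_r$ from Lemma~\ref{cut-lemma}, where each $V_i$ is an $N_i$-invariant slice with $N_i$ commensurable with $\Lambda$, and induct on $n$.

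The inductive step is a Mayer--Vietoris argument. Writing $O=\Gamma\cdot V_1$ and $O'=\bigcup_{i\ge 2}\Gamma\cdot V_i$, the ideals/subalgebras attached to $O$, $O'$, $O\cap O'$ and $O\cup O'$ fit into a semi-split extension of $C^*$-algebras, yielding a six-term Mayer--Vietoris sequence in $K$-theory both for the reduced and for the maximal completions; the canonical quotient maps $\lambda$ are compatible with these sequences, so by the five lemma it is enough to prove the isomorphism for each single slice $\Gamma\cdot V_i\cong\Gamma\times_{N_i}V_i$ and for intersections (which, being themselves finite unions of slices associated to smaller subgroups, are handled by the same induction — here one should be slightly careful that intersections of slices are again of slice type, or simply refine the cover so that this holds). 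For a single slice $\Gamma\times_{N}V$, the corresponding equivariant twisted Roe algebra is Morita equivalent / isomorphic to an induced algebra of the form $C^*_{?}(N, \mathcal A(V)\hat\otimes B)^{N}\otimes\mathcal K$, and since $\mathcal A(V)\cong C_0(\mathbb R_+\times V)\otimes\mathrm{Cl}(V^0)$ is a proper $N$-$C^*$-algebra (with $N$ acting with compact... rather, with finite-kernel properness on $V$ after the coarse embedding, $V$ being a bounded piece of $\mathcal H$), $C^*_{\max}$ and $C^*_r$ agree by Tu's result \cite{Tu-1999}. The only subtlety — and this is exactly the point of Lemma~\ref{lem: sandwich} — is that the norm on the slice subalgebra inherited from the \emph{global} maximal norm on $\mathbb C[\Gamma,\mathcal A(\mathcal H)\hat\otimes B]^\Gamma$ need not be the intrinsic maximal norm of the slice algebra; it only lies between the reduced and maximal norms of $N$. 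Lemma~\ref{lem: sandwich}, applied with $\Lambda$ replaced by $N$ (which is a-T-menable since it is commensurable with the a-T-menable $\Lambda$), shows that the comparison maps between the intrinsic-maximal, the inherited-intermediate, and the reduced completions of $C^*_{?}(N,\mathcal A(V)\hat\otimes B)^N$ are all $K$-theory isomorphisms, which gives the slice case.

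I expect the main obstacle to be the bookkeeping around the maximal norms in the Mayer--Vietoris step: one must check that the ideal inclusions $\mathbb C[\Gamma,\mathcal A(\mathcal H)_{O\cap O'}\hat\otimes B]^\Gamma\hookrightarrow\mathbb C[\Gamma,\mathcal A(\mathcal H)_{O}\hat\otimes B]^\Gamma$ remain ideal inclusions after completing in the global maximal norm, that the resulting extensions are semi-split so that the six-term sequence exists on the maximal side, and that the intermediate ``inherited'' completions appearing on the slices genuinely satisfy the sandwich hypothesis of Lemma~\ref{lem: sandwich} with respect to $N$ (rather than merely with respect to $\Gamma$). A clean way to organize this is: (i) record that for a proper $\Gamma$-$C^*$-algebra such as $\mathcal A(\mathcal H)$, cutting by a $\Gamma$-invariant open subset of $\mathbb R_+\times\mathcal H$ gives honest ideals at the algebraic level, and that the induced maximal-completion extensions are semi-split because the relevant completely positive splittings are built from partitions of unity subordinate to $\{O_r\}$ and to $\{\Gamma\cdot V_i\}$ and are automatically bounded in every $*$-representation by the propagation/support control in Definition~\ref{defalgebraictwisted}; (ii) for the slice $\Gamma\times_N V$, identify the inherited completion explicitly with the completion of $\mathbb C[N,\mathcal A(V)\hat\otimes B]^N$ in a $*$-representation lying between reduced and maximal — this is where the ``$\Gamma$-to-$N$'' descent must be argued carefully, using that any $*$-representation of the ambient $\Gamma$-algebra restricts to one of the $N$-subalgebra and dominates the reduced norm while being dominated by the maximal norm. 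Once (i) and (ii) are in place, the five lemma applied to the ladder of maximal and reduced Mayer--Vietoris sequences, together with Lemma~\ref{lem: sandwich} as the base case, closes the induction and proves the theorem.
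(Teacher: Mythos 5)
Your proposal follows essentially the same route as the paper's proof: reduce to the subalgebras supported on $O_r$ via inductive limits (with the same care about the non-hereditary maximal norm), cut $O_r$ by the finite cover $\{\Gamma\cdot V_i\}$ from Lemma~\ref{cut-lemma}, run Mayer--Vietoris plus the Five Lemma, identify each slice algebra with a crossed product $A_{V_i}\rtimes N_i$ via Green's imprimitivity, and finish with Lemma~\ref{lem: sandwich} applied to the a-T-menable group $N_i$. One claim in your slice step is wrong, however: $\mathcal{A}(V)\hat\otimes B$ is \emph{not} a proper $N$-$C^*$-algebra, and Tu's theorem does not apply to it directly. The slice $V=V_i$ is a neighborhood of a point $x$ fixed by $N_i=\Gamma_x$, and $N_i$ is commensurable with $\Lambda$, hence typically infinite; an infinite group fixing a point of $V$ cannot act properly on $V$, so the intrinsic maximal and reduced completions of $\mathbb{C}[N_i,\mathcal{A}(V_i)\hat\otimes B]^{N_i}$ need not coincide. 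Fortunately this claim is not needed: Lemma~\ref{lem: sandwich}, which you also invoke, already yields the $K$-theoretic isomorphisms for \emph{all} intermediate completions of an a-T-menable group with arbitrary coefficients (properness enters only inside its proof, for the auxiliary proper affine action on $\mathcal{H}_\Lambda$ furnished by a-T-menability). With that parenthetical deleted, your argument is the paper's.
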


\begin{proof}
Write $A_{U}=\mathcal{A}(\mathcal{H})_{U}\hat{\otimes}B$ for short, where $U$ is an open subset of $\mathbb{R}_{+}\times \mathcal{H}$.

For any $r < r'$, recall that the reduced twisted Roe algebra $C^{*}(\Gamma,A_{O_{r}})^{\Gamma}$ is concretely defined as a subalgebra of operators on the Hilbert module $E_{r} = \ell^2(\Gamma) \hat{\otimes} \mathcal{A}(\mathcal{H})_{O_{r}} \hat{\otimes} B \hat{\otimes} \mathcal{K}$. Since $\mathcal{A}(\mathcal{H})_{O_{r}}$ is a subalgebra of $\mathcal{A}(\mathcal{H})_{O_{r'}}$, the Hilbert module $E_{r}$ can be naturally viewed as a submodule of $E_{r'}$. It is direct to see that the reduced norm via the left regular representation is consistent with this inclusion. That is, for any operator $T \in \mathbb{C}[\Gamma,A_{O_{r}}]^{\Gamma}$, its norm as an operator on $E_{r}$ is identical to its norm as an operator on $E_{r'}$. Consequently, we have an isometric embedding
  $$
  \iota_{\mathrm{red}}^{r,r'}: C^*(\Gamma,A_{O_{r}})^{\Gamma} \hookrightarrow C^*(\Gamma,A_{O_{r'}})^{\Gamma}.
  $$
It then follows from Definition \ref{defalgebraictwisted} that $C^*(\Gamma,\mathcal{A}(\mathcal{H})\hat{\otimes}B)^{\Gamma} = \varinjlim C^*(\Gamma,A_{O_{r}})^{\Gamma}$. As $K$-theory commutes with inductive limits, we have
  $$
  K_{*}(C^{*}(\Gamma,\mathcal{A}(\mathcal{H})\hat{\otimes}B)^{\Gamma} ) \cong \lim_{r \to \infty} K_{*}(C^{*}(\Gamma,A_{O_{r}})^{\Gamma} ).
  $$
  
For the maximal norm, for any $r<r'$ and any operator $T$ in the algebraic algebra $\mathbb{C}[\Gamma,A_{O_{r}}]^{\Gamma}$, its norm in $C^{*}_{\max}(\Gamma,A_{O_{r'}})^{\Gamma}$, denoted by $\|T\|_{\max, O_{r'}}$, is defined to be the supremum of all $*$-representations of $\mathbb{C}[\Gamma,A_{O_{r'}}]^{\Gamma}$. Since any such representation restricts to a representation of the subalgebra $\mathbb{C}[\Gamma,A_{O_{r}}]^{\Gamma}$, the set of representations defining $\|T\|_{\max, O_{r'}}$ is a subset of those defining $\|T\|_{\max, O_{r}}$. Consequently, we have the norm inequality:
  $$ 
  \|T\|_{\max, O_{r'}} \le \|T\|_{\max, O_{r}}. 
  $$
Thus, we denote $\phi$ the $*$-representation of $\mathbb{C}[\Gamma,A_{O_{r}}]^{\Gamma}$ on some $\mathcal{H}_{\phi}$ the restriction of the universal representation of $\mathbb{C}[\Gamma,A_{O_{r'}}]^{\Gamma}$, and denote $C^{*}_{\phi}(\Gamma,A_{O_{r}})^{\Gamma}$ the completion under the norm $\|\cdot \|_{\mathcal{B}(\mathcal{H}_{\phi})}$. Then, for any $T \in \mathbb{C}[\Gamma,A_{O_{r}}]^{\Gamma}$, we have $\|T\|_{\mathcal{B}(\mathcal{H}_{\phi})} = \|\iota^{r,r'}(T)\|_{\max, r'}$, where $\iota^{r,r'}$ denotes the natural inclusion 
  $$
  \iota^{r,r'}: \mathbb{C}[\Gamma,A_{O_{r}}]^{\Gamma} \hookrightarrow \mathbb{C}[\Gamma,A_{O_{r'}}]^{\Gamma}.
  $$
  By the universal property of maximal completions, there exists a $*$-homomorphism $\pi_{\phi}$ from $C^{*}_{\max}(\Gamma,A_{O_{r}})^{\Gamma}$ to $C^{*}_{\phi}(\Gamma,A_{O_{r}})^{\Gamma}$. Therefore, we obtain a $*$-homomorphism
  $$
  \iota_{\max}^{r,r'}: C^{*}_{\max}(\Gamma,A_{O_{r}})^{\Gamma} \xrightarrow{\pi_{\phi}} C^{*}_{\phi}(\Gamma,A_{O_{r}})^{\Gamma} \hookrightarrow C^{*}_{\max}(\Gamma,A_{O_{r'}})^{\Gamma}.
  $$
It follows that $C^{*}_{\max}(\Gamma,\mathcal{A}(\mathcal{H})\hat{\otimes}B)^{\Gamma} = \varinjlim C^{*}_{\max}(\Gamma,A_{O_{r}})^{\Gamma}$ under the $*$-homomorphisms $\{\iota^{r,r'}\}_{r'>r>0}$. As $K$-theory commutes with inductive limits, we have
  $$
  K_{*}(C_{\max}^{*}(\Gamma,\mathcal{A}(\mathcal{H})\hat{\otimes}B)^{\Gamma} ) \cong \lim_{r \to \infty} K_{*}(C_{\max}^{*}(\Gamma,\mathcal{A}(\mathcal{H})_{O_{r }}\hat{\otimes}B)^{\Gamma} ).
  $$

  Therefore, to prove the theorem, it suffices to show that for each $r>0$, the map
  $$
  (\lambda_{r})_*:K_{*}(C_{\max}^{*}(\Gamma,A_{O_{r}})^{\Gamma} ) \to K_{*}(C^{*}(\Gamma,A_{O_{r}})^{\Gamma} )
  $$
  is an isomorphism. 

  By the Lemma \ref{cut-lemma}, the set $O_{r}$ admits a finite open cover $\{ \Gamma\cdot V_{i} \}_{i=1}^{n}$ such that each $\Gamma\cdot V_{i}$ is homeomorphic to a balanced product $\Gamma\times_{N_i}V_i$, where each $N_i$ is a subgroup of $\Gamma$ commensurable with $\Lambda$. Using the Mayer--Vietoris sequence and the Five Lemma, we can reduce the problem to showing that for each $i$, the map
  $$  
  (\lambda_{V_i})_*:K_{*}(C^{*}_{\max}(\Gamma,A_{O_{r}})^{\Gamma} \mid_{\Gamma\cdot V_{i}} ) \to K_{*}(C^{*}(\Gamma,A_{O_{r}})^{\Gamma}\mid_{\Gamma\cdot V_{i}} ) 
  $$
  is an isomorphism. Here the notation $\mid_{\Gamma\cdot V_{i}}$ signifies restricting the support of operators to $\Gamma \times \Gamma \times (\mathbb{R}_{+}\times \Gamma\cdot V_{i})$ within the respective norms. This is essentially the completion of the algebraic equivariant Roe algebra $\mathbb{C}[\Gamma,A_{\Gamma \cdot V_{i}}]^{\Gamma}$ under the norms induced from $C_{\max}^{*}(\Gamma,A_{O_{r}})^{\Gamma}$ and $C^{*}(\Gamma,A_{O_{r}})^{\Gamma}$, respectively. Similarly, for reduced norms, we have an isomorphism
  $$
  C^{*}(\Gamma,A_{O_{r}})^{\Gamma}\mid_{\Gamma\cdot V_{i}} \cong C^{*}(\Gamma,A_{\Gamma \cdot V_{i}})^{\Gamma}.
  $$
  For the maximal norms, similar to above, we denote the restriction of the universal representation of $\mathbb{C}[\Gamma,A_{O_{r}}]^{\Gamma}$ by
$$\psi: \mathbb{C}[\Gamma,A_{\Gamma \cdot V_{i}}]^{\Gamma}\to \mathcal{B}(\mathcal{H}_{\psi}),$$
and denote by $C^{*}_{\psi}(\Gamma,A_{\Gamma \cdot V_{i}})^{\Gamma}$ the completion under the norm $\|\cdot \|_{\mathcal{B}(\mathcal{H}_{\psi})}$. Since the reduced norm is hereditary to subalgebras, there exists a canonical quotient
$$\rho_{\psi}: C^{*}_{\psi}(\Gamma,A_{\Gamma \cdot V_{i}})^{\Gamma}\cong C^{*}_{\max}(\Gamma,A_{O_{r}})^{\Gamma} \mid_{\Gamma\cdot V_{i}}  \to C^{*}(\Gamma,A_{\Gamma \cdot V_{i}})^{\Gamma},$$
By the universal property of maximal completions, there exists a canonical quotient map
$$\pi_{\psi}: C^{*}_{\max}(\Gamma,A_{\Gamma \cdot V_{i}})^{\Gamma}\to C^{*}_{\psi}(\Gamma,A_{\Gamma \cdot V_{i}})^{\Gamma}\cong C^{*}_{\max}(\Gamma,A_{O_{r}})^{\Gamma} \mid_{\Gamma\cdot V_{i}} .$$
Combining these two maps, we have that
  $$
  \lambda_{V_i}:C^{*}_{\max}(\Gamma,A_{\Gamma \cdot V_{i}})^{\Gamma} \xrightarrow{\pi_{\psi}} C^{*}_{\psi}(\Gamma,A_{\Gamma \cdot V_{i}})^{\Gamma} \xrightarrow{\lambda_{i}'}  C^{*}(\Gamma,A_{\Gamma \cdot V_{i}})^{\Gamma}.
  $$

    Since $\Gamma \cdot V_i \cong \Gamma \times_{N_i} V_i$, the algebra $A_{\Gamma \cdot V_i}$ is isomorphic to the induced algebra $\operatorname{Ind}^{\Gamma}_{N_i} (A_{V_i})$. According to Propositions \ref{reduced-crossed-product} and Proposition \ref{maximal-crossed-product} and Green's Imprimitivity Theorem (see \cite[Theorem 4.22]{Williams2007} for example), we have the following isomorphisms:
$$K_{*}(C^{*}_{\max}(\Gamma, A_{\Gamma \cdot V_i})^{\Gamma} ) \cong K_{*}(A_{\Gamma\cdot V_{i}}\rtimes _{\max} \Gamma)\cong K_{*}(A_{V_{i}}\rtimes _{\max} N_{i}),$$
$$K_{*}(C^{*}_{\psi}(\Gamma, A_{\Gamma \cdot V_i})^{\Gamma} ) \cong K_{*}(A_{\Gamma\cdot V_{i}}\rtimes _{\psi} \Gamma)\cong K_{*}(A_{V_{i}}\rtimes _{\psi} N_{i}),$$
$$K_{*}(C^{*}(\Gamma, A_{\Gamma \cdot V_i})^{\Gamma} ) \cong K_{*}(A_{\Gamma\cdot V_{i}}\rtimes_{\mathrm{r}} \Gamma)\cong K_{*}(A_{V_{i}}\rtimes_{\mathrm{r}} N_{i}).$$
  Therefore, the canonical quotient map $\lambda_{V_i}$ induces a map
  $$
  (\lambda_{V_i})_{*}: K_{*}(A_{V_i} \rtimes_{\max} N_i) \xrightarrow{(\pi_{\psi})_{*}}  K_{*}(A_{V_i} \rtimes_{\psi} N_i) \xrightarrow{(\lambda_{i}')_{*}} K_{*}(A_{V_i} \rtimes_{\mathrm{r}} N_i).
  $$

Recall that the Haagerup property is invariant under commensurability. Specifically, it is hereditary to subgroups, and conversely, the a-T-menability of a finite index subgroup implies the a-T-menability of the ambient group, \cite[Proposition 6.1.5]{CCJJV-2001}. This allows us to deduce the property for $\Gamma$ from its commensurable subgroups.
Thus, $N_{i}$ is a-T-menable for each $i$. By Lemma \ref{lem: sandwich}, the map $(\lambda_{V_i})_{*}$ is an isomorphism. This completes the proof. 
\end{proof}

\subsection{The proof of the main theorem}

Now we are in a position to prove the Theorem \ref{A}. 
\begin{theorem}\label{main theorem}
  Let $(\Gamma,\Lambda)$ be a Hecke pair of discrete groups. Assume that $\Lambda$ is a-T-menable and that the quotient space $X = \Gamma/\Lambda$ admits a $\Gamma$-equivariant coarse embedding into Hilbert space. Then for every $\Gamma$-$C^{*}$-algebra $B$ the canonical map
  \begin{equation}\label{main-thm-iso}
    K_{*}(C^{*}_{\max}(\Gamma,B)^{\Gamma}) \longrightarrow K_{*}(C^{*}(\Gamma,B)^{\Gamma})
  \end{equation}
is an isomorphism.
\end{theorem}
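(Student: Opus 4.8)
The plan is to obtain the theorem by a diagram chase in \eqref{cmmu}, using the two ingredients assembled above: the middle horizontal isomorphism of Theorem \ref{1st thm} and the relation $\alpha_{*}\circ\beta_{*}=\operatorname{id}$ of Proposition \ref{2st thm}. First I would verify that \eqref{cmmu} commutes on $K$-theory. The horizontal maps are the canonical quotients $\lambda$, and the vertical maps are the Bott and Dirac asymptotic morphisms of Lemmas \ref{Bott-asymptotic} and \ref{dirac}. Because $\beta_{t}$ and $\alpha_{t}$ are defined already at the level of the algebraic Roe and twisted Roe algebras, on which $\lambda$ acts as the identity, the asymptotic morphisms $\lambda\circ\beta_{\max}$ and $\beta\circ\lambda$ agree, as do $\lambda\circ\alpha_{\max}$ and $\alpha\circ\lambda$; hence each square of \eqref{cmmu} commutes after passing to $K$-theory.

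With commutativity in place, the proof is a short chase. By Theorem \ref{1st thm} the middle horizontal $\lambda_{*}$ is an isomorphism, and by Proposition \ref{2st thm} the vertical composite $\alpha_{*}\circ\beta_{*}$ down either outer column is the identity. For injectivity of the top $\lambda_{*}$: if $\lambda_{*}(x)=0$, then $\lambda_{*}(\beta_{*}(x))=\beta_{*}(\lambda_{*}(x))=0$, so $\beta_{*}(x)=0$ since the middle $\lambda_{*}$ is injective, whence $x=\alpha_{*}(\beta_{*}(x))=0$. For surjectivity: given $y$ in the target, use surjectivity of the middle $\lambda_{*}$ to choose $z$ with $\lambda_{*}(z)=\beta_{*}(y)$, and set $x=\alpha_{*}(z)$; then $\lambda_{*}(x)=\alpha_{*}(\lambda_{*}(z))=\alpha_{*}(\beta_{*}(y))=y$. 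Thus $\lambda_{*}\colon K_{*}(C^{*}_{\max}(\Gamma,B)^{\Gamma}\hat{\otimes}\mathcal{S})\to K_{*}(C^{*}(\Gamma,B)^{\Gamma}\hat{\otimes}\mathcal{S})$ is an isomorphism. It remains to drop the factor $\mathcal{S}=C_{0}(\mathbb{R})$: in the $\mathbb{Z}/2$-graded setting $\mathcal{S}$ is $KK$-equivalent to $\mathbb{C}$, and in any case $-\hat{\otimes}\,\mathcal{S}$ alters $K$-theory only by a natural degree shift; since this equivalence commutes with $\lambda$, the isomorphism descends to $\lambda_{*}\colon K_{*}(C^{*}_{\max}(\Gamma,B)^{\Gamma})\to K_{*}(C^{*}(\Gamma,B)^{\Gamma})$, which is \eqref{main-thm-iso}. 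Finally, via Propositions \ref{reduced-crossed-product} and \ref{maximal-crossed-product} with $\Delta=\Gamma$, this reads off as $K_{*}(B\rtimes\Gamma)\cong K_{*}(B\rtimes_{r}\Gamma)$, recovering Theorem \ref{A}.

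I expect no genuine obstacle at this stage: the argument is formal once Theorem \ref{1st thm} is granted. The substantive content is that theorem itself — the middle horizontal isomorphism — where the a-T-menability of $\Lambda$ and the cutting-and-pasting reduction to subgroups commensurable with $\Lambda$ (Corollary \ref{keycoro}) are essential, and where the failure of the maximal norm to be hereditary must be absorbed through the sandwiching argument of Lemma \ref{lem: sandwich}. The only mild care needed in the present proof is the routine bookkeeping around commutativity of \eqref{cmmu} and the removal of the suspension factor $\mathcal{S}$.
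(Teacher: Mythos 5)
Your proposal is correct and follows essentially the same route as the paper: the commutative diagram \eqref{cmmu}, the middle horizontal isomorphism from Theorem \ref{1st thm}, and the identity $\alpha_{*}\circ\beta_{*}=\operatorname{id}$ from Proposition \ref{2st thm}, followed by the standard diagram chase. You merely spell out two points the paper leaves implicit (commutativity of the squares at the level of the algebraic Roe algebras, and removal of the suspension factor $\mathcal{S}$), and both are handled correctly.
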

\begin{proof}
  We have constructed the following commutative diagram:
  \begin{equation}\label{cmmu2}
    \begin{tikzcd}
	{   K_{*}(C^{*}_{\max}(\Gamma, B)^{\Gamma} \hat{\otimes}\mathcal{S})} && {   K_{*}(C^{*}(\Gamma, B)^{\Gamma} \hat{\otimes}\mathcal{S})} \\
	{ K_{*}(C^{*}_{\max}(\Gamma,\mathcal{A}(\mathcal{H})\hat{\otimes}B)^{\Gamma} )} && { K_{*}(C^{*}(\Gamma,\mathcal{A}(\mathcal{H})\hat{\otimes}B)^{\Gamma} )} \\
	{   K_{*}(C^{*}_{\max}(\Gamma, B)^{\Gamma} \hat{\otimes}\mathcal{S})} && {   K_{*}(C^{*}(\Gamma, B)^{\Gamma} \hat{\otimes}\mathcal{S})}
	\arrow["{{\lambda_{*}}}", from=1-1, to=1-3]
	\arrow["{{\beta_{*}}}"', from=1-1, to=2-1]
	\arrow["{{\beta_{*}}}", from=1-3, to=2-3]
	\arrow["{{\lambda_{*}}}", from=2-1, to=2-3]
	\arrow["{\alpha_{*}}"', from=2-1, to=3-1]
	\arrow["{\alpha_{*}}", from=2-3, to=3-3]
	\arrow["{\lambda_{*}}", from=3-1, to=3-3]
\end{tikzcd}
  \end{equation}

  As we can see in the commutative diagram \eqref{cmmu2}, the middle horizontal map $\lambda_{*}$ is an isomorphism by Theorem \ref{1st thm}. Since the composition of vertical maps $\alpha_{*} \circ \beta_{*}$ is the identity by Proposition \ref{2st thm}, the main theorem follows immediately.
\end{proof}

As a direct consequence, we obtain the unconditional result for Hecke pairs where $\Lambda$ is a-T-menable, as discussed in the introduction.

\begin{corollary}
Let $1\to \Lambda\to \Gamma\to \Gamma/\Lambda\to 1$ be an extension of countable discrete groups. If both $\Lambda$ and $\Gamma/\Lambda$ are a-T-menable, then for any $\Gamma$-$C^*$-algebra $B$, the canonical quotient map induces an isomorphism in $K$-theory
  $$
  \lambda_*: K_*(B \rtimes \Gamma) \xrightarrow{\cong} K_*(B \rtimes_{r} \Gamma).
  $$
\end{corollary}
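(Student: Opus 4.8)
The plan is to deduce the corollary from Theorem~\ref{main theorem} applied to the Hecke pair $(\Gamma,\Lambda)$, so the only genuine task is to produce the geometric hypothesis---a $\Gamma$-equivariant coarse embedding of $X=\Gamma/\Lambda$---out of the a-T-menability of the quotient $\Gamma/\Lambda$. First I would record that when $\Lambda\trianglelefteq\Gamma$ the pair $(\Gamma,\Lambda)$ is automatically a Hecke pair, since $\Lambda^{\gamma}=\Lambda$ gives $[\Lambda:\Lambda\cap\Lambda^{\gamma}]=1$ for all $\gamma\in\Gamma$; hence by Proposition~\ref{boundedgeometry} the metric space $(X,d)$ of \eqref{X-metric} has bounded geometry. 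Writing $q:\Gamma\to\Gamma/\Lambda$ for the quotient homomorphism and $\ell(\bar w):=\inf\{|w|:w\in\Gamma,\ q(w)=\bar w\}$ for the induced proper length on $\Gamma/\Lambda$, one checks directly from \eqref{X-metric} that $d(g\Lambda,g'\Lambda)=\ell\big(q(g^{-1}g')\big)$: every element of the form $\lambda g^{-1}g'\lambda'$ has $q$-image $q(g^{-1}g')$ (giving ``$\ge$''), while taking $\lambda=e$ exhausts the left coset $g^{-1}g'\Lambda$ (giving ``$\le$''). Thus $(X,d)$ is isometric to $\Gamma/\Lambda$ with its quotient length metric, and in particular balls in $\Gamma/\Lambda$ are finite.

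Next I would construct the embedding from a-T-menability. Choose a metrically proper affine isometric action $\bar g\mapsto(\xi\mapsto U_{\bar g}\xi+b(\bar g))$ of $\Gamma/\Lambda$ on a Hilbert space $\mathcal{H}$, with $1$-cocycle $b$ normalised so that $b(e)=0$. Pulling back along $q$ makes $\Gamma$ act affinely isometrically on $\mathcal{H}$, and I set $h:X\to\mathcal{H}$, $h(g\Lambda):=b(q(g))$; this is well defined and, by the cocycle identity, $\Gamma$-equivariant. The same identity yields $\|h(g\Lambda)-h(g'\Lambda)\|=\|b(q(g^{-1}g'))\|$, so that, putting
$$
\rho_{+}(r):=\sup\{\,\|b(\bar w)\| : \ell(\bar w)\le r\,\},\qquad
\rho_{-}(r):=\inf\{\,\|b(\bar w)\| : \ell(\bar w)\ge r\,\},
$$
one obtains $\rho_{-}(d(x,y))\le\|h(x)-h(y)\|\le\rho_{+}(d(x,y))$ for all $x,y\in X$. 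Here $\rho_{+}(r)<\infty$ because $\{\bar w:\ell(\bar w)\le r\}$ is finite by bounded geometry, and $\rho_{-}(r)\to\infty$ as $r\to\infty$ because metric properness of the action means each sublevel set $\{\bar w:\|b(\bar w)\|\le C\}$ is finite, hence contained in some ball $\{\bar w:\ell(\bar w)\le R_{C}\}$. Therefore $h$ is a $\Gamma$-equivariant coarse embedding of $X$ into $\mathcal{H}$.

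With this in hand I would invoke Theorem~\ref{main theorem}: since $\Lambda$ is a-T-menable and $X=\Gamma/\Lambda$ admits a $\Gamma$-equivariant coarse embedding into $\mathcal{H}$, the canonical quotient induces an isomorphism $K_{*}(C^{*}_{\max}(\Gamma,B)^{\Gamma})\cong K_{*}(C^{*}(\Gamma,B)^{\Gamma})$ for every $\Gamma$-$C^{*}$-algebra $B$. It then remains to translate this back to crossed products. Taking $\Delta=\Gamma$ in Propositions~\ref{reduced-crossed-product} and~\ref{maximal-crossed-product}, we have $C^{*}_{\max}(\Gamma,B)^{\Gamma}\cong(B\rtimes\Gamma)\otimes\mathcal{K}(E)$ and $C^{*}(\Gamma,B)^{\Gamma}\cong(B\rtimes_{r}\Gamma)\otimes\mathcal{K}(E)$, compatibly with the quotient maps, so the Roe-algebra quotient is identified with $\lambda\otimes\operatorname{id}_{\mathcal{K}(E)}$. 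By Morita invariance of $K$-theory we conclude that $\lambda_{*}:K_{*}(B\rtimes\Gamma)\xrightarrow{\ \cong\ }K_{*}(B\rtimes_{r}\Gamma)$.

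I do not expect a substantial obstacle here: the statement is exactly the normal-subgroup specialisation of the already-established Theorem~\ref{A}, and the only step that needs any attention is the verification that the pulled-back orbit map $h$ is a genuine coarse embedding in the quotient metric. Even that reduces to identifying $d$ of \eqref{X-metric} with the quotient length of $\Gamma/\Lambda$ and using bounded geometry (Proposition~\ref{boundedgeometry}) to keep the control function $\rho_{+}$ finite-valued, so the work is essentially bookkeeping rather than a new idea.
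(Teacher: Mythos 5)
Your proposal is correct and follows essentially the same route as the paper: observe that normality makes $(\Gamma,\Lambda)$ a Hecke pair, pull back a proper affine isometric action of $\Gamma/\Lambda$ to $\Gamma$, take $h(g\Lambda)$ to be the associated cocycle to get a $\Gamma$-equivariant coarse embedding of $X$, and then apply Theorem~\ref{main theorem}. Your write-up is in fact more detailed than the paper's (explicit identification of the quotient metric and explicit control functions $\rho_{\pm}$), but there is no difference in approach.
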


\begin{proof}
Since $\Lambda$ is normal, the pair $(\Gamma, \Lambda)$ is a Hecke pair. Moreover, $\Gamma/\Lambda$ admits a proper affine isometric action on $\alpha:\Gamma/\Lambda\curvearrowright\mathcal H$. Set $b: \Gamma/{\Lambda}\to\mathcal H$ to be the $1$-cocycle associated with $\alpha$. We then define $\widetilde{\alpha}: \Gamma\curvearrowright\mathcal H$ by $\widetilde{\alpha}_{g}=\alpha_{g\Lambda}$ for any $g\in\Gamma$, where $g\Lambda\in\Gamma/\Lambda$ is the coset determined by $g$. Then
$$h:\Gamma/\Lambda\to\mathcal H,\qquad\text{defined by } g\Lambda\mapsto b_{g\Lambda}$$
is a $\Gamma$-equivariant coarse embedding associated with $\widetilde\alpha$.
\end{proof}

\section{The Novikov conjecture and the coarse Baum--Connes conjecture}


In Section \ref{section4}, we established the $K$-theoretic isomorphism \eqref{main-thm-iso} using the Dirac-dual-Dirac method. We observe that this approach can be adapted to address the Baum--Connes conjecture for Hecke pairs. In this section, rather than repeating the technical details, we briefly summarize the necessary geometric constructions (specifically, the localization algebras on the Rips complex) and outline the proof of the conjecture, utilizing the framework established in the previous section. We remark that the Baum--Connes conjecture for Hecke pairs in this geometric setting has been previously established by Dell'Aiera in \cite{Clement-2023} using different methods. Our goal in this section is to provide an alternative proof based explicitly on the Dirac-dual-Dirac method.

\subsection{Applications to the Baum--Connes conjecture}
The Rips complex $P_{d}(\Gamma)$ is a simplicial complex with vertex set $\Gamma$ and simplices of diameter $\le d$. As a proper, contractible metric space with bounded geometry, $P_d(\Gamma)$ admits a proper and cocompact $\Gamma$-action. We define the localization algebra $C^{*}_{L}(P_{d}(\Gamma), B)^{\Gamma}$ as the completion of bounded, uniformly continuous functions $g: [0,\infty) \to \mathbb{C}[P_d(\Gamma), B]^{\Gamma}$ such that $\operatorname{prop}(g(t))\to 0$ as $t\to\infty$, equipped with the supremum norm. The evaluation map $e(g)=g(0)$ induces the coarse assembly map:
$$
\mu: \lim_{d \to \infty} K_{*}(C^{*}_{L}(P_{d}(\Gamma), B)^{\Gamma}) \to \lim_{d \to \infty} K_{*}(C^{*}(P_{d}(\Gamma), B)^{\Gamma}) \cong K_{*}(C^{*}(\Gamma, B)^{\Gamma}).
$$
The isomorphism on the right follows from the $\Gamma$-cocompactness of $P_d(\Gamma)$, which ensures that the equivariant Roe algebra is Morita equivalent to the reduced crossed product $B \rtimes_r \Gamma$.

To implement the Dirac-dual-Dirac argument, we extend the construction to twisted versions. Fix a countable dense $\Gamma$-equivariant subset $Z_d \subset P_{d}(\Gamma)$ and a $\Gamma$-equivariant map $J: Z_d \to \Gamma$ determined by a bounded fundamental domain $\Delta_d$. The twisted Roe algebra $C^{*}(P_{d}(\Gamma), \mathcal{A}(\mathcal{H})\hat{\otimes}B)^{\Gamma}$ and its localization version $C^{*}_{L}$ are defined by replacing the index set $\Gamma$ with $Z_d$ in the algebraic twisted Roe algebra $\mathbb{C}[P_{d}(\Gamma), \mathcal{A}(\mathcal{H})\hat{\otimes}B]^{\Gamma}$. Crucially, the condition (4) in Definition \ref{defalgebraictwisted} is replaced by the support condition $\operatorname{supp}(T_{y,z}) \subseteq O_{r}(J(y))$ for all $y,z \in Z_d$.

The following lemma, which is a direct analogue of \cite[Lemma 5.5]{MR4422220}, allows for a local reduction of the assembly map via a finite open cover $\{ \Gamma \cdot V_{i} \}_{i=1}^n$ of the orbit $O_r$.

\begin{lemma}\label{decompose}
The inclusion-induced maps 
$$
(\iota_i)_*: K_*(C^*(P_d(N_i), \mathcal{A}_{V_i}\hat{\otimes}B)^{N_i}) \to K_*(C^*(P_d(\Gamma), \mathcal{A}\hat{\otimes}B)_{\Gamma \cdot V_i}^\Gamma)
$$
and its localization version $(\iota_{L,i})_{*}$ are isomorphisms.
\end{lemma}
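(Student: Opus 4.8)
The plan is to prove this lemma as the twisted, Rips-complex analogue of the slice decomposition for equivariant Roe algebras, following \cite[Lemma 5.5]{MR4422220}. The input is the homeomorphism $\Gamma\cdot V_i\cong\Gamma\times_{N_i}V_i$ from Lemma \ref{cut-lemma}, which (as already noted in the proof of Theorem \ref{1st thm}) gives $\mathcal A(\mathcal H)_{\Gamma\cdot V_i}\cong\operatorname{Ind}^\Gamma_{N_i}\bigl(\mathcal A(\mathcal H)_{V_i}\bigr)$ as $\Gamma$-$C^*$-algebras; here $\iota_i$ is the map induced by this slice inclusion. Concretely, at the algebraic level one fixes a set $\{g_{\alpha}\}$ of representatives for $\Gamma/N_i$ and extends an $N_i$-equivariant twisted Roe kernel $S=(S_{y,z})$ over $P_d(N_i)$ with coefficients in $\mathcal A(\mathcal H)_{V_i}\hat{\otimes}B$ to a $\Gamma$-equivariant kernel $\iota_i(S)$ over $P_d(\Gamma)$ by $\iota_i(S)_{g_{\alpha}y,\,g_{\alpha}z}:=g_{\alpha}\cdot S_{y,z}$, with all cross-coset entries zero; this produces an element supported over $\Gamma\cdot V_i$. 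First I would fix the ambient data — the dense $\Gamma$-invariant set $Z_d\subseteq P_d(\Gamma)$, the equivariant map $J$, and the bounded fundamental domain $\Delta_d$ — compatibly with the subcomplex $P_d(N_i)\subseteq P_d(\Gamma)$, and then check that $\iota_i$ and the ``restriction to the slice block'' map $T\mapsto(T_{y,z})_{y,z\in Z_d\cap P_d(N_i)}$ are mutually inverse $*$-homomorphisms of the algebraic twisted Roe algebras. The defining conditions transport verbatim: propagation is measured in the $P_d(\Gamma)$-metric, which is untouched by the $\mathcal H$-direction slicing; the uniform bound, row/column finiteness and equivariance are immediate; and the $\mathcal H$-support condition passes across once the cover is fine enough (see below).

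Granting the algebraic isomorphism, I would pass to the reduced completions exactly as in Theorem \ref{1st thm}: $\Gamma$-cocompactness of $P_d(\Gamma)$ together with Proposition \ref{reduced-crossed-product} makes $C^*(P_d(\Gamma),\mathcal A(\mathcal H)\hat{\otimes}B)^\Gamma_{\Gamma\cdot V_i}$ Morita equivalent to $\bigl(\mathcal A(\mathcal H)_{\Gamma\cdot V_i}\hat{\otimes}B\bigr)\rtimes_r\Gamma$, Green's imprimitivity theorem identifies the latter up to Morita equivalence with $\bigl(\mathcal A(\mathcal H)_{V_i}\hat{\otimes}B\bigr)\rtimes_r N_i$, and $N_i$-cocompactness of $P_d(N_i)$ with Proposition \ref{reduced-crossed-product} again identifies this with $C^*(P_d(N_i),\mathcal A(\mathcal H)_{V_i}\hat{\otimes}B)^{N_i}$; the composite equivalence is (a stabilization of) $\iota_i$, so $(\iota_i)_*$ is an isomorphism. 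For the localization version I would note that $C^*_L(P_d(\Gamma),\mathcal A(\mathcal H)\hat{\otimes}B)^\Gamma_{\Gamma\cdot V_i}$ is the algebra of bounded uniformly continuous paths $t\mapsto g(t)$ of such kernels with $\operatorname{prop}(g(t))\to 0$, and the slice correspondence above is manifestly compatible with the time parameter and with the vanishing-propagation condition — again because propagation lives in the $P_d(\Gamma)$-direction — so it restricts to an isomorphism of localization algebras, giving that $(\iota_{L,i})_*$ is an isomorphism as well.

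The step I expect to be the main obstacle is not conceptual but a matter of bookkeeping: one must check that the twisting-support condition $\operatorname{supp}(T_{y,z})\subseteq O_r(J(y))$ is compatible with restriction to the slice, i.e. that a $\Gamma$-equivariant operator whose $\mathcal H$-support lies in $\Gamma\cdot V_i$ in fact has each individual coefficient $T_{y,z}$ supported over a single translate $g_{\alpha}V_i$ rather than spread over several cosets. This is where the fineness of the cover matters: the $V_i$ come from Lemma \ref{cut-lemma} with the separation property $gW_3\cap W_3=\emptyset$ for $g\notin N_i$, and $J$ is built from a bounded fundamental domain $\Delta_d$, so after possibly shrinking the $V_i$ one obtains the required single-translate support; this is the one genuinely local point, and it is handled as in \cite[Lemma 5.5]{MR4422220}. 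Once it is in place I would carry out the slice bookkeeping once for the twisted Roe algebra and simply quote it for the localization algebra, the two arguments differing only by the inert extra parameter $t\in[0,\infty)$.
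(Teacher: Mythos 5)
Your overall route -- identify $\mathcal{A}(\mathcal{H})_{\Gamma\cdot V_i}\hat{\otimes}B$ with $\operatorname{Ind}_{N_i}^{\Gamma}(\mathcal{A}_{V_i}\hat{\otimes}B)$ via Lemma \ref{cut-lemma}, then chain the cocompact Roe-algebra/crossed-product identifications with Green's imprimitivity theorem -- is exactly the paper's strategy (the paper gives no separate proof of this lemma, citing the analogue in \cite{MR4422220} and relying on the same argument it spells out inside Theorem \ref{1st thm}). However, one step as you state it would fail: the extension map $S\mapsto\iota_i(S)$ and the block restriction $T\mapsto(T_{y,z})_{y,z\in Z_d\cap P_d(N_i)}$ are \emph{not} mutually inverse $*$-isomorphisms of the algebraic twisted Roe algebras. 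The two algebras have different index sets: an element of $\mathbb{C}[P_d(\Gamma),\mathcal{A}_{\Gamma\cdot V_i}\hat{\otimes}B]^{\Gamma}$ may have nonzero entries $T_{y,z}$ with $y\in P_d(N_i)$ and $z$ a bounded distance away but outside $P_d(N_i)$ (the support condition constrains only the $\mathcal{H}$-coefficient, not the indices), and such entries are killed by your restriction and never produced by your extension. The correct relationship is a full-corner inclusion/Morita equivalence, which is in fact all that your second paragraph uses, so the error is repairable -- but the sentence ``mutually inverse $*$-homomorphisms'' should be replaced by the statement that $\iota_i$ identifies the small algebra with a full corner of the large one (or one should argue directly at the level of the Morita equivalences of the completions, as the paper does).

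The second soft spot is the localization version, which you dismiss as bookkeeping with an inert parameter $t$. The $K$-theory of a localization algebra is a (representable, equivariant) $K$-homology group of the underlying space, and here the two sides live over genuinely different spaces: the left-hand side is $K$-homology of $P_d(N_i)$ as an $N_i$-space, while the right-hand side, after the induction--restriction adjunction on coefficients, is $K$-homology of all of $P_d(\Gamma)$ viewed as a (proper but non-cocompact) $N_i$-space. The inclusion $P_d(N_i)\hookrightarrow P_d(\Gamma)$ induces the map, but for a fixed $d$ its being an isomorphism is not a formal consequence of the slice decomposition; one needs an additional input -- strong Lipschitz homotopy invariance of localization algebras together with cofinality of $P_d(N_i)$ among $N_i$-cocompact subcomplexes, or a passage to the limit over $d$ (which is how the lemma is actually used in diagram \eqref{cmmu3} and in \cite[Lemma 5.5]{MR4422220}). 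You should either restrict the localized claim to the limit $d\to\infty$ or supply this homotopy-invariance step explicitly.
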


The Bott map $\beta$ and Dirac map $\alpha$ extend naturally to the Rips complex setting by substituting $P_d(\Gamma)$ for $\Gamma$ and $W(J(y))$ for $W(x)$. Since $\varinjlim_d P_d(\Gamma)$ models the universal space $\underline{E}\Gamma$, the direct limit of localization algebras recovers the topological side $K_{*}^{\Gamma}(\underline{E}\Gamma, B)$ \cite{Yu-1995, Yu-2000}. We thus arrive at our main result.

\begin{theorem}\label{BCC}
   Let $(\Gamma, \Lambda)$ be a Hecke pair of discrete groups. If $\Lambda$ is a-T-menable and the quotient space $X=\Gamma /\Lambda$ admits a $\Gamma$-equivariant coarse embedding into Hilbert space, then $\Gamma$ satisfies the Baum--Connes conjecture with coefficients in any $\Gamma$-$C^{*}$-algebra $B$.
\end{theorem}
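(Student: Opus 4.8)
The plan is to run the Dirac--dual--Dirac argument of Section~\ref{section4} one level higher, at the level of localization algebras, in complete parallel with the proof of Theorem~\ref{main theorem}: the ladder \eqref{cmmu2} is replaced by the analogous $3\times 2$ diagram in which the maximal/reduced dichotomy of each column is traded for the localization/evaluation dichotomy. Concretely, I would first assemble a commutative diagram whose three rows (read, after passing to $\varinjlim_d$, from the localization algebra $C^{*}_{L}(P_d(\Gamma),-)^{\Gamma}$ on the left to the Roe algebra $C^{*}(P_d(\Gamma),-)^{\Gamma}$ on the right) are the evaluation maps $e_*$; whose columns are the Rips-complex versions of the Bott map $\beta_*$ and the Dirac map $\alpha_*$ obtained from Lemmas~\ref{Bott-asymptotic} and~\ref{dirac} by substituting $P_d(\Gamma)$ for $\Gamma$, the dense $\Gamma$-set $Z_d$ for the index set $\Gamma$, and $W(J(y))$ for $W(x)$; and in which the two vertical composites $\alpha_*\circ\beta_*$ equal the identity by the localized analogue of Proposition~\ref{2st thm} (Yu's argument in \cite{Yu-2000}, run verbatim over $Z_d$ in place of $\Gamma$). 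Under the identifications $\varinjlim_d K_*(C^{*}_{L}(P_d(\Gamma),B)^{\Gamma})\cong RK^{\Gamma}_*(\underline{E}\Gamma,B)$ and $\varinjlim_d K_*(C^{*}(P_d(\Gamma),B)^{\Gamma})\cong K_*(B\rtimes_r\Gamma)$, the outer rows are precisely the Baum--Connes assembly map, so a diagram chase reduces the theorem to showing that the \emph{middle} row --- the twisted assembly map with coefficients in $\mathcal{A}(\mathcal{H})$ --- is an isomorphism.

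To establish that middle isomorphism I would repeat the cutting-and-pasting argument of Theorem~\ref{1st thm}, which is now simpler because there is no maximal norm to control. Filtering $\mathbb{R}_+\times\mathcal{H}$ by the $\Gamma$-invariant balls $O_r$ gives
\[
\varinjlim_d K_*\bigl(C^{*}_{(L)}(P_d(\Gamma),\mathcal{A}(\mathcal{H})\hat{\otimes}B)^{\Gamma}\bigr)\;\cong\;\varinjlim_{r}\varinjlim_d K_*\bigl(C^{*}_{(L)}(P_d(\Gamma),\mathcal{A}(\mathcal{H})_{O_r}\hat{\otimes}B)^{\Gamma}\bigr),
\]
and by Lemma~\ref{cut-lemma} each $O_r$ has a finite open cover by sets $\Gamma\cdot V_i\cong\Gamma\times_{N_i}V_i$ with $N_i$ commensurable with $\Lambda$ and $V_i$ an $N_i$-invariant open set. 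A Mayer--Vietoris and Five Lemma induction over this finite cover --- using the naturality of the evaluation map on the relevant ideals and quotients on both the localization and the Roe sides --- reduces the claim to showing that for each $i$ the evaluation map
\[
\varinjlim_d K_*\bigl(C^{*}_{L}(P_d(N_i),\mathcal{A}(\mathcal{H})_{V_i}\hat{\otimes}B)^{N_i}\bigr)\longrightarrow\varinjlim_d K_*\bigl(C^{*}(P_d(N_i),\mathcal{A}(\mathcal{H})_{V_i}\hat{\otimes}B)^{N_i}\bigr)
\]
is an isomorphism; here Lemma~\ref{decompose} and its localization version identify the restriction to $\Gamma\cdot V_i$ of the global algebras with the algebras induced from $N_i$ via Green's imprimitivity theorem, and $V_i$ being $N_i$-invariant makes $\mathcal{A}(\mathcal{H})_{V_i}\hat{\otimes}B$ a genuine $N_i$-$C^{*}$-algebra.

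Finally, each $N_i$ is commensurable with $\Lambda$ and hence a-T-menable, the Haagerup property being hereditary to subgroups and implied by its validity for a finite-index subgroup \cite[Proposition~6.1.5]{CCJJV-2001} (compare Corollary~\ref{keycoro}). By the theorem of Higson and Kasparov \cite{Higson-Kasparov-2001}, $N_i$ satisfies the Baum--Connes conjecture with coefficients, so the $N_i$-assembly map with coefficients in the $N_i$-$C^{*}$-algebra $\mathcal{A}(\mathcal{H})_{V_i}\hat{\otimes}B$ is an isomorphism --- which is exactly the displayed statement. Tracing this back through the cutting-and-pasting reduction and the diagram chase proves Theorem~\ref{BCC}.

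I expect the main obstacle to be infrastructural rather than conceptual. One must verify that all the identifications involved --- localization side $\leftrightarrow RK^{\Gamma}_*(\underline{E}\Gamma,-)$, Roe side $\leftrightarrow$ reduced crossed product, restriction to $\Gamma\cdot V_i\leftrightarrow$ induction from $N_i$ (Green imprimitivity), together with the Bott and Dirac asymptotic morphisms --- are compatible with evaluation at $0$, with the inductive limits over $d$ and over $r$, and with restriction along $N_i\hookrightarrow\Gamma$, so that the full $3\times 2$ ladder genuinely commutes. In particular the localization version of Lemma~\ref{decompose}, the Rips-complex twisted analogue of \cite[Lemma~5.5]{MR4422220}, must be checked to be natural with respect to the evaluation map, so that the Mayer--Vietoris reduction descends to the localization algebras. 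Once these naturality statements are in hand, the theorem follows formally from Higson--Kasparov together with the machinery already developed in Section~\ref{section4}.
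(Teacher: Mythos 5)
Your proposal is correct and follows essentially the same route as the paper: the same localization-algebra ladder \eqref{cmmu3} with the Rips-complex Bott and Dirac maps as columns, reduction of the twisted assembly map to the local maps for the stabilizers $N_i$ via the cutting-and-pasting cover $\{\Gamma\cdot V_i\}$ and Lemma~\ref{decompose}, and the Higson--Kasparov theorem applied to the a-T-menable groups $N_i$ commensurable with $\Lambda$. The paper's proof of Theorem~\ref{BCC} is a terser version of exactly this argument.
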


\begin{proof}
  The above construction yields the commutative diagram
  \begin{equation}\label{cmmu3}
    \begin{tikzcd}
	{\lim\limits_{d\to\infty}K_{*}(C_{L}^{*}(P_{d}(\Gamma),B)^{\Gamma}\hat{\otimes}\mathcal{S})} && {\lim\limits_{d\to\infty}K_{*}(C^{*}(P_{d}(\Gamma),B)^{\Gamma}\hat{\otimes}\mathcal{S})} \\
	{\lim\limits_{d\to\infty}K_{*}(C_{L}^{*}(P_{d}(\Gamma),\mathcal{A}(\mathcal{H})\hat{\otimes}B)^{\Gamma})} && {\lim\limits_{d\to\infty}K_{*}(C^{*}(P_{d}(\Gamma),\mathcal{A}(\mathcal{H})\hat{\otimes}B)^{\Gamma})} \\
	{\lim\limits_{d\to\infty}K_{*}(C_{L}^{*}(P_{d}(\Gamma),B)^{\Gamma}\hat{\otimes}\mathcal{S})} && {\lim\limits_{d\to\infty}K_{*}(C^{*}(P_{d}(\Gamma),B)^{\Gamma}\hat{\otimes}\mathcal{S})}
	\arrow[from=1-1, to=1-3]
	\arrow["{\beta_{*}}"', from=1-1, to=2-1]
	\arrow["{\beta_{*}}", from=1-3, to=2-3]
	\arrow["\mu_{\mathcal{A}}", from=2-1, to=2-3]
	\arrow["{\alpha_{*}}"', from=2-1, to=3-1]
	\arrow["{\alpha_{*}}", from=2-3, to=3-3]
	\arrow[from=3-1, to=3-3]
\end{tikzcd}
  \end{equation}

By Lemma \ref{decompose}, the isomorphism of the twisted assembly map $\mu_{\mathcal{A}}$ reduces to that of the local maps $\mu_{\mathcal{A}, N_i}$. Each $N_i$ is commensurable with $\Lambda$ and thus a-T-menable, which implies that $\mu_{\mathcal{A}, N_i}$ (and hence $\mu_{\mathcal{A}}$) is an isomorphism. Given the identity $\alpha_{*} \circ \beta_{*} = \text{id}$ \cite{Yu-2000}, the horizontal map $\mu$ is an isomorphism.
\end{proof}

\subsection{Applications to the strong Novikov conjecture}

In addition to the Dirac-dual-Dirac method, one can also employ the results of \cite{MR4422220} to establish the strong Novikov conjecture for $\Gamma$ with coefficients in any $\Gamma$-$C^{*}$-algebra $B$. The precise statement is as follows.
\begin{theorem}\label{strongNovikov}
  Let $(\Gamma, \Lambda)$ be a Hecke pair of discrete groups. If $\Lambda$ admits a coarse embedding into Hilbert space, and the quotient space $X=\Gamma /\Lambda$ admits a $\Gamma$-equivariant coarse embedding into Hilbert space, then $\Gamma$ satisfies the strong Novikov conjecture with coefficients in any $\Gamma$-$C^{*}$-algebra $B$.
\end{theorem}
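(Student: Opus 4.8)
The plan is to combine the factorization of the strong Novikov assembly map with the stability result from \cite{MR4422220}, rather than to re-run the full Dirac-dual-Dirac machinery from Section \ref{section4}. Recall that the strong Novikov conjecture with coefficients for $\Gamma$ asserts that the Baum--Connes assembly map
$$
\mu:K_*^{\Gamma}(\underline{E}\Gamma,B)\longrightarrow K_*(B\rtimes_r\Gamma)
$$
is (rationally) injective, equivalently that the maximal assembly map $\mu_{\max}:K_*^{\Gamma}(\underline{E}\Gamma,B)\to K_*(B\rtimes\Gamma)$ is injective. First I would set up the same geometric framework as in Section \ref{section4}: the Hecke pair $(\Gamma,\Lambda)$ is automatically such that $X=\Gamma/\Lambda$ has bounded geometry (Proposition \ref{boundedgeometry}), and by Corollary \ref{keycoro} the $\Gamma$-equivariant coarse embedding of $X$ can be arranged so that every stabilizer $\Gamma_v$ of the affine action on $\mathcal H$ is commensurable with $\Lambda$.

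The key difference from Theorem \ref{BCC} is that we now only assume $\Lambda$ coarsely embeds into Hilbert space, not that $\Lambda$ is a-T-menable, so we cannot expect $\mu_{\mathcal A,N_i}$ to be an \emph{isomorphism}; we only get \emph{injectivity}. Concretely, I would run the Dirac-dual-Dirac diagram \eqref{cmmu3} with the localization algebras on the Rips complex, but keeping track only of the assembly map into the reduced (or, by the commutativity with the canonical quotient, the maximal) twisted Roe algebra. The cutting-and-pasting argument via Lemma \ref{decompose} reduces the twisted assembly map $\mu_{\mathcal A}$ to the local maps $\mu_{\mathcal A,N_i}$, where each $N_i$ is commensurable with $\Lambda$. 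Since coarse embeddability into Hilbert space is invariant under commensurability (it is a coarse-geometric property, hence passes to finite-index subgroups and finite extensions), each $N_i$ coarsely embeds into Hilbert space. By Yu's theorem \cite{Yu-2000} — or more precisely its equivariant-with-coefficients refinement used in \cite{MR4422220} — the coarse Baum--Connes / Novikov assembly map for $N_i$ with coefficients is injective, so each $\mu_{\mathcal A,N_i}$ is injective; a Mayer--Vietoris and Five-Lemma argument as in the proof of Theorem \ref{1st thm} then shows $\mu_{\mathcal A}$ is injective. Feeding this through the diagram \eqref{cmmu3} together with the identity $\alpha_*\circ\beta_*=\mathrm{id}$ from \cite{Yu-2000} forces the horizontal assembly map $\mu$ in the top row to be injective, which is exactly the strong Novikov conjecture with coefficients for $\Gamma$.

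An alternative — and essentially the route the sentence preceding the theorem suggests — is to invoke the stability result of Deng \cite{MR4422220} directly: Deng proves that if $N$ and $G/N$ both coarsely embed into Hilbert space then $G$ satisfies the strong Novikov conjecture, and Dell'Aiera's generalization shows the Hecke-pair analogue via Schlichting completions. In that approach one would verify that the hypotheses of \cite[Theorem ...]{MR4422220} (or its Hecke-pair version) are met: $X=\Gamma/\Lambda$ coarsely embeds $\Gamma$-equivariantly, and $\Lambda$ coarsely embeds; then cite the theorem. I would present the self-contained Dirac-dual-Dirac version as the main proof and mention the citation as a remark, since it keeps the section uniform with the method used for Theorems \ref{A} and \ref{BCC}.

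The main obstacle I anticipate is the maximal-norm bookkeeping in the cutting-and-pasting step, exactly as in Theorem \ref{1st thm}: the restriction of the global maximal norm to a local subalgebra $\mathbb C[\Gamma,A_{\Gamma\cdot V_i}]^{\Gamma}$ need not be the intrinsic maximal norm of $N_i$, so one must pass through the intermediate completions $C^*_\psi$ and check that injectivity of the assembly map survives. Here, however, the situation is slightly easier than for Theorem \ref{1st thm}: since we only need \emph{injectivity} of $\mu$, it suffices to work with the maximal assembly map $\mu_{\max}$ throughout — the canonical quotient $\lambda_*$ composed with $\mu_{\max}$ is $\mu$, and injectivity of $\mu_{\max}$ implies the strong Novikov conjecture — so we never need the delicate statement that $\lambda_*$ is an isomorphism, only that the relevant local assembly maps are injective. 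The genuinely technical input is therefore confined to (i) the coarse embeddability descent along commensurability, which is routine, and (ii) the Mayer--Vietoris argument for injectivity of $\mu_{\mathcal A}$, which is parallel to, but strictly weaker than, the isomorphism statement already proved in Theorem \ref{1st thm}.
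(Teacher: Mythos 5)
Your main route has a genuine gap at the gluing step. You propose to deduce injectivity of the twisted assembly map $\mu_{\mathcal A}$ from injectivity of the local maps $\mu_{\mathcal A,N_i}$ by ``a Mayer--Vietoris and Five-Lemma argument as in the proof of Theorem \ref{1st thm}.'' But the Five Lemma does not propagate injectivity alone through a Mayer--Vietoris ladder: to conclude that the middle vertical map is injective one needs, besides injectivity on the pieces, \emph{surjectivity} of the vertical map on the overlaps one step earlier in the exact sequence. Coarse embeddability of $N_i$ only gives injectivity of the assembly map for $N_i$ (strong Novikov), not surjectivity, so the cut-and-paste argument that works for the isomorphism statement in Theorem \ref{1st thm} breaks down here. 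This is exactly the well-known failure of stability for ``mere injectivity'' that the introduction of the paper flags, and it is why the paper does \emph{not} run the argument you describe. Instead, following Deng \cite{MR4422220}, it introduces an auxiliary $\Gamma$-space $W=\prod_{N\in S_\Lambda}W_N$ built from the coarse embedding of $\Lambda$, with the two key properties that (i) each transformation groupoid $W\rtimes N_i$ is a-T-menable, so by Tu's theorem the local twisted assembly maps with coefficients $C(W)\hat{\otimes}\mathcal A(\mathcal H)\hat{\otimes}B$ are \emph{isomorphisms} (Lemma \ref{isomorphism with W}) --- which is what makes Mayer--Vietoris gluing legitimate --- and (ii) $W$ is $F$-contractible for finite $F\le\Gamma$, so that on the localization (topological) side one can return to the original coefficients via $\tilde c_{L,*}$ (Lemma \ref{C(W) iso}) and via the Bott isomorphism on localization algebras (Lemma \ref{localization iso}). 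Injectivity of $\mu_\Gamma$ then falls out of the diagram \eqref{strong Novikov cmmu} without ever invoking a Five Lemma for injectivity. The missing idea in your proposal is precisely this coefficient enlargement by $C(W)$.

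Your ``alternative'' route --- citing Deng's extension theorem or Dell'Aiera's Hecke-pair version directly --- is not a proof in the context of this paper: Dell'Aiera's \cite[Theorem 4.1]{Clement-2023} is essentially the statement being reproved, and the section's explicit purpose is to give an independent Dirac-dual-Dirac argument. Two smaller points: you are right that only injectivity is needed and that the maximal-norm bookkeeping is not the bottleneck here (the paper's proof of Theorem \ref{strongNovikov} works entirely with reduced completions); and your claim that coarse embeddability descends along commensurability is correct and is indeed used, but it feeds into the construction of $W_{N_i}$ rather than directly into a local Novikov statement.
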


To prove this, we construct a $\Gamma$-space $W$ such that for every subgroup $N \le \Gamma$ commensurable with $\Lambda$, the groupoid $W \rtimes N$ is a-T-menable, and for every finite $F \le \Gamma$, $W$ is $F$-contractible. 

Fix such an $N$ and pick coset representatives $S_N$ so that $\Gamma=\bigsqcup_{s\in S_N}sN$. Since $N$ admits a coarse embedding $h_N': N\to\mathcal{H}_0$, we extend it to $h_N: \Gamma\to\mathcal{H}_0$ by $h_N(sn)=h_N'(n)$. For a fixed $n\in N$, define the bounded function $f_n: \Gamma\to\mathbb{R}$ by
$$
f_n(\gamma)=\|h_N(\gamma n)-h_N(\gamma)\|^2.
$$
Let $A$ be the unital, commutative $\Gamma$-invariant $C^{*}$-subalgebra of $\ell^{\infty}(\Gamma)$ generated by all $f_n$, their right translates, $C_0(\Gamma)$, and constants. The spectrum $W'_N$ of $A$ is a compactification of $\Gamma$. Let $W_N$ be the space of probability measures on $W'_N$ equipped with the weak-$*$ topology. Considering the collection $S_\Lambda$ of all subgroups commensurable with $\Lambda$, we define the product space $W := \prod_{N \in S_\Lambda} W_N$ and equip it with the $\Gamma$-action $\gamma \cdot (w_N) = (w_{\gamma^{-1} N \gamma})$. The following results are established in \cite[Section 4.1]{MR4422220}. 
\begin{proposition}
For every $N\in S_\Lambda$, the transformation groupoid $W\rtimes N$ is a-T-menable. Moreover, for any finite subgroup $F\le\Gamma$, the space $W$ is $F$-contractible.
\end{proposition}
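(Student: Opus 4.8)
We only sketch the argument, which is \cite[Section~4.1]{MR4422220}; it splits into the two assertions, and the plan is as follows.

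For the a-T-menability of $W\rtimes N$: since $n^{-1}Nn=N$ for $n\in N$, the $N$-action on $W=\prod_{M\in S_{\Lambda}}W_{M}$ preserves the factor $W_{N}$ and the projection $p_{N}\colon W\to W_{N}$ is $N$-equivariant, so by the standard characterisation of a-T-menable groupoids (see \cite{Tu-1999}) it is enough to produce a continuous, proper, conditionally negative type function on $W_{N}\rtimes N$ and pull it back along $p_{N}$ (properness survives because $W$ is compact). On $W'_{N}\rtimes N$, whose unit space contains $\Gamma$ densely, I would set $\psi_{0}(\xi,n):=\widetilde{f_{n}}(\xi)$, where $\widetilde{f_{n}}\in C(W'_{N})$ is the continuous extension of $f_{n}$ --- the algebra $A$ is built exactly so that this extension, together with those of the matrix-coefficient functions $\gamma\mapsto\langle h_{N}(\gamma n)-h_{N}(\gamma),\,h_{N}(\gamma m)-h_{N}(\gamma)\rangle$, exist. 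At a point $\gamma\in\Gamma$ the conditional negative type inequality for $\psi_{0}$ reduces, after composing the relevant groupoid arrows, to
\[
\sum_{i,j}\lambda_{i}\lambda_{j}\,\bigl\|h_{N}(\gamma n_{i})-h_{N}(\gamma n_{j})\bigr\|^{2}\le 0,\qquad \textstyle\sum_{i}\lambda_{i}=0,
\]
which holds since $(a,b)\mapsto\|a-b\|^{2}$ is a kernel of negative type on $\mathcal{H}_{0}$; by continuity of $\psi_{0}$ the inequality extends to all of $W'_{N}$, while $\psi_{0}(\xi,e)=0$ and the symmetry $\psi_{0}(g)=\psi_{0}(g^{-1})$ are immediate from $f_{n}(\gamma)=f_{n^{-1}}(\gamma n)$. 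Next I would integrate: on $W_{N}\rtimes N$ set $\psi(\mu,n):=\int_{W'_{N}}\widetilde{f_{n}}\,d\mu$; integrating the pointwise inequality for $\psi_{0}$ against $\mu$ keeps $\psi$ of negative type, $\psi$ is weak-$*$ continuous, $N$ is discrete, and
\[
\psi(\mu,n)\ \ge\ \inf_{\gamma\in\Gamma}\|h_{N}(\gamma n)-h_{N}(\gamma)\|^{2}\ \ge\ \rho_{-}\!\left(|n|\right)^{2},
\]
using left-invariance of the metric on $\Gamma$ and the lower control function $\rho_{-}$ of $h'_{N}$. Since $\rho_{-}(r)\to\infty$ and balls in $N$ are finite, $\psi$ is proper; pulling back along $p_{N}$ finishes this half.

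For the $F$-contractibility of $W$: each $W_{M}=M(W'_{M})$ is weak-$*$ compact (Banach--Alaoglu), so $W$ is compact (Tychonoff), and $W$ is a convex subset of $\prod_{M}C(W'_{M})^{*}$ on which $\Gamma$ acts by affine homeomorphisms --- permuting the factors via conjugation and acting affinely in each factor. Given a finite subgroup $F\le\Gamma$, I would pick any $v\in W$, put $w_{0}:=\tfrac{1}{|F|}\sum_{f\in F}f\cdot v\in W$ (an $F$-fixed point by affineness), and take the straight-line homotopy $H(w,t)=(1-t)w+t\,w_{0}$. This is continuous, lands in $W$ by convexity, satisfies $H(\cdot,0)=\mathrm{id}_{W}$, $H(\cdot,1)\equiv w_{0}$, and is $F$-equivariant because $F$ acts affinely and fixes $w_{0}$; hence $W$ is $F$-equivariantly contractible.

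The main obstacle is the first part, and specifically the step hidden in the definition of $A$: one must check that $A$ is large enough that the cocycle attached to $h_{N}$ genuinely extends to a continuous field of Hilbert spaces over $W'_{N}$ (this is why $A$ is generated by the $f_{n}$ together with their translates), and that the $N$-equivariance, the groupoid composition law, and the conjugation action on the index set $S_{\Lambda}$ are all mutually compatible. This bookkeeping is the content of \cite[Section~4.1]{MR4422220}; once it is in place, the estimates above are routine.
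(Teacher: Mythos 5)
The paper does not prove this proposition at all --- it simply cites \cite[Section 4.1]{MR4422220} --- and your sketch is a correct reconstruction of that argument: the negative-type function $\psi(\mu,n)=\int \widetilde{f_n}\,d\mu$ on $W_N\rtimes N$ pulled back along the $N$-equivariant projection $p_N$, with properness from $f_n(sm)=\|h_N'(mn)-h_N'(m)\|^2\ge\rho_-(|n|)^2$, and the averaging/straight-line homotopy for $F$-contractibility, are exactly the intended mechanisms. The only caveat is the one you already flag: the paper's displayed action $\gamma\cdot(w_N)=(w_{\gamma^{-1}N\gamma})$ records only the permutation of factors, so one must supply (as you do) the affine action within each factor and verify that the right translates of the $f_n$ generate enough of $A$ to make $\widetilde{f_{n_j^{-1}n_i}}(\cdot\,n_j)$ continuous on $W_N'$; this bookkeeping is the substance of the cited section.
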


The equivariant Roe and localization algebras with coefficients $C(W)\hat{\otimes}B$ are defined as in Section \ref{section4}. Tu \cite{Tu-groupoid} proved that for any a-T-menable groupoid $X\rtimes G$, the assembly map is an isomorphism for any $G$-algebra $D$. Combined with Lemma \ref{decompose}, we obtain the following result:
\begin{lemma}\label{isomorphism with W}
For each $d>0$, there is an isomorphism
$$
\mu_{\mathcal{A},W}: K_{*}(C^{*}_{L}(P_{d}(\Gamma),C(W)\hat{\otimes}\mathcal{A}(\mathcal{H})\hat{\otimes}B)^{\Gamma}) \to K_{*}(C^{*}(P_{d}(\Gamma),C(W)\hat{\otimes}\mathcal{A}(\mathcal{H})\hat{\otimes}B)^{\Gamma}).
$$
\end{lemma}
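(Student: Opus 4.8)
The plan is to run, in the $\mathcal{A}(\mathcal{H})$-twisted and $C(W)$-coefficient setting, exactly the reduction scheme used for Theorem~\ref{BCC}, with Tu's theorem for a-T-menable groupoids \cite{Tu-groupoid} playing the role that Higson--Kasparov's theorem for a-T-menable groups played there. First I would reduce the global twisted assembly map $\mu_{\mathcal{A},W}$ to local pieces. Lemma~\ref{decompose} holds verbatim with $C(W)\hat{\otimes}B$ in place of $B$: its proof only uses the $\Gamma$-orbit structure of the cover $\{\Gamma\cdot V_{i}\}_{i=1}^{n}$ of $O_{r}$ and the homeomorphisms $\Gamma\cdot V_{i}\cong\Gamma\times_{N_i}V_{i}$ supplied by Lemma~\ref{cut-lemma}, with the stabilizers $N_i$ commensurable with $\Lambda$ by Corollary~\ref{keycoro}; the extra factor $C(W)$ rides along passively. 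Running the Mayer--Vietoris sequences for this cover on both the localization side and the Roe side, together with the Five Lemma and an induction on the number of pieces (intersections of the $\Gamma\cdot V_i$ being of the same form), reduces the assertion to showing that for each $i$ the local map
\[
\mu_{\mathcal{A},W,N_i}\colon K_*\!\big(C^*_{L}(P_d(N_i),C(W)\hat{\otimes}\mathcal{A}_{V_i}\hat{\otimes}B)^{N_i}\big)\longrightarrow K_*\!\big(C^*(P_d(N_i),C(W)\hat{\otimes}\mathcal{A}_{V_i}\hat{\otimes}B)^{N_i}\big)
\]
is an isomorphism.

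Second I would identify $\mu_{\mathcal{A},W,N_i}$ with a groupoid Baum--Connes assembly map. On the Roe side, Green's Imprimitivity Theorem and the identification of equivariant Roe algebras with crossed products (as in the proof of Theorem~\ref{1st thm}), together with the elementary identity $(C(W)\hat{\otimes}D)\rtimes_r N_i\cong D\rtimes_r (W\rtimes N_i)$ for the $W\rtimes N_i$-algebra $D=\mathcal{A}_{V_i}\hat{\otimes}B$ (with the $C_0(W)$-structure pulled back from a point), turn the target of $\mu_{\mathcal{A},W,N_i}$ into $K_*$ of the reduced groupoid crossed product $D\rtimes_r(W\rtimes N_i)$. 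On the localization side, the direct limit over the Rips parameter of $K_*\!\big(C^*_{L}(P_d(N_i),C(W)\hat{\otimes}\mathcal{A}_{V_i}\hat{\otimes}B)^{N_i}\big)$ computes the topological side $K^{\mathrm{top}}_*(W\rtimes N_i, D)$: this uses that $\varinjlim_d P_d(N_i)$ models $\underline{E}N_i$ and that $W$ is $F$-contractible for every finite $F\le\Gamma$ (the preceding proposition), so that $\underline{E}N_i\times W$ is again a classifying space for proper actions of $W\rtimes N_i$, exactly as in \cite[Section~4.1]{MR4422220}. Under these identifications $\mu_{\mathcal{A},W,N_i}$ becomes the Baum--Connes assembly map for the groupoid $W\rtimes N_i$ with coefficients in $D=\mathcal{A}_{V_i}\hat{\otimes}B$.

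Third, by the preceding proposition the transformation groupoid $W\rtimes N_i$ is a-T-menable, so Tu's theorem \cite{Tu-groupoid} (the Baum--Connes conjecture with coefficients for a-T-menable groupoids) gives that this assembly map is an isomorphism; hence $\mu_{\mathcal{A},W,N_i}$, and therefore $\mu_{\mathcal{A},W}$, is an isomorphism. Moreover, since Tu's proof produces an inverse built from Bott and Dirac asymptotic morphisms attached to the proper affine isometric action of $W\rtimes N_i$ on a continuous field of Hilbert spaces, the inverse is already visible at each fixed Rips level $P_d$ rather than only after passing to the limit in $d$; this is why the statement may be phrased for each $d$.

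I expect the main obstacle to lie in the second step: one must verify that the $\mathcal{A}(\mathcal{H})$-twisting --- which, following Yu, requires the proper-algebra machinery for the $(\mathbb{R}_{+}\times\mathcal{H})$-algebra $\mathcal{A}(\mathcal{H})$ and the support condition $\operatorname{supp}(T_{y,z})\subseteq O_{r}(J(y))$ replacing condition (4) of Definition~\ref{defalgebraictwisted} --- is compatible with the identification of the local twisted Roe (and localization) algebra with a groupoid crossed product, and that at each fixed Rips level $P_d(N_i)$ the localization side matches the corresponding stage of the topological side of the groupoid assembly map. Checking that every step is equivariant and respects the modified support conditions is routine but lengthy, and is the only place where genuine care is needed; the cutting-and-pasting in the first step and the invocation of Tu's theorem in the third are, by contrast, formal consequences of \cite{MR4422220} and \cite{Tu-groupoid}.
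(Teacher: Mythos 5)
Your proposal is correct and follows essentially the same route as the paper: the paper's (very terse) argument is precisely to combine Lemma~\ref{decompose} with Tu's theorem for the a-T-menable groupoids $W\rtimes N_i$, exactly as you do. Your additional care about identifying the local twisted pieces with groupoid assembly maps and about why the statement holds at each fixed Rips level $d$ fills in details the paper leaves implicit, but does not change the method.
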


The Bott map $\beta_{*}$ defined in \eqref{cmmu3} can extends to coefficients with $C(W)$. Parallel to \cite[Proposition 6.4]{MR4422220}, we have:
\begin{lemma}\label{localization iso}
For each $d>0$, the Bott map 
\begin{equation}\label{bott-cw-ah-L}
\beta_{L,*}: K_{*}(C^{*}_{L}(P_{d}(\Gamma),C(W)\hat{\otimes}B)^{\Gamma}\hat{\otimes}\mathcal{S}) \to K_{*}(C^{*}_{L}(P_{d}(\Gamma),C(W)\hat{\otimes}\mathcal{A}(\mathcal{H})\hat{\otimes}B)^{\Gamma})
\end{equation}
is an isomorphism.
\end{lemma}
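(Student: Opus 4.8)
The plan is to prove that $\beta_{L,*}$ is an isomorphism by a Mayer--Vietoris and local-to-global argument, exploiting the fact that the localization algebra is built from the fine structure at scale $t\to\infty$, where propagation tends to $0$. The key point, following \cite[Proposition 6.4]{MR4422220}, is that the localization algebra has an ``$h$-principle'': since $\operatorname{prop}(g(t))\to 0$, the Bott map on the localization algebra can be analyzed one point of $P_d(\Gamma)$ at a time, and at a single point the Bott map becomes (up to Morita equivalence) the Bott isomorphism $K_*(C(W)\hat\otimes B \hat\otimes \mathcal S) \to K_*(C(W)\hat\otimes B\hat\otimes \mathcal A(\mathcal H))$ of Higson--Kasparov--Trout \cite{higson-kasparov-trout-1998}, which is an isomorphism because the Bott--Dirac maps on $\mathcal A(\mathcal H)$ are mutually inverse asymptotic morphisms. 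More precisely, I would first record that both sides are inductive limits over $d$ and over the radius $r$ of the support ball $O_r$, so it suffices to fix $d$ and $r$ and show the map is an isomorphism for the truncated algebras with support in $O_r$.

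Next I would use the finite $\Gamma$-invariant open cover $\{\Gamma\cdot V_i\}_{i=1}^n$ of $O_r$ furnished by Lemma \ref{cut-lemma}, together with the Mayer--Vietoris sequence for localization algebras (the cover is by $\Gamma$-invariant open subsets, and the localization algebra is a coarse-excisive functor, so Mayer--Vietoris applies) and the Five Lemma, to reduce to proving the statement for each piece $\Gamma\cdot V_i \cong \Gamma\times_{N_i}V_i$. By Lemma \ref{decompose} (its localization version $(\iota_{L,i})_*$) and Green's imprimitivity, the localization algebra of the piece over $\Gamma$ is identified with the corresponding localization algebra over $N_i$ with coefficients supported in $V_i$. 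Thus the problem reduces to: for each subgroup $N_i$ commensurable with $\Lambda$, the Bott map
$$
\beta_{L,*}: K_*\bigl(C^*_L(P_d(N_i), C(W)\hat\otimes B)^{N_i}\hat\otimes\mathcal S\bigr)\to K_*\bigl(C^*_L(P_d(N_i), C(W)\hat\otimes\mathcal A(\mathcal H))^{N_i}\hat\otimes B\bigr)
$$
is an isomorphism. Here one invokes the paired Bott--Dirac maps: $\alpha_{L,*}\circ\beta_{L,*}=\mathrm{id}$ by the localization version of Proposition \ref{2st thm} (i.e. Yu's Proposition 7.7 applied at the level of localization algebras), and conversely $\beta_{L,*}\circ\alpha_{L,*}=\mathrm{id}$ because on the localization algebra the composite is homotopic to the identity via a standard rescaling of the Bott--Dirac parameter — this is where the vanishing of propagation in the definition of $C^*_L$ is essential, as it kills the obstruction that prevents $\beta\circ\alpha=\mathrm{id}$ at the level of the non-localized Roe algebras.

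The main obstacle I anticipate is the careful bookkeeping of the \emph{maximal versus intermediate norms} in the cutting-and-pasting step — exactly the subtlety addressed by Lemma \ref{lem: sandwich} and by the $\phi$- and $\psi$-completions in the proof of Theorem \ref{1st thm}. Since $C(W)$ is a proper (hence nuclear-in-effect for the relevant pieces) coefficient algebra and the groupoids $W\rtimes N_i$ are a-T-menable, the intermediate completions collapse to the reduced one after smashing with $\mathcal A(\mathcal H)$ (by Tu's theorem \cite{Tu-groupoid}, as used in Lemma \ref{isomorphism with W}), so the argument goes through; but verifying that the Mayer--Vietoris sequences are compatible across all the completions, and that the restriction maps $\iota_{L,i}$ respect the (possibly non-hereditary) maximal norms, requires the same delicate treatment as in Theorem \ref{1st thm}. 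Once that is in place, combining the local isomorphisms via Five Lemma and passing to the limit over $d$ and $r$ completes the proof.
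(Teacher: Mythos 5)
Your proposal localizes in the wrong variable, and this creates a genuine gap. You propose to truncate the \emph{target} of the Bott map by the support sets $O_r\subseteq\mathbb{R}_+\times\mathcal{H}$ and then run Mayer--Vietoris over the cover $\{\Gamma\cdot V_i\}$ of $O_r$. But this decomposition exists only for the twisted algebra $C^*_L(P_d(\Gamma),C(W)\hat\otimes\mathcal{A}(\mathcal{H})\hat\otimes B)^\Gamma$; the domain $C^*_L(P_d(\Gamma),C(W)\hat\otimes B)^\Gamma\hat\otimes\mathcal{S}$ carries no corresponding filtration by subsets of $\mathbb{R}_+\times\mathcal{H}$, and the Bott map does not restrict to a map between matching pieces (indeed $\beta_t$ spreads support over all of $O_r$ for $r$ growing with $t$). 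So you cannot ``reduce to proving the statement for each piece'': there is no local Bott map to compare. This $O_r$-cutting is exactly the tool the paper uses for the \emph{twisted assembly map} $\mu_{\mathcal{A}}$ (a horizontal map between two twisted algebras, both of which admit the $O_r$-decomposition), not for the vertical Bott map. A second, related problem: even granting the reduction, your endgame requires $\beta_{L,*}\circ\alpha_{L,*}=\mathrm{id}$ for the subgroups $N_i$ commensurable with $\Lambda$. These are typically infinite and do not act properly on $\mathcal{H}$, and $W$ is only $F$-contractible for \emph{finite} $F\le\Gamma$ (for infinite $N_i$ one only knows $W\rtimes N_i$ is a-T-menable); so the Higson--Kasparov--Trout equivariant Bott periodicity you invoke is not available in the form you need. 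Your worry about maximal versus intermediate norms is also beside the point here, since the lemma involves only localization algebras.

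The paper's proof avoids both issues by decomposing the \emph{Rips complex} rather than the coefficient algebra: it inducts on the skeleta of $P_d(\Gamma)$, using Mayer--Vietoris to split the $k$-skeleton into the $(k-1)$-skeleton and the disjoint union of open $k$-simplices. Since both the domain and the target of $\beta_L$ are functorial in subcomplexes of $P_d(\Gamma)$, the Bott map genuinely restricts to each piece. Properness of the $\Gamma$-action makes each simplex stabilizer $\Gamma_\sigma$ \emph{finite}, a strong Lipschitz homotopy collapses each simplex to its barycenter, and the problem reduces to the Bott map over a point with a finite group action, where $W$ is $\Gamma_\sigma$-contractible and the HKT isomorphism applies. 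If you want to salvage your write-up, replace the $O_r$-cover by this skeletal induction; the first paragraph of your proposal (the ``analyze one point at a time'' heuristic) is the right instinct, but the implementation must cut $P_d(\Gamma)$, not $\mathbb{R}_+\times\mathcal{H}$.
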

\begin{proof}[Sketch of proof]
We proceed by induction on the dimension of the simplicial complex $P_{d}(\Gamma)$. Assume that the Bott map $\beta_{L,*}$ is an isomorphism on the $(k-1)$-skeleton of $P_{d}(\Gamma)$. Using the Mayer--Vietoris sequence, we decompose the $k$-skeleton into the ``old'' $(k-1)$-skeleton and the ``new'' part consisting of $k$-simplices. For the old part and its intersection with the new part (which is itself of dimension $k-1$), $\beta_{L,*}$ is an isomorphism by the inductive hypothesis.

For the new part, we consider the disjoint union of the interiors of all $k$-simplices. Since $\Gamma$ acts properly and cocompactly, there are only finitely many $\Gamma$-orbits of simplices. For each $k$-simplex $\sigma$, its stabilizer $\Gamma_{\sigma}$ is a finite subgroup, and its interior is $\Gamma_{\sigma}$-equivariantly strongly Lipschitz homotopy equivalent to its barycenter $x_{\sigma}$. Because the $K$-theory of localization algebras is invariant under such equivalences, the problem reduces to showing that $\beta_{L,*}$ is an isomorphism for orbits of finite subgroups. By the descent principle, this identifies with the map
$$
K_{*}(C^{*}_{L}(\mathrm{pt}, C(W)\hat{\otimes}B)^{\Gamma_\sigma}\hat{\otimes}\mathcal{S}) \to K_{*}(C^{*}_{L}(\mathrm{pt}, C(W)\hat{\otimes}\mathcal{A}(\mathcal{H})\hat{\otimes}B)^{\Gamma_\sigma}).
$$
Since $W$ is $\Gamma_\sigma$-contractible for finite $\Gamma_\sigma \le \Gamma$, this map is an isomorphism by \cite{higson-kasparov-trout-1998}. Finally, the Five Lemma implies that the Bott map is an isomorphism on the entire $k$-skeleton, completing the induction.
\end{proof}

The inclusion of constants $c: \mathbb{C} \to C(W)$ induces a homomorphism $\tilde{c}_{L}: C^{*}_{L}(P_{d}(\Gamma),B)^{\Gamma} \to C^{*}_{L}(P_{d}(\Gamma),C(W)\hat{\otimes}B)^{\Gamma}$. This map also yields a $K$-theoretic isomorphism:
\begin{lemma}\label{C(W) iso}
  For each $d>0$, the map
  $$
  \tilde{c}_{L,*}:K_{*}(C^{*}_{L}(P_{d}(\Gamma),B)^{\Gamma}) \to K_{*}(C^{*}_{L}(P_{d}(\Gamma),C(W)\hat{\otimes}B)^{\Gamma})
  $$
  is an isomorphism.
\end{lemma}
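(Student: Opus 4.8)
The plan is to mimic the induction on skeleta used in the sketch of Lemma~\ref{localization iso}, with the Bott map $\beta_L$ replaced by the coefficient inclusion $\tilde c_L$ and with the role played there by \cite{higson-kasparov-trout-1998} now played by the $F$-contractibility of $W$. Fix $d$ (there is no limit over $d$ to take here), and argue by induction on the dimension of the Rips complex $P_d(\Gamma)$. Assuming $\tilde c_{L,*}$ is an isomorphism on the $(k-1)$-skeleton, decompose the $k$-skeleton $\Gamma$-equivariantly as the union of an open neighbourhood of the $(k-1)$-skeleton and a slight thickening of the disjoint union of the open $k$-simplices, whose intersection $\Gamma$-equivariantly deformation retracts onto a $(k-1)$-dimensional subcomplex. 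The Mayer--Vietoris sequences for the equivariant localization algebras with coefficients in $B$ and in $C(W)\hat{\otimes}B$ are compatible with $\tilde c_L$, so by the inductive hypothesis and the Five Lemma it suffices to treat the ``new'' piece, the disjoint union of the interiors of the $k$-simplices; the base case $k=0$ is the special case of this reduction with trivial stabilisers, since $\Gamma$ acts freely on its set of vertices.

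Since $\Gamma$ acts properly and cocompactly on $P_d(\Gamma)$, there are finitely many $\Gamma$-orbits of $k$-simplices; for each $k$-simplex $\sigma$ the stabiliser $\Gamma_\sigma$ is finite and the open simplex is $\Gamma_\sigma$-equivariantly strongly Lipschitz homotopy equivalent to its barycentre. Invoking the homotopy invariance of the $K$-theory of localization algebras \cite{Yu-1995,Yu-2000} and the descent principle, the problem reduces to proving that for every finite subgroup $F\le\Gamma$ the map
$$
\tilde c_{L,*}\colon K_{*}\bigl(C^{*}_{L}(\mathrm{pt},B)^{F}\bigr)\longrightarrow K_{*}\bigl(C^{*}_{L}(\mathrm{pt},C(W)\hat{\otimes}B)^{F}\bigr)
$$
is an isomorphism. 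This is the step that uses the construction of $W$: by the Proposition quoted above, $W$ is $F$-contractible, and an $F$-equivariant contraction necessarily retracts $W$ onto an $F$-fixed point $w_{0}$ and provides an $F$-equivariant homotopy between $\mathrm{id}_{C(W)}$ and the composite $C(W)\xrightarrow{\ \mathrm{ev}_{w_{0}}\ }\mathbb{C}\xrightarrow{\ c\ }C(W)$. Hence $c\colon\mathbb{C}\to C(W)$, and therefore $c\hat{\otimes}\mathrm{id}_{B}\colon B\to C(W)\hat{\otimes}B$, is an $F$-equivariant homotopy equivalence; since $K_{*}(C^{*}_{L}(\mathrm{pt},-)^{F})$ is functorial and invariant under equivariant homotopies of the coefficient algebra, $\tilde c_{L,*}$ is an isomorphism over a point, which closes the induction.

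The expected obstacles are bookkeeping rather than conceptual: one must set up the Mayer--Vietoris sequence for the equivariant localization algebras (and their restrictions to $\Gamma$-invariant open subsets of $P_d(\Gamma)$) compatibly with $\tilde c_L$, and verify that the $k$-skeleton decomposition together with the simplex-to-barycentre Lipschitz homotopy can be carried out $\Gamma$-equivariantly --- this is identical to the corresponding step in Lemma~\ref{localization iso} and in \cite[Proposition~6.4]{MR4422220}. The only substantive new input is the $F$-contractibility of $W$ for finite $F$ (in particular for $F=\{e\}$); the argument never invokes contractibility with respect to infinite stabilisers, consistently with the fact that, for infinite $\Lambda$, $W$ is engineered so that the groupoids $W\rtimes N$ are a-T-menable rather than so that $W$ itself is $\Gamma$-contractible.
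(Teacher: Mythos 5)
Your proposal is correct and follows essentially the same route as the paper: a Mayer--Vietoris cutting-and-pasting of $P_d(\Gamma)$ (you use skeletal induction as in Lemma~\ref{localization iso}, the paper covers by $F_i$-contractible slices $\Gamma\cdot V_i$, but this is only a cosmetic difference), followed by descent to finite stabilizers and the observation that $F$-contractibility of $W$ makes $c:\mathbb{C}\to C(W)$ an $F$-equivariant homotopy equivalence. Your additional remarks (the fixed point $w_0$, the free action on vertices for the base case, and the fact that only finite stabilizers are ever needed) are accurate and consistent with the paper's argument.
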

\begin{proof}[Sketch of proof]

Cover $P_d(\Gamma)$ by $\{\Gamma\cdot V_i\}$ where $V_i$ is $F_i$-contractible. By Mayer--Vietoris and the descent principle, we reduce to the map $K_{*}(B\rtimes _{r}F_{i}) \to K_{*}((C(W)\hat{\otimes}B)\rtimes _{r}F_{i})$. Since $W$ is $F_{i}$-contractible, $c$ is an $F_{i}$-equivariant homotopy equivalence, which induces the isomorphism.
\end{proof}

\begin{proof}[Proof of Theorem \ref{strongNovikov}]

  To establish the injectivity of the assembly map $\mu_{\Gamma}$, we construct the following commutative diagram. Here, the vertical arrows are induced by the coefficient inclusion $\tilde{c}$ (defined in Lemma \ref{C(W) iso}) and the Bott periodicity maps $\beta$ (defined in \eqref{bott-cw-ah-L}), respectively.
\begin{equation}\label{strong Novikov cmmu}
  \begin{tikzcd}[
  column sep=small,
  every cell/.append style={font=\footnotesize}, 
  every label/.append style={font=\scriptsize} 
  ]
	{\lim\limits_{d\to\infty}K_{*}(C_{L}^{*}(P_{d}(\Gamma),B)^{\Gamma})} && {\lim\limits_{d\to\infty}K_{*}(C^{*}(P_{d}(\Gamma),B)^{\Gamma})} \\
	\\
	{\lim\limits_{d\to\infty}K_{*}(C_{L}^{*}(P_{d}(\Gamma),C(W)\hat{\otimes}B)^{\Gamma})} && {\lim\limits_{d\to\infty}K_{*}(C^{*}(P_{d}(\Gamma),C(W)\hat{\otimes}B)^{\Gamma})} \\
	{\lim\limits_{d\to\infty}K_{*}(C_{L}^{*}(P_{d}(\Gamma),C(W)\hat{\otimes}B)^{\Gamma}\hat{\otimes}\mathcal{S})} && {\lim\limits_{d\to\infty}K_{*}(C^{*}(P_{d}(\Gamma),C(W)\hat{\otimes}B)^{\Gamma}\hat{\otimes}\mathcal{S})} \\
	\\
	{\lim\limits_{d\to\infty}K_{*}(C_{L}^{*}(P_{d}(\Gamma),C(W)\hat{\otimes}\mathcal{A}(H)\hat{\otimes}B)^{\Gamma})} && {\lim\limits_{d\to\infty}K_{*}(C^{*}(P_{d}(\Gamma),C(W)\hat{\otimes}\mathcal{A}(H)\hat{\otimes}B)^{\Gamma})}
	\arrow["{\mu_{\Gamma}}", from=1-1, to=1-3]
	\arrow["\tilde{c}_{L,*}"', from=1-1, to=3-1]
	\arrow[from=1-3, to=3-3]
	\arrow[from=3-1, to=3-3]
	\arrow[from=4-1, to=4-3]
	\arrow["{\beta_{L,*}}"', from=4-1, to=6-1]
	\arrow[from=4-3, to=6-3]
	\arrow["{{\mu_{ \mathcal{A},W}^{\Gamma}}}", from=6-1, to=6-3]
\end{tikzcd}
\end{equation}
By Lemma \ref{isomorphism with W}, the bottom horizontal map $\mu_{\mathcal{A},W}^{\Gamma}$ is an isomorphism. Since the Bott map $\beta_{L,*}$ is an isomorphism (Lemma \ref{localization iso}), the middle horizontal maps are injective. Finally, as the coefficient map $\tilde{c}_{L,*}$ is an isomorphism by Lemma \ref{C(W) iso}, we conclude that the assembly map $\mu_{\Gamma}$ is injective.
\end{proof}

\subsection{Application to the coarse Baum--Connes conjecture}

In \cite[Theorem 5.3]{Clement-2023}, Dell'Aiera showed that $\Gamma$ satisfies the Novikov conjecture if both $\Lambda$ and $X = \Gamma/\Lambda$ admit coarse embeddings into Hilbert spaces. This conclusion follows as a corollary to his main result on the Baum--Connes conjecture \cite[Theorem 4.1]{Clement-2023}, which requires the stronger condition of $\Gamma$-equivariant coarse embeddability for $X$. A similar result can be obtained by applying the coarse fibration techniques developed in \cite{deng-guo-2025}.

We claim that the natural quotient map $\pi: \Gamma \to X = \Gamma/\Lambda$ defines a coarse $\Lambda$-fibration structure on $\Gamma$ over the base space $X$.

First, for any $\gamma_1, \gamma_2 \in \Gamma$, as defined in \eqref{X-metric},
$$
d(\pi(\gamma_1), \pi(\gamma_2))=d(\gamma_1 \Lambda, \gamma_2 \Lambda)  = \inf_{\lambda, \lambda' \in \Lambda} |\lambda \gamma_1^{-1} \gamma_2 \lambda'| \le |\gamma_1^{-1} \gamma_2|.
$$ 
Thus, $\pi$ is bornologous.

Second, for any $x\in X$, the the fiber $\pi^{-1}(x)$ is a left coset $\gamma \Lambda$ and is naturally isometric to $\Lambda$. Thus, all the fibers are uniformly coarsely equivalent to $\Lambda$.

Finally, for any $R>0$ and $x \in X$ with $\pi^{-1}(x)=\gamma \Lambda$, the preimage $\pi^{-1}(B_X(x, R))$ consists of all $\gamma' \in \Gamma$ such that $d(\pi(\gamma'), x) \le R$. Rewrite the equation \eqref{X-metric} as 
$$
d(\gamma_1 \Lambda, \gamma_2 \Lambda)  = \inf_{\lambda, \lambda' \in \Lambda} |\lambda \gamma_1^{-1} \gamma_2 \lambda'|=\inf_{\lambda \in \Lambda} |\gamma_1^{-1} \gamma_2 \lambda|=\inf_{\lambda \in \Lambda}d_{\Gamma}(\gamma_{1},\gamma_{2}\lambda).
$$
This means the set $\pi^{-1}(B_X(x, R))$ is exactly the $R$-neighborhood of the coset $\gamma \Lambda$ in $\Gamma$, and is coarse equivalent to $\Lambda$.

Consequently, the sequence $\Lambda \hookrightarrow \Gamma \overset{\pi}{\rightarrow } X$ is a coarse fibration construction. Moreover, assume that $\Lambda$ and $X$ admit coarse embeddings into Hilbert space. In the terminology of Corollary 5.1 in \cite{deng-guo-2025}, $\Gamma$ admits a CE-by-CE coarse fibration structure. Therefore, the twisted coarse Baum--Connes conjecture with coefficients holds for $\Gamma$. This implies the usual coarse Baum--Connes conjecture, and thus we have the following theorem. 
\begin{theorem}\label{coarseBCC}
    Let $(\Gamma, \Lambda)$ be a Hecke pair of discrete groups. If both $\Lambda$ and $X$ admit coarse embeddings into Hilbert space, then $\Gamma$ satisfies the coarse Baum--Connes conjecture and consequently the Novikov conjecture. \qed
\end{theorem}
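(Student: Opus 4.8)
The plan is to realize $\Gamma$ as the total space of a coarse fibration over $X=\Gamma/\Lambda$ with fibre $\Lambda$, and then to invoke the CE-by-CE permanence theorem of Deng and Guo \cite{deng-guo-2025}. The first step is to verify that the quotient map $\pi\colon\Gamma\to X$ is a coarse $\Lambda$-fibration. Three things have to be checked: that $\pi$ is bornologous, which follows from the estimate $d(\pi(\gamma_1),\pi(\gamma_2))=\inf_{\lambda,\lambda'\in\Lambda}|\lambda\gamma_1^{-1}\gamma_2\lambda'|\le|\gamma_1^{-1}\gamma_2|$ read off from \eqref{X-metric}; that each fibre $\pi^{-1}(x)=\gamma\Lambda$, with the restriction of the word metric, is isometric (via left translation) to $\Lambda$, so that the fibres are uniformly coarsely equivalent to $\Lambda$; and that for each $R>0$ the preimage $\pi^{-1}(B_X(x,R))$ is exactly the $R$-neighbourhood of the coset $\gamma\Lambda$ inside $\Gamma$. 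The last point becomes transparent after rewriting $d(\gamma_1\Lambda,\gamma_2\Lambda)=\inf_{\lambda\in\Lambda}d_\Gamma(\gamma_1,\gamma_2\lambda)$, and such a neighbourhood is again coarsely equivalent to $\Lambda$, uniformly in $x$ by $\Gamma$-invariance of the metrics.

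Next I would feed the hypotheses into this structure. By assumption $\Lambda$ coarsely embeds into a Hilbert space (the fibre) and $X$ coarsely embeds into a Hilbert space (the base); hence, in the terminology of \cite[Corollary 5.1]{deng-guo-2025}, the sequence $\Lambda\hookrightarrow\Gamma\xrightarrow{\pi}X$ is a CE-by-CE coarse fibration. Applying that corollary yields the twisted coarse Baum--Connes conjecture with coefficients for $\Gamma$, where ``twisted'' refers to the twisted Roe algebra construction of \cite{deng-guo-2025} that serves as the coarse analogue of the crossed product.

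The third step is the passage from the twisted statement to the ordinary one, along the same lines as the proof of Theorem \ref{BCC}: the Bott map $\beta_{*}$ and the Dirac map $\alpha_{*}$ sandwich the ordinary coarse assembly map $\mu\colon\varinjlim_d K_{*}(C^{*}_{L}(P_d(\Gamma),B)^{\Gamma})\to\varinjlim_d K_{*}(C^{*}(P_d(\Gamma),B)^{\Gamma})$ between two copies of the twisted assembly map, and $\alpha_{*}\circ\beta_{*}=\mathrm{id}$. Once the twisted assembly map is known to be an isomorphism, a diagram chase forces $\mu$ to be an isomorphism, i.e.\ $\Gamma$ satisfies the coarse Baum--Connes conjecture; and since $\Gamma$ is a discrete group of bounded geometry, the coarse Baum--Connes conjecture for $\Gamma$ implies the Novikov conjecture by the classical descent argument.

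I expect the only substantive ingredient to be the CE-by-CE permanence theorem, which is imported wholesale from \cite{deng-guo-2025}; the rest is bookkeeping. The single point that deserves care is the uniformity in the base point $x$ of the identification of $\pi^{-1}(B_X(x,R))$ with a coset neighbourhood, together with the remark that the quotient metric \eqref{X-metric} is precisely the metric in which the hypothesis on $X$ is to be interpreted. Both of these follow from the $\Gamma$-invariance of the metrics involved, but they should be recorded explicitly rather than left implicit. No deeper difficulty is anticipated.
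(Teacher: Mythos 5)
Your proposal is correct and follows essentially the same route as the paper: verify that $\pi\colon\Gamma\to X=\Gamma/\Lambda$ is a coarse $\Lambda$-fibration (bornologous, fibres uniformly coarsely equivalent to $\Lambda$, preimages of balls are coset neighbourhoods), then invoke the CE-by-CE permanence result of Deng--Guo to conclude. The only cosmetic difference is that you spell out the Dirac-dual-Dirac passage from the twisted to the ordinary coarse assembly map, whereas the paper treats that implication as part of what is imported from \cite{deng-guo-2025}.
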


\section*{Acknowledgements}
The authors would like to express sincere gratitude to Kang Li and Jintao Deng for helpful discussions and suggestions.
Liang Guo is partially supported by the Chinese Postdoctoral Science Foundation (No. 2025M773059) and the Research Start-up Fund of the Shanghai Institute for Mathematics and Interdisciplinary Sciences (SIMIS).
Hang Wang is supported by the grants 23JC1401900, NSFC 12271165 and in part by Science and Technology Commission of Shanghai Municipality (No. 22DZ2229014).

\bibliographystyle{amsalpha}
\bibliography{references}

\vspace{5mm}

\noindent(L.~Guo) Shanghai Institute for Mathematics and Interdisciplinary Sciences (SIMIS), Shanghai, 200433, P.~R.~China.\\ Research Institute of Intelligent Complex Systems, Fudan University, Shanghai, 200433, China\\
Email: liangguo@simis.cn
\vspace{.2cm}

\noindent(H.~Wang) 
Research Center for Operator Algebras, East China Normal University, Shanghai 200062, China \\ 
Email: wanghang@math.ecnu.edu.cn 
\vspace{.2cm}

\noindent(X.~Yao) School of Mathematical Sciences, East China Normal University (ECNU), Shanghai, 200241, China. \\ Research Center for Operator Algebras (RCOA), East China Normal University (ECNU), Shanghai, 200062, China \\
Email: yaoze@whut.edu.cn

\end{document}